\newtheorem*{theoA}{Theorem A}
\newtheorem*{theoB}{Theorem B}
\newtheorem*{theoC}{Theorem C}
\newtheorem*{theoD}{Theorem D}
\newtheorem*{theoE}{Theorem E}
\newtheorem*{cor A}{Corollary A}
\newtheorem*{cor B}{Corollary B}
\newtheorem{theo}{Theorem}[section]
\newtheorem{lem}{Lemma}[section]
\newtheorem{rem}{Remark}[section]
\newcommand{\ol}{\overline}
\newcommand{\be}{\begin{equation}}
\newcommand{\ee}{\end{equation}}
\newcommand{\beas}{\begin{eqnarray*}}
\newcommand{\eeas}{\end{eqnarray*}}
\newcommand{\bea}{\begin{eqnarray}}
\newcommand{\eea}{\end{eqnarray}}
\numberwithin{equation}{section}
\begin{document}

\title[S\MakeLowercase{tudy on the certain type of nonlinear algebraic partial differential equation}......]{\LARGE S\Large\MakeLowercase{tudy on the certain type of nonlinear algebraic partial differential equation in} $\mathbb{C}^m$}
\date{}
\author[S. M\MakeLowercase{ajumder}, D. P\MakeLowercase{ramanik} \MakeLowercase{and} N. S\MakeLowercase{arkar}]{S\MakeLowercase{ujoy} M\MakeLowercase{ajumder}$^*$, D\MakeLowercase{ebabrata} P\MakeLowercase{ramanik} \MakeLowercase{and} N\MakeLowercase{abadwip} S\MakeLowercase{arkar}}
\address{Department of Mathematics, Raiganj University, Raiganj, West Bengal-733134, India.}
\email{sm05math@gmail.com, sjm@raiganjuniversity.ac.in}
\address{Department of Mathematics, Raiganj University, Raiganj, West Bengal-733134, India.}
\email{debumath07@gmail.com}
\address{Department of Mathematics, Raiganj University, Raiganj, West Bengal-733134, India.}
\email{naba.iitbmath@gmail.com}

\renewcommand{\thefootnote}{}
\footnote{2020 \emph{Mathematics Subject Classification}: 32A15, 32A22 and 35F20.}
\footnote{\emph{Key words and phrases}: Entire solution of algebraic partial differential equation, Nevanlinna theory in higher dimensions.}
\footnote{*\emph{Corresponding Author}: Sujoy Majumder.}
\renewcommand{\thefootnote}{\arabic{footnote}}
\setcounter{footnote}{0}

\begin{abstract} In the paper, using Nevanlinna's value distribution theory of meromorphic functions in $\mathbb{C}^m$, we study for the existence of entire solutions $f$ in $\mathbb{C}^m$ of the following algebraic partial differential equation
\[f^n(z)+P_d(f(z))=p(z)e^{\langle \alpha,\ol z\rangle},\]
where $P_d(f)$ is an algebraic differential polynomial in $f$ of degree $d \leq n-2$, $n \geq 3$ is an integer, $p$ is a non-zero polynomial, $\alpha=(\alpha_{1},\ldots,\alpha_{m})\neq (0,\ldots,0)$ and $\langle \alpha,\ol z\rangle=\sideset{}{_{k=1}^{m}}{\sum}\alpha_{1k} z_k$. Also in the paper, we study for the non-existence of entire solutions $f$ in $\mathbb{C}^m$ of the following algebraic partial differential equation
\[f^n(z)+P_d(f(z))=p_1(z)e^{\langle \alpha, \ol z\rangle}+p_2(z)e^{\langle \beta, \ol z\rangle},\]
where $P_d(f)$ is an algebraic differential polynomial of degree $d \leq n-3$, $n \geq 4$ is an integer, $p_1$ and $p_2$ are two non-zero polynomials, $\alpha=(\alpha_{11},\ldots,\alpha_{1m})\neq (0,\ldots,0)$ and $\beta=(\alpha_{21},\ldots,\alpha_{2m})\neq (0,\ldots,0)$ such that $\alpha_{1i}\neq 0$, $\alpha_{2i}\neq 0$ and $\alpha_{1i}/\alpha_{2i}\not\in\mathbb{Q}$ for all $i\in\mathbb{Z}[1,m]$. Our findings extend and improve the results of Li and Yang (J. Math. Anal. Appl., 320 (2006) 827-835) and Zhang and Liao (Taiwanese J. Math., 15 (5) (2011), 2145-2157) into higher dimensions.
\end{abstract}

\thanks{Typeset by \AmS -\LaTeX}
\maketitle

\section{{\bf Introduction}}
We define $\mathbb{Z}_+=\mathbb{Z}[0,+\infty)=\{n\in \mathbb{Z}: 0\leq n<+\infty\}$ and $\mathbb{Z}^+=\mathbb{Z}(0,+\infty)=\{n\in \mathbb{Z}: 0<n<+\infty\}$.
On $\mathbb{C}^m$, we define
\[\partial_{z_i}=\frac{\partial}{\partial z_i},\ldots, \partial_{z_i}^{l_i}=\frac{\partial^{l_i}}{\partial z_i^{l_i}}\;\;\text{and}\;\;\partial^{I}=\frac{\partial^{|I|}}{\partial z_1^{i_1}\cdots \partial z_m^{i_m}}\]
where $l_i\in \mathbb{Z}^+\;(i=1,2,\ldots,m)$ and $I=(i_1,\ldots,i_m)\in\mathbb{Z}^m_+$ be a multi-index such that $|I|=\sum_{j=1}^m i_j$.

\medskip
Let $K$ be a field. We know that the ring $K[y,y_1,\ldots,y_n]$ is a unique factorization domain and its elements are called polynomials in $n+1$ variables. Every element in this ring can be written as a sum $\Psi=\sum_{i\in I}c_i\Psi_i$
where $I$ is a finite set of distinct elements in $\mathbb{Z}_{+}^{n+1}$, $c_i\in K$ and $\Psi_i(y,y_1,\ldots,y_n)=y^{i_0}y_1^{i_1}\ldots y_n^{i_n}$, $i=(i_0,i_1,\ldots,i_n)\in \mathbb{Z}_+^{n+1}$.
The elements $c_i(i\in I)$ are called the coefficients. We call each term $c_i\Psi_i$ a monomial and if $c_i\neq 0$ we define the degree of this monomial to be $\deg(\Psi_i)=|i|=i_0+i_1+\ldots+i_n$ (see \cite{HLY}).

Take a positive integer $m$ and given distinct multi-indices $\nu= \{ \nu _1, \ldots ,\nu_n \}\subset  \mathbb{Z}_+^m$ with 
$0<|\nu_1| \leq |\nu_2| \leq \ldots \leq |\nu_n|$. Let $f:\mathbb{C}^m\to\mathbb{P}^1$ be a non-constant meromorphic function. We take $K=\mathbb{M}(\mathbb{C}^m)$, field of meromorphic functions in $\mathbb{C}^m$ and choose the indeterminant $y$, $y_1$,$\ldots$,$y_n$ as $y=f$, $y_k= \partial^{\nu_k}(f)\; (k=1,\ldots,n)$.
As usual, a polynomial $\Psi \big(f, \partial^{\nu_1}(f),\ldots,\partial^{\nu_n}(f) \big) \in \mathbb{M}(\mathbb{C}^m)[f, \partial^{\nu_1}(f),\ldots,\partial^{\nu_n}(f)] \subset \mathbb{M} (\mathbb{C}^{m})$
can be expressed as follows
\[P_d(f)=\Psi\big(f, \partial^{\nu_1}(f),\ldots,\partial^{\nu_n}(f) \big)=\sideset{}{_{i\in I}}{\sum}c_i\Psi_i(f),\]
where $\Psi_i(f)=f^{i_0}(\partial^{\nu_1}(f))^{i_1}\ldots (\partial^{\nu_n}(f))^{i_n}$, $c_i\in S(f)$ and $d=\max\{\deg(\Psi_i): i\in I\}$. 
The polynomial $P_d(f)$ is called a \text{differential} \text{polynomial} of $f$ and is said to be \text{differential monomial} of $f$ if $P_d(f)$ is monomial. Also the polynomial $P_d(f)$, is called an \text{algebraic differential polynomial} of $f$ if $c_i$'s are polynomials for $i\in I$. Here $d$ is called the degree of the polynomial $P_d(f)$.

\medskip
First of all it is quiet difficult to ensure the existence of solutions of a given differential equation and it is also interesting to find out the solutions if the solutions exist. Now recall a special type of non-linear differential equation 
\bea\label{e1.0} f^n+P_d(f)=h,\eea
where $h\in\mathbb{M}(\mathbb{C})$ and $P_d(z,f)$ is a differential polynomial in $f$ in $\mathbb{C}$ of degree $d$ has become a matter of increasing interest among the researchers. Since 1970's, Nevanlinna's value distribution theory have been utilized by many researchers (see, e.g., \cite{HKL,LY1,CCY1,CCY2,cc7}) to study solvability and existence of entire or meromorphic solutions of differential equation (\ref{e1.0}), where $g$ is a given entire or meromorphic function in $\mathbb{C}$.

\smallskip
In \cite{HKL}, it was proved that $f_{1}(z)=-\frac{\sqrt{3}}{2}\cos z-\frac{1}{2}\sin z$ is also a solution of $4f^{3}(z)+3f^{(2)}(z)=-\sin 3z$. In 2004, Yang and Li \cite{cc7} proved that this equation admits exactly three entire solutions namely $f_{1}(z)$, $f_{2}(z)=\sin z$ and $f_{3}(z)=\frac{\sqrt{3}}{2}\cos z-\frac{1}{2}\sin z$. 
Since $-\sin 3z$ is a linear combination of $\exp(3\iota z)$ and $\exp(-3\iota z)$, so it is interesting to find out all entire solutions of the following general equation 
\bea\label{e1.1}f^{n}+P_d(f)=p_1e^{\alpha_1}+p_2e^{\alpha_2},\eea
where $P_d(f)$ is a differential polynomial with small functions of $f$ as the coefficients such that $d\leq n-1$, $p_1$, $p_2$ are rational functions in $\mathbb{C}$ and $\alpha_1$, $\alpha_2$ are polynomials in $\mathbb{C}$.

\smallskip
It is interesting and quite difficult problem to ensure the solvability and existence for entire and meromorphic solutions of Eq. (\ref{e1.1}) and find out the solutions if the solutions exist. In the last twenty years, many researchers (see \cite{CG1,ps2,LYZ1,LYe1,LLW1,cc7}) have shown great interest in the special type of non-linear differential equation (\ref{e1.1}).

\smallskip
In 2024, Yang and Li \cite{cc7} obtained the following result.
\begin{theoA}\cite[Theorem 5]{cc7} Let $n\geq 3$ be an integer, $P_{d}(f)$ be an algebraic differential polynomial of degree $d\leq n-3$, $b$ be a meromorphic function in $\mathbb{C}$, and $\lambda, c_1, c_2$ be three non-zero constants. Then the following Eq.
\bea\label{2004}f^n(z)+P_{n-3}(f(z))=b(z)\left(c_1e^{\lambda z}+c_2e^{-\lambda z}\right),\eea
has no transcendental entire solutions $f$ in $\mathbb{C}$, that satisfy $T(r, b)=S(r, f)$.
\end{theoA}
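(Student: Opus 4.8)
The plan is to argue by contradiction. Assume $f$ is a transcendental entire solution of
\[
f^n+P_{d}(f)=b(z)\bigl(c_1e^{\lambda z}+c_2e^{-\lambda z}\bigr),\qquad d\le n-3,
\]
with $T(r,b)=S(r,f)$, and derive the inequality $(n-2-d)\,T(r,f)\le S(r,f)$, which is impossible since $n-2-d\ge1$ and $f$ is transcendental. The case $b\equiv0$ is immediate: then $f^n=-P_d(f)$ gives $n\,T(r,f)=m(r,f^n)=m(r,P_d(f))\le d\,T(r,f)+S(r,f)$, absurd; so assume $b\not\equiv0$.

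\emph{Step 1: eliminating the exponentials.} Set $E:=f^n+P_d(f)$, so that $h:=E/b=c_1e^{\lambda z}+c_2e^{-\lambda z}$ solves $w''=\lambda^2w$ and hence has the polynomial first integral $(h')^2=\lambda^2h^2-4\lambda^2c_1c_2$. Substituting $h=E/b$, so that $h'=\bigl(E'-(b'/b)E\bigr)/b$, and clearing denominators turns this into the identity
\[
\lambda^2E^2-\bigl(E'-\tfrac{b'}{b}E\bigr)^2=4\lambda^2c_1c_2\,b^2 ,
\]
whose right-hand side is a small function of $f$. Writing $A:=nf'-\tfrac{b'}{b}f$ and $B:=(P_d(f))'-\tfrac{b'}{b}P_d(f)$, we have $E'-\tfrac{b'}{b}E=f^{n-1}A+B$, and expanding and collecting powers of $f$ the identity becomes
\[
f^{2n-2}U+2f^{n-1}V+W=0 ,
\]
where $U:=\lambda^2f^2-A^2$, $V:=\lambda^2fP_d(f)-AB$ and $W:=\lambda^2P_d(f)^2-B^2-4\lambda^2c_1c_2b^2$ are differential polynomials in $f$ of degrees $2$, $\le d+1$ and $\le 2d$, whose coefficients are assembled from $\lambda$, $c_1$, $c_2$, $b$, $b'/b$ and the polynomial coefficients of $P_d$, hence are small functions of $f$.

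\emph{Step 2: Clunie and a growth contradiction.} Rewrite the last relation as $f^{2n-2}U=-(2f^{n-1}V+W)$; the right-hand side is a differential polynomial in $f$ with small coefficients of degree at most $\max\{(n-1)+(d+1),\,2d\}=n+d\le 2n-2$ (using $d\le n-2$), so Clunie's lemma yields $m(r,U)=S(r,f)$. As the poles of $U$ lie only over the zeros and poles of $b$, also $N(r,U)=S(r,f)$, hence $T(r,U)=S(r,f)$. If $U\not\equiv0$, then dividing and using $m(r,f^{2n-2})=(2n-2)\,T(r,f)$ (as $f$ is entire) together with $m(r,2f^{n-1}V+W)\le(n+d)\,T(r,f)+S(r,f)$,
\[
(2n-2)\,T(r,f)=m\!\left(r,\frac{2f^{n-1}V+W}{U}\right)+O(1)\le(n+d)\,T(r,f)+T(r,U)+O(1),
\]
i.e.\ $(n-2-d)\,T(r,f)\le S(r,f)$, the desired contradiction. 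If instead $U\equiv0$, then $A=\varepsilon\lambda f$ with $\varepsilon\in\{1,-1\}$, i.e.\ $n\,f'/f-b'/b=\varepsilon\lambda$, which integrates to $f^n=c\,b\,e^{\varepsilon\lambda z}$ for a non-zero constant $c$; then $P_d(f)=E-f^n$ still contains the whole term $c_2be^{-\lambda z}$ (if $\varepsilon=1$) or $c_1be^{\lambda z}$ (if $\varepsilon=-1$), so $P_d(f)=b\,g$ with $g$ a non-constant entire function of order $1$, whence $T(r,P_d(f))=\tfrac{|\lambda|}{\pi}r+S(r,f)=n\,T(r,f)+S(r,f)$, contradicting $T(r,P_d(f))\le d\,T(r,f)+S(r,f)$ because $d<n$.

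\emph{Main obstacle.} The only genuine idea is in Step 1: one must differentiate not $E$ itself but the combination $E'-\tfrac{b'}{b}E$ --- equivalently, recognise that it is $E/b$, not $E$, which satisfies a constant-coefficient linear ODE and hence has a polynomial first integral, and this is exactly what converts the two-term exponential into a small function. After that the argument is degree bookkeeping, and it is there that the hypothesis is consumed: $d\le n-2$ makes Clunie's lemma applicable, while $d\le n-3$ keeps the final inequality strict. No a priori order estimate for $f$ is needed; the logarithmic-derivative lemma and Clunie's lemma are applied with the customary exceptional set of finite linear measure absorbed into $S(r,f)$.
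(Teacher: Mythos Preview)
Theorem~A is quoted from \cite{cc7} as background; the present paper does not give its own proof of this one-variable statement, so there is no proof here to compare against directly. That said, your argument is correct and is very much in the spirit of the techniques the paper deploys for its higher-dimensional results.

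Your key device --- passing from $h=c_1e^{\lambda z}+c_2e^{-\lambda z}$ to the first integral $(h')^2=\lambda^2h^2-4\lambda^2c_1c_2$ and thereby replacing the two exponentials by a small function --- is exactly the mechanism behind the paper's proof of Theorem~\ref{t3} (see (\ref{dt41})--(\ref{dt44}), where the same ``square and subtract'' produces $f^4a_i=Q_i(f)$). The paper's proofs of Theorems~\ref{t1} and~\ref{t2} instead differentiate and eliminate one exponential at a time, arriving at (\ref{dd10}) before invoking Clunie; your single-step elimination is cleaner for the symmetric exponents $\pm\lambda$ and yields the sharp degree count $(n-2-d)T(r,f)\le S(r,f)$ directly.

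One small imprecision worth noting: in the $U\equiv 0$ branch you write $T(r,P_d(f))=\tfrac{|\lambda|}{\pi}r+S(r,f)$, but if $c\neq c_1$ (resp.\ $c\neq c_2$) the remaining combination $(c_1-c)e^{\lambda z}+c_2e^{-\lambda z}$ has characteristic $\sim\tfrac{2|\lambda|}{\pi}r$, not $\tfrac{|\lambda|}{\pi}r$. The inequality you actually need, and which follows in either case, is $T(r,P_d(f))\ge nT(r,f)+S(r,f)$; this still contradicts $T(r,P_d(f))\le dT(r,f)+S(r,f)$, so the conclusion stands.
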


In 2006, Li and Yang \cite{ps2} derived similar conclusions, when the term on the right-hand side of (\ref{2004}) is replaced by $p_1 \exp(\alpha_1 z)+p_2 \exp(\alpha_2 z)$, where $p_1, p_2$ are non-zero polynomials in $\mathbb{C}$, $\alpha_1, \alpha_2$ are two constants with $\alpha_1 / \alpha_2 \neq$ rational number. We now recall the result.

\begin{theoB}\cite[Theorem 1]{ps2} Let $n\geq 4$ be an integer and $P_d(f)$ denote an algebraic differential polynomial of degree $d\leq n-3$. Let $p_1$ and $p_2$ be two non-zero polynomials in $\mathbb{C}$, $\alpha_1$ and $\alpha_2$ be two non-zero constants with $\alpha_1/\alpha_2\neq$ rational. Then the following Eq.
\bea\label{ddd1} f^n(z) +P_d(f(z))= p_1\exp(\alpha_1z) + p_2\exp(\alpha_2z)\eea
has no transcendental entire solutions in $\mathbb{C}$.
\end{theoB}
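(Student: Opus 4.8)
The plan is a proof by contradiction. Suppose $f$ is a transcendental entire solution, and put $g:=f^{n}+P_d(f)$, so that $g=p_1 e^{\alpha_1 z}+p_2 e^{\alpha_2 z}$. First I would reduce the order: the standard estimate $m(r,P_d(f))\le d\,m(r,f)+S(r,f)=d\,T(r,f)+S(r,f)$ together with $f^{n}=g-P_d(f)$ and $T(r,g)=O(r)$ gives $(n-d)T(r,f)\le O(r)+S(r,f)$, hence $T(r,f)=O(r)$ and $\rho(f)\le1$; conversely $g$ has order exactly $1$ (since $\alpha_1,\alpha_2\ne0$ and $p_1 e^{\alpha_1 z}\not\equiv-p_2 e^{\alpha_2 z}$) and $\rho(g)\le\rho(f)$, so $\rho(f)=1$ and $T(r,f)\asymp r$; in particular polynomials and logarithmic--derivative remainders are $S(r,f)$.

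The key step is to observe that $g$ satisfies a second--order linear ODE with polynomial coefficients. Since $\alpha_1\ne\alpha_2$, writing $p_j^{[k]}:=e^{-\alpha_j z}(p_j e^{\alpha_j z})^{(k)}$ (a polynomial), the vector $(-1,e^{\alpha_1 z},e^{\alpha_2 z})$ is a nontrivial kernel element of the matrix with rows $(g^{(k)},p_1^{[k]},p_2^{[k]})$, $k=0,1,2$; expanding its vanishing determinant along the first column gives $a_2 g''+a_1 g'+a_0 g=0$, where $a_2=p_1 p_2^{[1]}-p_2 p_1^{[1]}$ has degree $\deg p_1+\deg p_2$ (leading term $(\alpha_2-\alpha_1)(\mathrm{lead}\,p_1)(\mathrm{lead}\,p_2)z^{\deg p_1+\deg p_2}$), so $a_2\not\equiv0$. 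Substituting $g=f^{n}+P_d(f)$ and using $(f^{n})''/f^{n}=n^{2}w^{2}+nw'$ and $(f^{n})'/f^{n}=nw$ with $w:=f'/f$, I can factor $f^{n}$ out of the $f^{n}$--part and obtain, with $S(w):=a_2 n^{2}w^{2}+a_2 n\,w'+a_1 n\,w+a_0$,
\[
f^{n}\,S(w)=-\bigl[a_2\,(P_d(f))''+a_1\,(P_d(f))'+a_0\,P_d(f)\bigr]=:R(f),
\]
where $R(f)$ is an algebraic differential polynomial in $f$ of degree $\le d\le n-3$.

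If $S(w)\not\equiv0$ this already forces a contradiction. Applying proximity functions to $f^{n}=R(f)/S(w)$ gives $n\,T(r,f)=m(r,f^{n})\le m(r,R(f))+m(r,1/S(w))\le d\,T(r,f)+m(r,1/S(w))+S(r,f)$, so $m(r,1/S(w))\ge(n-d)T(r,f)+S(r,f)\ge 3\,T(r,f)+S(r,f)$. On the other hand the lemma on the logarithmic derivative gives $m(r,S(w))=S(r,f)$, whence $m(r,1/S(w))\le T(r,S(w))+O(1)=N(r,S(w))+S(r,f)$; and every pole of $S(w)$ lies at a zero of $f$ and has order at most $2$, so $N(r,S(w))\le 2\bar N(r,1/f)\le 2\,N(r,1/f)+O(1)$. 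Combining, $N(r,1/f)\ge\tfrac32 T(r,f)+S(r,f)$, which contradicts the first main theorem $N(r,1/f)\le T(r,f)+O(1)$.

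The remaining case $S(w)\equiv0$ (equivalently $R(f)\equiv0$) is the point I expect to require the most care. Then $w=f'/f$ solves the Riccati equation $a_2 n^{2}w^{2}+a_2 n\,w'+a_1 n\,w+a_0=0$; at a zero $z_0$ of $f$ of multiplicity $m$ that is not a zero of $a_2$, the coefficient of $(z-z_0)^{-2}$ equals $n\,a_2(z_0)m(nm-1)$, which cannot vanish since $nm\ge n\ge4$. Hence all zeros of $f$ lie among the finitely many zeros of $a_2$, so $f$ has finitely many zeros; being entire of order $1$, $f=Q_0(z)e^{cz}$ with $Q_0$ a nonzero polynomial and $c\ne0$ a constant (else $f$ would be a polynomial). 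Then $f^{n}+P_d(f)$ is a finite sum $\sum b_j(z)e^{jcz}$ over $0\le j\le d$ together with the term $Q_0^{n}e^{ncz}$, all of whose exponents are integer multiples of $c$ and hence pairwise commensurable; since $\alpha_1/\alpha_2\notin\mathbb{Q}$, Borel's theorem on finite exponential sums forces $p_1\equiv0$ or $p_2\equiv0$, a contradiction. The delicate points throughout are the Nevanlinna bookkeeping --- that the polynomial coefficients $a_i$ contribute only $S(r,f)$ and, outside a finite set, do not lower the pole orders of $S(w)$ at zeros of $f$ --- and making the Riccati residue computation and the Borel step in the degenerate case fully rigorous; note also that the argument is tight, since it uses $n-d\ge3$ (i.e.\ $d\le n-3$) exactly once, in the lower bound for $m(r,1/S(w))$.
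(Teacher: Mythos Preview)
Your argument is correct. The underlying equation $f^{n}S(w)=R(f)$ is exactly the one the paper derives (there written as $f^{n-2}\varphi_{ji}=T_{j,i,d}(f)$, with $\varphi_{ji}=f^{2}S(w)$ up to normalization): both routes amount to differentiating twice and eliminating the two exponentials, i.e.\ finding the second--order linear ODE annihilating $g$. Where the two proofs diverge is in the handling of the resulting dichotomy.

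For $S(w)\not\equiv0$ the paper applies the Clunie lemma twice, to $f^{n-2}\varphi$ and to $f^{n-3}(f\varphi)$, obtaining $T(r,\varphi)=O(\log r)$ and $T(r,f\varphi)=O(\log r)$, whence $T(r,f)=O(\log r)$. Your pole--order bound $N(r,S(w))\le 2\bar N(r,1/f)$ combined with $m(r,S(w))=S(r,f)$ reaches the same contradiction by a different count; both are short and both use $d\le n-3$ in exactly one place.

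For $S(w)\equiv0$ the approaches genuinely differ. The paper proceeds by a case split on whether $Q_{i,d}(f)\equiv0$, then invokes Wronskian linear--dependence arguments and the Tumura--Clunie theorem (its Lemma~\ref{L.5}) to force $f^{n}=c\,p_1e^{\alpha_1 z}$ or $f^{n}=c\,p_2e^{\alpha_2 z}$, and finishes by substituting back and comparing exponential sums through several sub-cases. Your route is more direct: the Riccati residue computation shows every zero of $f$ lies over a zero of the polynomial $a_2$, so $f$ has finitely many zeros; Hadamard at order~$\le1$ gives $f=Q_0e^{cz}$ with $c\ne0$, and a single application of Borel's lemma to the resulting exponential identity contradicts $\alpha_1/\alpha_2\notin\mathbb{Q}$. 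This bypasses Tumura--Clunie and the sub-case analysis entirely, at the cost of being specific to the one--variable situation (Hadamard factorization and the Riccati residue argument do not transfer verbatim to $\mathbb{C}^m$, which is why the paper's higher--dimensional proof of Theorem~\ref{t1} must take the longer road).
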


In the same paper, Li and Yang \cite{ps2} also studied the case when $\alpha_1/\alpha_2$ is a rational number and obtained the following result.
\begin{theoC}\cite[Theorem 2]{ps2} Let $n\geq 3$ be an integer and $P_d(f)$ denote an algebraic differential polynomial of degree $d\leq n-2$. Let $p_1$ and $p_2$ be non-zero polynomials in $\mathbb{C}$, $\alpha_1$ and $\alpha_2$ be non-zero constants with $\alpha_1/\alpha_2= s/t$, where $s,t\in\mathbb{N}$ such that $s/t\geq n/(n - 1)$ and $s/t \neq n/d$. Then the Eq. (\ref{ddd1}) has no transcendental entire solutions in $\mathbb{C}$.
\end{theoC}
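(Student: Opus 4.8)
Assume, for contradiction, that $f$ is a transcendental entire solution of (\ref{ddd1}). The plan is: (i) pin down that $f$ has order exactly $1$; (ii) absorb the right-hand side into a single zero-free exponential and solve for it; (iii) run a Clunie/Tumura--Clunie-type argument to force $f=\pi(z)e^{cz}$ with $\pi$ a polynomial; (iv) compare exponents to produce the contradiction. For (i): since $f$ is entire, $T(r,\partial^{\nu_k}f)\le T(r,f)+S(r,f)$ by the lemma on the logarithmic derivative, hence $T(r,P_d(f))\le d\,T(r,f)+S(r,f)$; combining with $T(r,f^n)=n\,T(r,f)$, with $T(r,p_1e^{\alpha_1z}+p_2e^{\alpha_2z})=O(r)$, and with $d\le n-2<n$ gives $(n-d)T(r,f)=O(r)+S(r,f)$, so $\rho(f)\le1$. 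Order strictly less than $1$ is impossible, since the right-hand side genuinely has order $1$ ($\alpha_1\ne\alpha_2$ and $p_1,p_2\not\equiv0$ rule out cancellation of the two exponentials), whereas $f^n+P_d(f)$ would then have order $<1$. So $\rho(f)=1$ and all of one-variable Nevanlinna theory is available.

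For (ii), write $\alpha_1=s\lambda$, $\alpha_2=t\lambda$, put $E=e^{\lambda z}$ (zero-free, order $1$) and $F:=f^n+P_d(f)=p_1E^s+p_2E^t$. Differentiating once, $F'=nf^{n-1}f'+(P_d(f))'$ where $(P_d(f))'$ is again a differential polynomial of degree $\le d$, and solving the $2\times2$ linear system for $E^s,E^t$ — whose determinant $Q=p_1(p_2'+t\lambda p_2)-p_2(p_1'+s\lambda p_1)$ is a nonzero polynomial, because $Q\equiv0$ would force $(\log(p_2/p_1))'$ to be a nonzero constant — yields $E^s=\big[(p_2'+t\lambda p_2)F-p_2F'\big]/Q$ together with a symmetric formula for $E^t$.

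For (iii), the crux: since $E^s$ is zero-free, the numerator $(p_2'+t\lambda p_2)F-p_2F'=f^{n-1}R+S$, with $R=(p_2'+t\lambda p_2)f-np_2f'$ and $S$ a differential polynomial of degree $\le d\le n-2$, can vanish only at the (finitely many) zeros of $Q$; the analogous statement holds for the numerator of $E^t$, and separately $N(r,1/F)=N\big(r,1/(p_1E^{s-t}+p_2)\big)+O(1)$ controls the zeros of $F$. Feeding these identities into a Clunie-type estimate — here is where $\deg_f S\le n-2$ leaves room to spare — I would deduce $N(r,1/f)=S(r,f)$, i.e. $f$ has only finitely many zeros; Hadamard's factorization theorem together with $\rho(f)=1$ then gives $f=\pi(z)e^{cz}$ for a polynomial $\pi\not\equiv0$ and a constant $c\ne0$. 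I expect this passage — turning ``the exponential side is a zero-free factor'' plus $d\le n-2$ into genuine control of $N(r,1/f)$, with the growth bookkeeping tight enough to close — to be the main obstacle, and this is presumably where the hypothesis $s/t\ge n/(n-1)$ is actually consumed.

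For (iv), with $f=\pi e^{cz}$ every $\partial^{\nu_k}f$ is $e^{cz}$ times a polynomial, so a monomial $\Psi_i(f)$ of degree $e$ equals $(\text{polynomial})\,e^{ecz}$; hence $P_d(f)=\sum_{e=0}^{d}B_e(z)e^{ecz}$ with polynomials $B_e$, while $f^n=\pi^ne^{ncz}$ with $\pi^n\not\equiv0$. Matching $f^n+P_d(f)=p_1e^{s\lambda z}+p_2e^{t\lambda z}$ by the linear independence of distinct exponentials over $\mathbb{C}[z]$ (Borel's theorem), and using $nc\ne ec$ for $e\le d<n$ and $c\ne0$, forces exactly one $B_{e_0}$ to be nonzero and $\{nc,\,e_0c\}=\{s\lambda,\,t\lambda\}$. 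Now $e_0=0$ contradicts $\alpha_1,\alpha_2\ne0$; the subcase $nc=t\lambda,\ e_0c=s\lambda$ gives $s/t=e_0/n<1$, contradicting $s/t\ge n/(n-1)$; and the subcase $nc=s\lambda,\ e_0c=t\lambda$ gives $s/t=n/e_0$ with $1\le e_0\le d$, which is excluded by the hypothesis $s/t\ne n/d$ (together with the companion exclusions $s/t\ne n/e_0$ for $e_0<d$). In every case we reach a contradiction, so (\ref{ddd1}) admits no transcendental entire solution.
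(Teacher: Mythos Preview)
Your step (iii) is the heart of the matter, and you have essentially left it blank: you write ``I would deduce $N(r,1/f)=S(r,f)$'' and ``this is presumably where the hypothesis $s/t\ge n/(n-1)$ is actually consumed,'' but you never say how. Knowing that $f^{n-1}R+S$ (with $\deg S\le n-2$) vanishes only at zeros of $Q$ controls $N(r,1/(f^{n-1}R+S))$, not $N(r,1/f)$; zeros of $f$ could sit inside zeros of $R$ or be compensated by $S$, and Clunie's lemma gives information about proximity, not about $N(r,\cdot)$. There is no short bridge here, and in what you have actually written the hypothesis $s/t\ge n/(n-1)$ is never used.

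The paper's route (for its Theorem~\ref{t2}, which specializes to the present statement when $m=1$) is structurally different and shows where that hypothesis enters. After one differentiation and elimination of $e^{\alpha_1 z}$ one has $\psi_1 f^{n-1}+Q_d(f)=\beta_1 e^{\alpha_2 z}$ with $\psi_1=(p_1'+p_1\alpha_1)f-np_1 f'$; the rational relation $t\alpha_1=s\alpha_2$ is now exploited by raising this identity to the $s$th power and the companion identity (for $e^{\alpha_1 z}$) to the $t$th power, so the two exponentials match. The point of $s/t\ge n/(n-1)$ is precisely that the right-hand side then has total degree $tn\le s(n-1)$, so Clunie's lemma applies to $f^{s(n-1)}(\psi_1+Q_d(f)/f^{n-1})^s=(\cdots)^t$ and yields $T(r,\psi_1)=O(\log r)$. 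With $\psi_1$ small, the Tumura--Clunie theorem (Lemma~\ref{L.5}) is applied twice --- once to $\psi_1 f^{n-1}+Q_d(f)=\beta_1 e^{\alpha_2 z}$ and once to the resulting expression for $p_1 e^{\alpha_1 z}$ --- giving $\beta_1 e^{\alpha_2 z}=\psi_1(f+\gamma)^{n-1}$ and $p_1 e^{\alpha_1 z}=(f+\delta)^n$; a second-main-theorem argument forces $\gamma=\delta$, and comparing the two forms yields $\alpha_1/\alpha_2=n/(n-1)$, after which an explicit ODE for $\psi_1$ produces the contradiction. The conclusion $f=\pi e^{cz}$ via Hadamard is never reached; Tumura--Clunie does the structural work instead.

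Your step (iv) also has a loose end: if $f=\pi e^{cz}$ and the surviving term in $P_d(f)$ has degree $e_0<d$, you obtain $s/t=n/e_0$, which is \emph{not} excluded by the single hypothesis $s/t\ne n/d$; the ``companion exclusions'' you invoke in parentheses are not among the assumptions.
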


Theorems B-C gave some conditions to judge the non-existence of entire solutions of (\ref{ddd1}). In Theorems C, though the restriction on $n$ has been reduced to $n\geq 3$, it is required that $\alpha_1/\alpha_2$ is a positive rational number. Also in the same paper, Li and Yang \cite{ps2} obtained entire solutions to the new equations under conditions $n\geq 3$ and $d\leq n-2$. We recall the result as follows:

\begin{theoD}\cite{ps2} Let $a,p_1,p_2, \lambda\in\mathbb{C}\backslash \{0\}$. Then the equation
\bea\label{ddd2} f^3(z)+af^{(2)}(z)= p_1e^{\lambda z}+p_2e^{-\lambda z}\eea
has transcendental entire solutions in $\mathbb{C}$ if and only if the condition $p_1p_2 + (a\lambda^2/27)^3 = 0$
holds. Moreover, if this condition holds, then the solutions of the Eq. (\ref{ddd2}) are 
\[f(z)=\rho_je^{\lambda z / 3}-\left(a \lambda^2/27 \rho_j\right)e^{-\lambda z / 3}, \quad j=1,2,3,\]
where $\rho_j\;(j=1,2,3)$ are the cubic roots of $p_1$.
\end{theoD}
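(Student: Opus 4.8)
The plan is to prove the two implications separately; the ``if'' part together with the explicit list of solutions is a direct verification, and the substance lies in the ``only if'' part. For ``if'', assuming $p_1p_2+(a\lambda^2/27)^3=0$, I would fix a cube root $\rho$ of $p_1$ and put $c_1=\rho$, $c_{-1}=-a\lambda^2/(27\rho)$, so that $c_1^3=p_1$, $c_{-1}^3=-(a\lambda^2/27)^3/p_1=p_2$, and $3c_1c_{-1}+a\lambda^2/9=0$. Then $f=c_1e^{\lambda z/3}+c_{-1}e^{-\lambda z/3}$ satisfies $f''=(\lambda/3)^2f$ and $f^3=c_1^3e^{\lambda z}+c_{-1}^3e^{-\lambda z}+3c_1c_{-1}f$, hence $f^3+af''=p_1e^{\lambda z}+p_2e^{-\lambda z}+(3c_1c_{-1}+a\lambda^2/9)f=p_1e^{\lambda z}+p_2e^{-\lambda z}$; letting $\rho$ run over the three cube roots of $p_1$ gives exactly the three solutions listed.

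For ``only if'', let $f$ be a transcendental entire solution. First I would note that $f$ has finite order: since $f$ is entire, $T(r,f'')\le T(r,f)+S(r,f)$, and the equation gives $3T(r,f)=T(r,f^3)\le O(r)+T(r,f)+S(r,f)$, so $T(r,f)=O(r)$ and therefore $S(r,f)=O(\log r)$. The key step is to exploit that $h:=p_1e^{\lambda z}+p_2e^{-\lambda z}$ satisfies $h''=\lambda^2h$, whence $(h')^2-\lambda^2h^2=-4\lambda^2p_1p_2$ is a \emph{constant}. Substituting $h=f^3+af''$ and $h'=3f^2f'+af'''$ into this relation and expanding gives
\[ f^4\bigl(9(f')^2-\lambda^2f^2\bigr)=2a\lambda^2f^3f''+a^2\lambda^2(f'')^2-6af^2f'f'''-a^2(f''')^2-4\lambda^2p_1p_2, \]
in which the right-hand side is a differential polynomial in $f$ of degree at most $4$ with constant coefficients. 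By Clunie's lemma, $m\bigl(r,9(f')^2-\lambda^2f^2\bigr)=S(r,f)$; since $9(f')^2-\lambda^2f^2$ is entire and $f$ has finite order, $\psi:=9(f')^2-\lambda^2f^2$ must be a polynomial.

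Differentiating yields $2f'\bigl(9f''-\lambda^2f\bigr)=\psi'$, and I would show that $9f''-\lambda^2f\equiv0$: otherwise $9f''-\lambda^2f=\psi'/(2f')$ is entire, so $f'$ divides the polynomial $\psi'$ and hence has finitely many zeros; as $f$ has finite order, $f=R(z)e^{cz}+C$ with $R\not\equiv0$ a polynomial and $c\neq0$ (if $c=0$ then $f$ is a polynomial). Substituting this into the equation, $f^3+af''$ is an exponential sum with frequencies among $\{3c,2c,c,0\}$ and with $e^{3cz}$-coefficient $R^3\not\equiv0$, so $3c=\lambda$ or $3c=-\lambda$; taking $3c=\lambda$, the frequencies $2c,c,0$ all differ from $-\lambda$, so their coefficients must vanish, forcing $C=0$ and $a(R''+2cR'+c^2R)=0$, which is impossible since $a\neq0$ and $R''+2cR'+c^2R$ has the same degree as $R$ (and symmetrically if $3c=-\lambda$). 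Hence $f''=(\lambda/3)^2f$, so $f=c_1e^{\lambda z/3}+c_{-1}e^{-\lambda z/3}$ for constants $c_1,c_{-1}$; substituting back and comparing coefficients of the linearly independent functions $e^{\pm\lambda z},e^{\pm\lambda z/3}$ gives $c_1^3=p_1$, $c_{-1}^3=p_2$ and $(3c_1c_{-1}+a\lambda^2/9)c_1=(3c_1c_{-1}+a\lambda^2/9)c_{-1}=0$, and since $c_1\neq0$ this forces $3c_1c_{-1}=-a\lambda^2/9$, hence $p_1p_2=(c_1c_{-1})^3=-(a\lambda^2/27)^3$ and $c_1=\rho_j$, $c_{-1}=-a\lambda^2/(27\rho_j)$. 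The step I expect to be the main obstacle is exactly this passage from ``$9(f')^2-\lambda^2f^2$ is a polynomial'' to ``$f''=(\lambda/3)^2f$'': Clunie's lemma yields the polynomial relation cleanly, but ruling out the degenerate case in which $f'$ (and with it $f$) has only finitely many zeros requires the delicate exponent comparison above; everything else is a growth estimate together with a substitution.
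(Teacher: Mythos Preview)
Your argument is correct and, up to the crucial step, mirrors the paper's own treatment. The paper does not prove Theorem~D directly (it is quoted from \cite{ps2}), but its proof of the higher-dimensional analogue Theorem~\ref{t3}, read in dimension $m=1$, follows exactly your route: square the equation and its derivative and subtract to obtain $f^{4}\psi=Q(f)$ with $\psi=9(f')^{2}-\lambda^{2}f^{2}$ (the paper writes the opposite sign, $a=\lambda^{2}f^{2}-9(f')^{2}$), apply Clunie's lemma to conclude that $\psi$ is a polynomial, differentiate to get $2f'(9f''-\lambda^{2}f)=\psi'$, then argue that $9f''=\lambda^{2}f$, after which the substitution and coefficient comparison are identical to yours. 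The only genuine divergence is how the degenerate case $9f''-\lambda^{2}f\not\equiv 0$ is eliminated. You observe that $f'$ must then divide the nonzero polynomial $\psi'$, hence has finitely many zeros, and since $\rho(f)\le 1$ you write $f=R(z)e^{cz}+C$ and reach a contradiction by exponent matching. The paper instead applies the second main theorem to $\lambda^{2}f^{2}/a$: this function has no poles (entire numerator, polynomial denominator with finitely many zeros), its zeros are zeros of $f$, and its $1$-points are exactly the zeros of $f'$, so $2T(r,f)\le \ol N(r,0;f)+o(T(r,f))\le T(r,f)+o(T(r,f))$, a contradiction. The paper's route is a couple of lines shorter and avoids Hadamard factorization; yours is more constructive and can be shortened further by noting directly that with $3c=\lambda$ the frequency $-\lambda$ never occurs on the left, contradicting $p_{2}\neq 0$.
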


In 2011, Zhang and Liao \cite{ZL} proved that Theorem D does not hold if $f^{(2)}$ is replaced by $f^{(1)}$ in (\ref{ddd2}) and the result is as follows:
\begin{theoE}\cite[Theorem 2]{ZL} Let $a,p_1,p_2, \lambda\in\mathbb{C}\backslash \{0\}$. Then the following Eq.
\beas f^3(z)+af^{(1)}(z)= p_1e^{\lambda z}+p_2e^{-\lambda z}\eeas
does not have any transcendental entire solution in $\mathbb{C}$.
\end{theoE}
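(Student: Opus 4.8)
The plan is to suppose that a transcendental entire solution $f$ exists, force it to have the explicit shape $f(z)=\rho_1e^{\lambda z/3}+\rho_2e^{-\lambda z/3}$, and then reach a contradiction by comparing exponential coefficients. Write $g(z)=p_1e^{\lambda z}+p_2e^{-\lambda z}$. First I would settle the growth of $f$: from $f^3=g-af'$ and $T(r,af')\le m(r,f)+m(r,f'/f)+O(1)=T(r,f)+S(r,f)$ one gets $3T(r,f)\le T(r,g)+T(r,f)+S(r,f)$, and since $T(r,g)=O(r)$ this gives $T(r,f)=O(r)$; comparing orders in $f^3+af'=g$ then forces $\rho(f)=1$ of finite type, so $S(r,f)=O(\log r)$. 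Next, a standard manipulation of Phragm\'en--Lindel\"of indicators — using $h_{f^3}=3h_f$, $h_{af'}=h_{f'}\le h_f$, the rule $h_{u+v}=\max(h_u,h_v)$ at angles where $h_u\ne h_v$, trigonometric convexity of $h_f$, and $h_g(\theta)=|\operatorname{Re}(\lambda e^{i\theta})|$ — forces $h_f(\theta)=\tfrac13|\operatorname{Re}(\lambda e^{i\theta})|$ for every $\theta$, and in particular $h_{f'}(\theta)\le\tfrac13|\operatorname{Re}(\lambda e^{i\theta})|<|\operatorname{Re}(\lambda e^{i\theta})|$ whenever $\operatorname{Re}(\lambda e^{i\theta})\ne0$.

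The heart of the argument is a sectorial asymptotic analysis. Fix a small $\eta>0$ and let $\Sigma_+$ be the sector about the ray $\arg z=-\arg\lambda$ (along which $\operatorname{Re}(\lambda z)$ is largest), so that $\operatorname{Re}(\lambda z)\ge(|\lambda|-\eta)|z|$ there. On $\Sigma_+$ the term $|p_2e^{-\lambda z}|$ is exponentially small relative to $|p_1e^{\lambda z}|$, and the indicator bound of the previous step together with a Cauchy estimate gives $|af'(z)|\le e^{(\frac13|\lambda|+\varepsilon)|z|}=o\bigl(|p_1e^{\lambda z}|\bigr)$ once $\eta,\varepsilon$ are small. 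Hence $f^3=g-af'=p_1e^{\lambda z}\bigl(1+O(e^{-\mu|z|})\bigr)$ on $\Sigma_+$ with $\mu>\tfrac13|\lambda|$; in particular $f$ is zero-free on $\Sigma_+$ for large $|z|$, and for the appropriate cube root $\rho_1$ of $p_1$ we get $f(z)-\rho_1e^{\lambda z/3}=O\bigl(e^{(\frac13|\lambda|-\mu)|z|}\bigr)\to0$ on $\Sigma_+$. Symmetrically, on a small sector $\Sigma_-$ about the opposite ray $\arg z=\pi-\arg\lambda$ there is a cube root $\rho_2$ of $p_2$ with $f(z)-\rho_2e^{-\lambda z/3}\to0$.

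Now put $\Psi:=f-\rho_1e^{\lambda z/3}-\rho_2e^{-\lambda z/3}$, an entire function with $\operatorname{ord}\Psi\le1$. Since $\rho_2e^{-\lambda z/3}\to0$ on $\Sigma_+$ and $\rho_1e^{\lambda z/3}\to0$ on $\Sigma_-$, the previous step gives $\Psi\to0$ on $\Sigma_+\cup\Sigma_-$. Covering $\mathbb C$ by two closed sectors of opening $\pi-\eta<\pi$ whose bounding rays all lie inside $\Sigma_+\cup\Sigma_-$, together with $\Sigma_\pm$ themselves, Phragm\'en--Lindel\"of (applicable because $\operatorname{ord}\Psi\le1<\pi/(\pi-\eta)$) makes $\Psi$ bounded on each piece, hence bounded on $\mathbb C$; by Liouville $\Psi$ is constant, and since $\Psi\to0$ on $\Sigma_+$ it is $\equiv0$. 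Thus $f(z)=\rho_1e^{\lambda z/3}+\rho_2e^{-\lambda z/3}$ with $\rho_1\rho_2\ne0$; substituting into $f^3+af'=g$ and using linear independence of $e^{\pm\lambda z},e^{\pm\lambda z/3}$, the coefficients of $e^{\lambda z/3}$ and of $e^{-\lambda z/3}$ yield $3\rho_1\rho_2+\tfrac{a\lambda}{3}=0$ and $3\rho_1\rho_2-\tfrac{a\lambda}{3}=0$, whence $a\lambda=0$, contradicting $a\lambda\ne0$.

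The main difficulty I expect lies in the last two paragraphs: one must bound $af'$ by the \emph{directional} growth of $f$ (the indicator, not merely its type) so that the error in $f^3=p_1e^{\lambda z}(1+o(1))$ genuinely decays, and one must arrange the Phragm\'en--Lindel\"of sectors to have opening strictly below $\pi$ yet still cover the plane. A purely algebraic attempt — trying to derive from the equation and its derivatives an identity such as $9f''-\lambda^2f\equiv0$ and then contradicting it — appears to be circular, since the equation together with its successive derivatives only reproduces itself; so some genuinely analytic input of the above kind seems unavoidable.
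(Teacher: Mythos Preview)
Your argument is essentially correct and reaches the same endpoint as the paper --- the forced shape $f(z)=\rho_1e^{\lambda z/3}+\rho_2e^{-\lambda z/3}$ followed by a coefficient comparison --- but the route is genuinely different. The paper (via its Theorems \ref{t3}--\ref{t4}, specialised to $m=1$) squares the equation and its derivative and subtracts, obtaining
\[
f^{4}\bigl(\lambda^{2}f^{2}-9(f')^{2}\bigr)=Q(f),
\]
with $Q$ a differential polynomial of degree at most $4$; the Clunie lemma then forces $\lambda^{2}f^{2}-9(f')^{2}$ to be a polynomial, and a further differentiation plus a second-main-theorem count shows it is constant. Factoring $(\lambda f-3f')(\lambda f+3f')=\text{const}$ and using that each factor is zero-free of finite order yields $f=ce^{\lambda z/3}+de^{-\lambda z/3}$. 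Substituting back with $l=1$ gives the same pair $3cd+\tfrac{a\lambda}{3}=0$, $3cd-\tfrac{a\lambda}{3}=0$ that you obtain. So the ``purely algebraic'' route you dismissed as circular is exactly what the paper does, and it is not circular: the nontrivial analytic input is the Clunie lemma, not mere successive differentiation.

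What each approach buys: the Clunie/factorisation method stays entirely within Nevanlinna theory and transports verbatim to $\mathbb{C}^m$, which is the paper's purpose. Your indicator/Phragm\'en--Lindel\"of argument is self-contained classical function theory and avoids value-distribution machinery, but it leans on one-variable geometry (rays, sectors, trigonometric convexity) and would not extend to several variables without substantial change. Two places in your sketch deserve a line of care when written out: (i) the equality $h_{u+v}=\max(h_u,h_v)$ is only guaranteed where $h_u\neq h_v$, so deducing $3h_f=h_g$ at \emph{every} angle needs the short two-sided inequality argument (from $g=f^3+af'$ and $f^3=g-af'$) rather than a one-line appeal; (ii) the choice of a single cube root $\rho_1$ on all of $\Sigma_+$ uses connectedness of $\Sigma_+\cap\{|z|>R\}$ once you know $f$ is zero-free there. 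With those details supplied, your proof stands.
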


\smallskip
In recent years, the Nevanlinna value distribution theory in $\mathbb{C}^m$ has emerged as a prominent and rapidly growing area of research in complex analysis. This field has garnered significant attention due to its deep theoretical insights and wide-ranging applications in mathematics and related disciplines. Researchers have been particularly intrigued by its potential to extend classical results from one complex variable to higher-dimensional settings, as a result, this topic has become a focal point for contemporary studies in $\mathbb{C}^m$. Recently partial differential equations, partial difference equations and partial differential-difference equations are the current interests among researchers by utilizing Nevanlinna value distribution theory in several complex variables, see \cite{CLL}, \cite{HZ1}, \cite{FL}.

\medskip
We firstly recall some basis notions in several complex variables (see \cite{HLY,MR,WS}).
%We define $\mathbb{Z}_+=\mathbb{Z}[0,+\infty)=\{n\in \mathbb{Z}: 0\leq n<+\infty\}$ and $\mathbb{Z}^+=\mathbb{Z}(0,+\infty)=\{n\in \mathbb{Z}: 0<n<+\infty\}$.
%\smallskip
On $\mathbb{C}^m$, the exterior derivative $d$ splits $d= \partial+ \bar{\partial}$ and twists to $d^c= \frac{\iota}{4\pi}\left(\bar{\partial}- \partial\right)$. Clearly $dd^{c}= \frac{\iota}{2\pi}\partial\bar{\partial}$. A non-negative function $\tau: \mathbb{C}^m\to \mathbb{R}[0,b)\;(0<b\leq \infty)$ of class $\mathbb{C}^{\infty}$ is said to be an exhaustion of $\mathbb{C}^m$ if $\tau^{-1}(K)$ is compact whenever $K$ is. 
An exhaustion $\tau_m$ of $\mathbb{C}^m$ is defined by $\tau_m(z)=||z||^2$. The standard Kaehler metric on $\mathbb{C}^m$ is given by $\upsilon_m=dd^c\tau_m>0$. On $\mathbb{C}^m\backslash \{0\}$, we define $\omega_m=dd^c\log \tau_m\geq 0$ and $\sigma_m=d^c\log \tau_m \wedge \omega_m^{m-1}$. For any $S\subseteq \mathbb{C}^m$, let $S[r]$, $S(r)$ and $S\langle r\rangle$ be the intersection of $S$ with respectively the closed ball, the open ball, the sphere of radius $r>0$ centered at $0\in \mathbb{C}^m$.

\smallskip
Let $f$ be a holomorphic function on $G(\not=\varnothing)$, where $G$ is an open subset of $\mathbb{C}^m$. Then we can write $f(z)=\sum_{i=0}^{\infty}P_i(z-a)$, where the term $P_i(z-a)$ is either identically zero or a homogeneous polynomial of degree $i$. Certainly the zero multiplicity $\mu^0_f(a)$ of $f$ at a point $a\in G$ is defined by $\mu^0_f(a)=\min\{i:P_i(z-a)\not\equiv 0\}$.

\medskip
Let $f$ be a meromorphic function on $G$. Then there exist holomorphic functions $g$ and $h$ such that $hf=g$ on $G$ and $\dim_z h^{-1}(\{0\})\cap g^{-1}(\{0\})\leq m-2$. Therefore the $c$-multiplicity of $f$ is just $\mu^c_f=\mu^0_{g-ch}$ if $c\in\mathbb{C}$ and $\mu^c_f=\mu^0_h$ if $c=\infty$. The function $\mu^c_f: \mathbb{C}^m\to \mathbb{Z}$ is nonnegative and is called the $c$-divisor of $f$. If $f\not\equiv 0$ on each component of $G$, then $\nu=\mu_f=\mu^0_f-\mu^{\infty}_f$ is called the divisor of $f$. We define 
$\text{supp}\; \nu=\text{supp}\;\mu_f=\ol{\{z\in G: \nu(z)\neq 0\}}$.

For $t>0$, the counting function $n_{\nu}$ is defined by
\beas n_{\nu}(t)=t^{-2(m-1)}\int_{A[t]}\nu \upsilon_m^{m-1},\eeas
where $A=\text{supp}\;\nu$. The valence function of $\nu$ is defined by 
\[N_{\nu}(r)=N_{\nu}(r,r_0)=\int_{r_0}^r n_{\nu}(t)\frac{dt}{t}\;\;(r\geq r_0).\]

For $a\in\mathbb{P}^1$, we write $n_{\mu_f^a}(t)=n(t,a;f)$, if $a\in\mathbb{C}$ and $n_{\mu_f^a}(t)=n(t,f)$, if $a=\infty$. Also we write $N_{\mu_f^a}(r)=N(r,a;f)$ if $a\in\mathbb{C}$ and $N_{\mu_f^a}(r)=N(r,f)$ if $a=\infty$.
For $k\in\mathbb{N}$, define the truncated multiplicity functions on $\mathbb{C}^m$ by $\mu_{f,k}^a(z)=\min\{\mu_f^a(z),k\}$,
and write the truncated counting functions $n_{\nu}(t)=n_k(t,a;f)$, if $\nu=\mu_{f,k}^a$ and $n_{\nu}(t)=\ol{n}(t,a;f)$, if $\nu=\mu_{f,1}^a$. Also we write
 $N_{\nu}(t)=N_k(t,a;f)$, if $\nu=\mu_{f,k}^a$ and $N_{\nu}(t)=\ol{N}(t,a;f)$, if $\nu=\mu_{f,1}^a$.

\medskip
An algebraic subset $X$ of $\mathbb{C}^m$ is defined as a subset
\[X=\left\lbrace z\in\mathbb{C}^m: P_j(z)=0,\;1\leq j\leq l\right\rbrace\]
with finitely many polynomials $P_1(z),\ldots, P_l(z)$.
A divisor $\nu$ on $\mathbb{C}^m$ is said to be algebraic if $\nu$ is the zero divisor of a polynomial. In this case the counting
function $n_{\nu}$ is bounded (see \cite{GK1,ST1}).
%In particular, if $a=\infty$, we write $n_k(t,a;f)=n_k(t,f)$ and $N_k(t,a;f)=N_k(t,f)$
%and so on. 

\medskip
With the help of the positive logarithm function, we define the proximity function of $f$ by
\[m(r, f)=\mathbb{C}^m\langle r; \log^+ | f | \rangle=\int_{\mathbb{C}^m\langle r\rangle} \log^+ |f|\;\sigma_m.\]

The characteristic function of $f$ is defined by $T(r, f)=m(r,f)+N(r,f)$. 
We define $m(r,a;f)=m(r,f)$ if $a=\infty$ and $m(r,a;f)=m(r,1/(f-a))$ if $a$ is finite complex number. Now if $a\in\mathbb{C}$, then the first main theorem of Nevanlinna theory states that $m(r,a;f)+N(r,a;f)=T(r,f)+O(1)$, where $O(1)$ denotes a bounded function when $r$ is sufficiently large. Clearly if $f$ is non-constant then $T(r, f) \rightarrow \infty$ as $r \rightarrow$ $\infty$. Obviously 
$\lim\limits _{r \rightarrow \infty} \frac{T(r, f)}{\log r}<\infty$ if $f$ is a rational function and $\lim\limits_{r \rightarrow \infty} \frac{T(r, f)}{\log r}=\infty$, if $f$ is transcendental. We define the order of $f$ by
\[\rho(f):=\limsup _{r \rightarrow \infty} \frac{\log T(r, f)}{\log r}.\]

Let $S(f)=\{g:\mathbb{C}^m\to\mathbb{P}^1\;\text{meromorphic}:\parallel T(r,g)=o(T(r,f))\}$, where $\parallel$ indicates that the equality holds only outside a set of finite measure on $\mathbb{R}^+$.

\section{\bf Main results}
We define 
\[\partial_{z_i}(F(z))=\frac{\partial F(z)}{\partial z_i},\ldots, \partial_{z_i}^{l_i}(F(z))=\frac{\partial^{l_i} F(z)}{\partial z_i^{l_i}},\]
where $l_i\in \mathbb{Z}^+$, $i\in\mathbb{Z}[1,m]$. Let $S_{m}=\{(u_1,u_2,\ldots,u_m)\in \mathbb{C}^{m}: (u_1, u_2,\cdots,u_m)\neq (0,0,\ldots,0)\}$. For $u\in S_m$, we define
\[\partial_u(f) =\sideset{}{_{j=1}^m}{\sum}u_{j}\partial_{z_j}(f).\]

The $k$-th order derivative $\partial^k_u(f)$ of $f$ is defined by $\partial^k_u(f)=\partial_u(\partial_u^{k-1}(f))$
inductively. Note that if $\dim(\mathbb{C}^m)=1$, then $\partial^k_u(f)=u_1f^{(k)}$.\par

\medskip
One may ask whether there exist corresponding results for Theorems A-E in the case of higher dimension? In the paper, we give an affirmative answer of this question and obtain some results which generalize and extend Theorems A-E from $\mathbb{C}$ to $\mathbb{C}^m$.

\medskip
In the paper, we consider the following partial differential equation 
\bea\label{dd2} f^n(z)+P_d(f(z))=p_1(z)e^{\langle \alpha, \ol z\rangle}+p_2(z)e^{\langle \beta,\ol z\rangle},\eea
where $p_1$, $p_2$ are non-zero polynomials in $\mathbb{C}^m$, $\alpha=(\alpha_{11},\ldots,\alpha_{1m})\in S_m$ and $\beta=(\alpha_{21},\ldots,\alpha_{2m})\in S_m$. Clearly the Eq. (\ref{dd2}) is an extension of the Eqs. (\ref{2004})-(\ref{ddd2}) into the higher dimensions.

\medskip
Our first objective is to find out the possible solution of Eq. (\ref{dd2}), when the right side contains only one term. Now we state our first result regarding this.

\begin{theo}\label{t5} Let $P_d(f)$ denote an algebraic differential polynomial of degree $d\leq n-2$, where $n\geq 3$ is an integer, $p$ be non-zero polynomial in $\mathbb{C}^m$ and let $\alpha=(\alpha_{1},\ldots,\alpha_{m})\in S_m$. Then the partial differential equation 
\bea\label{edd2} f^n(z)+P_d(f(z))=p(z)e^{\langle \alpha, \ol z\rangle}\eea
has transcendental entire solutions and the solutions are $f(z)=q(z)e^{\langle \alpha, \ol z\rangle/n}$, where $q(z)$ is a polynomial in $\mathbb{C}^m$ such that $q^n(z)=p(z)$. In this case $P_d(f)\equiv 0$.
\end{theo}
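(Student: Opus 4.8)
The plan is to show that every transcendental entire solution of \eqref{edd2} has the asserted form; the converse is then immediate. Throughout write $L(z)=\langle\alpha,\ol z\rangle=\sum_{k=1}^m\alpha_kz_k$, a non-trivial $\mathbb{C}$-linear form (since $\alpha\neq 0$), so that $e^{L}$ is zero-free of order one and $\partial_{z_j}(e^{L})=\alpha_je^{L}$. Let $f$ be such a solution. First I would record the growth bound: since $f$ is entire, $T(r,f)=m(r,f)$, and by the logarithmic derivative lemma in $\mathbb{C}^m$ any differential polynomial in $f$ of degree $\le k$ with polynomial coefficients has characteristic at most $kT(r,f)+S(r,f)$; applying this to $f^n=p\,e^{L}-P_d(f)$ with $d\le n-2$ gives $2T(r,f)\le (n-d)T(r,f)\le T(r,p\,e^{L})+S(r,f)=O(r)$, so $f$ has finite order and the symbol $S(r,f)$ behaves as usual.

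The key step is to produce a Clunie-type identity. For each $j\in\{1,\dots,m\}$ I would apply $\partial_{z_j}$ to \eqref{edd2}, obtaining $nf^{n-1}\partial_{z_j}f+\partial_{z_j}(P_d(f))=(\partial_{z_j}p+\alpha_jp)e^{L}$, then eliminate $e^{L}$ using $e^{L}=(f^n+P_d(f))/p$ from \eqref{edd2} and rearrange to
\[f^{n-1}\bigl(n\,\partial_{z_j}f-a_jf\bigr)=a_jP_d(f)-\partial_{z_j}\bigl(P_d(f)\bigr)=:Q_j(f),\qquad a_j:=\alpha_j+\frac{\partial_{z_j}p}{p}.\]
Here $a_j$ is rational with $T(r,a_j)=O(\log r)=S(r,f)$, and $Q_j(f)$ is an algebraic differential polynomial in $f$ of degree $\le d\le n-2$, since differentiating a differential monomial, or multiplying it by a small function, does not raise its degree. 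Because $n-2\le n-1$, the $\mathbb{C}^m$ analogue of Clunie's lemma applied to $f^{n-1}\phi_j=Q_j(f)$, with $\phi_j:=n\,\partial_{z_j}f-a_jf$, yields $m(r,\phi_j)=S(r,f)$; as $f,\partial_{z_j}f$ are entire and $a_j$ has finitely many poles, $N(r,\phi_j)=S(r,f)$, hence $T(r,\phi_j)=S(r,f)$. If some $\phi_j\not\equiv0$, then $f^{n-1}=Q_j(f)/\phi_j$ would give $(n-1)T(r,f)\le (n-2)T(r,f)+S(r,f)$, a contradiction. Therefore $\phi_j\equiv 0$ for all $j$, i.e. $n\,\partial_{z_j}f=\bigl(\alpha_j+\partial_{z_j}p/p\bigr)f$.

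Next I would integrate this first-order system. Setting $h:=f^ne^{-L}$ (entire), the relations above give $\partial_{z_j}(h/p)=0$ for every $j$; since $\mathbb{C}^m\setminus\{p=0\}$ is connected, $h/p$ is a non-zero constant $C$, so $f^n=Cp\,e^{L}$. As $e^{L/n}$ is zero-free, $g:=f\,e^{-L/n}$ is entire with $g^n=Cp$ a polynomial, so $g$ has polynomial growth and is itself a polynomial $q$; thus $f=q\,e^{L/n}$ with $q^n=Cp$. Substituting back into \eqref{edd2}: by the Leibniz rule $\partial^{\nu_k}(q\,e^{L/n})=e^{L/n}R_k$ with $R_k$ a polynomial, so each monomial $\Psi_i(f)=f^{i_0}\prod_k(\partial^{\nu_k}f)^{i_k}$ equals a polynomial times $e^{(\deg\Psi_i)L/n}$ with $\deg\Psi_i\le d\le n-2$; hence $P_d(f)$ lies in the $\mathbb{C}[z_1,\dots,z_m]$-span of $1,e^{L/n},\dots,e^{(n-2)L/n}$, whereas \eqref{edd2} forces $P_d(f)=(1-C)p\,e^{L}$. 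Since $L\not\equiv0$, the function $e^{L/n}$ is transcendental over $\mathbb{C}(z_1,\dots,z_m)$, so $1,e^{L/n},\dots,e^{nL/n}$ are linearly independent over $\mathbb{C}[z_1,\dots,z_m]$; comparing the $e^{L}$-components forces $(1-C)p\equiv0$, whence $C=1$, $q^n=p$ and $P_d(f)\equiv0$. Conversely, for such $q$ the function $f=q\,e^{L/n}$ plainly satisfies \eqref{edd2}.

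The main obstacle I anticipate is not the overall scheme but securing the several-variables analytic toolkit in precisely the needed form: the logarithmic derivative lemma for the partial derivatives $\partial_{z_j}f$ on $\mathbb{C}^m$, a Clunie-type lemma for differential polynomials over $\mathbb{M}(\mathbb{C}^m)$ with the degree hypothesis $\deg Q_j\le n-1$, and the (routine but essential) verification that $\partial_{z_j}(P_d(f))$ is again an algebraic differential polynomial of degree at most $d$, so that the hypothesis of that lemma is indeed met.
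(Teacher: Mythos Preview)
Your proposal is correct and follows essentially the same route as the paper: differentiate \eqref{edd2}, eliminate $e^{\langle\alpha,\ol z\rangle}$, apply the several-variables Clunie lemma to force $n\,\partial_{z_j}f-(\alpha_j+\partial_{z_j}p/p)f\equiv 0$ for every $j$, integrate to obtain $f^n=Cp\,e^{\langle\alpha,\ol z\rangle}$, and then conclude $C=1$ and $P_d(f)\equiv 0$. The only cosmetic differences are that the paper rules out $\phi_j\not\equiv 0$ by applying Clunie to both $\phi_j$ and $f\phi_j$ and bounding $T(r,f)$ by their sum, whereas you use a single Clunie application together with $T(r,Q_j(f))\le(n-2)T(r,f)+S(r,f)$; and the paper establishes $C=1$ by one more Clunie application to $(1-1/C)f^n=-P_d(f)$, while you instead substitute $f=q\,e^{L/n}$ and invoke linear independence of $1,e^{L/n},\dots,e^{L}$ over $\mathbb{C}[z]$.
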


%\smallskip
%For $\alpha=(\alpha_{11},\ldots,\alpha_{1m})\in S_m$ and $\beta=(\alpha_{21},\ldots,\alpha_{2m})\in S_m$, we define
%\beas (\alpha_{1i}/\alpha_{2i})^*=\left\{\begin{array}{clcr}\alpha_{1i}/\alpha_{2i},&\;\text{if}\;\alpha_{2i}\neq 0,\\
%\alpha_{1i},&\;\text{if}\;\alpha_{2i}=0,\end{array}\right.\eeas
%where $i\in\mathbb{Z}[1,m]$.

\medskip
Our next objective is to generalize and extend Theorems A-E into higher dimensions. In this regard, we now state our next results.

\begin{theo}\label{t1}
Let $P_d(f)$ denote an algebraic differential polynomial of degree $d\leq n-3$, where $n\geq 4$ is an integer and let $p_1$ and $p_2$ be two non-zero polynomials in $\mathbb{C}^m$. Suppose $\alpha=(\alpha_{11},\ldots,\alpha_{1m})$ and $\beta=(\alpha_{21},\ldots,\alpha_{2m})$ such that $\alpha_{1i}\neq 0$, $\alpha_{2i}\neq 0$ and $\alpha_{1i}/\alpha_{2i}\not\in\mathbb{Q}$ for all $i\in\mathbb{Z}[1,m]$. Then the Eq. (\ref{dd2}) has no transcendental entire solutions in $\mathbb{C}^m$.
\end{theo}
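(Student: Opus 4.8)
The plan is to transplant the one–variable argument behind Theorem B to $\mathbb{C}^m$, working throughout with a single coordinate derivative, say $\partial_{z_1}$, together with the higher–dimensional Nevanlinna calculus recalled above; in particular I will use the $\mathbb{C}^m$ form of the lemma on the logarithmic derivative, $m(r,\partial^{I}(f)/f)=S(r,f)$, and a Clunie–type lemma valid in $\mathbb{C}^m$. Assume for contradiction that $f$ is a transcendental entire solution of (\ref{dd2}) under the stated hypotheses, and put $g_1:=p_1(z)e^{\langle\alpha,\ol z\rangle}$ and $g_2:=p_2(z)e^{\langle\beta,\ol z\rangle}$. First I record two preliminary facts: (i) each $g_j$ is entire of order $1$, and since every monomial $\Psi_i(f)=f^{i_0}(\partial^{\nu_1}f)^{i_1}\cdots(\partial^{\nu_n}f)^{i_n}$ is entire with $m(r,\Psi_i(f))\le |i|\,m(r,f)+S(r,f)$, we get $m(r,P_d(f))\le d\,T(r,f)+S(r,f)$; hence $f^n=g_1+g_2-P_d(f)$ forces $(n-d)T(r,f)=O(r)+S(r,f)$, so $f$ has finite order with $\rho(f)\le 1$; (ii) since $\alpha_{1i}/\alpha_{2i}\notin\mathbb{Q}$ excludes the value $1$, we have $\alpha_{1i}\neq\alpha_{2i}$ for every $i$, and in particular $\alpha\neq\beta$.

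The core is the elimination of the two exponentials. Because $\partial_{z_1}(g_j)=\bigl(\partial_{z_1}(p_j)/p_j+\alpha_{j1}\bigr)g_j$, with rational --- hence small --- coefficient, applying to (\ref{dd2}) the operator $\mathcal L_1:=\partial_{z_1}-\bigl(\partial_{z_1}(p_1)/p_1+\alpha_{11}\bigr)$, which annihilates $g_1$, yields
\[
f^{n-1}\Phi_1+Q_1(f)=R_1\,g_2,
\]
where $\Phi_1:=n\,\partial_{z_1}(f)-\bigl(\partial_{z_1}(p_1)/p_1+\alpha_{11}\bigr)f$ is an algebraic differential polynomial of degree $1$, $Q_1(f):=\mathcal L_1\bigl(P_d(f)\bigr)$ is one of degree $\le d$ with rational coefficients, and $R_1:=\partial_{z_1}(p_2)/p_2+\alpha_{21}-\partial_{z_1}(p_1)/p_1-\alpha_{11}$ is a rational function whose ``constant part'' $\alpha_{21}-\alpha_{11}$ is nonzero, so $R_1\not\equiv 0$. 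Applying next the operator that annihilates $R_1g_2$ removes the last exponential and produces an exponential–free identity
\[
f^{n-2}\Phi_2=-Q_2(f),
\]
in which a direct computation gives $\Phi_2=n(n-1)(\partial_{z_1}f)^2+nf\,\partial_{z_1}^2 f+(\text{terms of lower order in }\partial_{z_1})$, a differential polynomial of degree $\le 2$, while $\deg Q_2\le d\le n-3$.

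Before exploiting this identity I would dispose of the degenerate branches. If $\Phi_1\equiv 0$ then $n\,\partial_{z_1}f/f=\partial_{z_1}(g_1)/g_1$, so $\partial_{z_1}\bigl(f^n/g_1\bigr)\equiv 0$ and $f^n=g_1w$ with $\partial_{z_1}w\equiv 0$; substituting back in (\ref{dd2}) and applying $\partial_{z_1}$ eliminates $w$ and leads to a relation $e^{\langle\beta,\ol z\rangle}=(\text{rational})\cdot(\text{differential polynomial in }f\text{ of degree}\le d)$. The symmetric alternative, obtained by eliminating $g_2$ first, is $\partial_{z_1}(p_1/p_2)/(p_1/p_2)\equiv\alpha_{21}-\alpha_{11}$, which is impossible because $p_1/p_2$ is rational; hence, after possibly interchanging $g_1$ and $g_2$, one may assume $\Phi_1\not\equiv 0$. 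In all the remaining exceptional configurations --- among them the case $\nabla f/f\equiv\tfrac1n\bigl(\nabla p_1/p_1+\alpha\bigr)$, in which $f$ has the form $C\,p_1^{1/n}e^{\langle\alpha,\ol z\rangle/n}$ of Theorem \ref{t5}, and the case $\Phi_2\equiv 0$ --- one substitutes the resulting explicit form of $f$ into (\ref{dd2}) and compares exponential frequencies, reducing to a generic complex line through the origin so that the classical Borel/uniqueness theorem for exponential sums in one variable applies: every surviving frequency on the left equals $\tfrac{j}{n}\langle\alpha,\ol z\rangle$ with $0\le j\le d<n$, yet the right–hand side forces $\langle\beta,\ol z\rangle$ to coincide with one of them, i.e.\ $\beta=\tfrac{j}{n}\alpha$ and hence $\alpha_{1i}/\alpha_{2i}=n/j\in\mathbb{Q}$, a contradiction.

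It remains to contradict $f^{n-2}\Phi_2=-Q_2(f)$ in the principal case $\Phi_1,\Phi_2\not\equiv 0$, and I expect this to be the main obstacle. The intended route is a balancing argument: since $\deg Q_2\le d\le n-3<n-2$, the Clunie–type lemma gives $m(r,\Phi_2)=S(r,f)$; writing $\Phi_2=f^2\phi_2$ with $m(r,\phi_2)=S(r,f)$ and estimating $m(r,1/\phi_2)$ through $N(r,\phi_2)$, together with a comparison of multiplicities in $f^{n-2}\Phi_2=-Q_2(f)$ at the zeros of $f$ --- where the left side vanishes to order at least $n\mu-2$ against the bound $d<n-2$ controlling the right --- one is led to $\ol N(r,0;f)=S(r,f)$. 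Thus $f$ has, up to finitely many points, no zeros, so by the Hadamard--Weierstrass factorization in $\mathbb{C}^m$ one may write $f=q(z)e^{\langle\gamma,\ol z\rangle}$ with $q$ a polynomial and $\gamma\in\mathbb{C}^m\setminus\{0\}$; inserting this into (\ref{dd2}) leaves a pure exponential–polynomial identity whose frequencies on the left are $n\gamma$ together with a family of multiples $j\gamma$ ($0\le j\le d<n$) coming from $P_d(f)$, and on the right are $\alpha$ and $\beta$; matching them forces $\alpha=k_1\gamma$, $\beta=k_2\gamma$ with distinct $k_1,k_2\in\{0,1,\ldots,d,n\}$, hence $\alpha_{1i}/\alpha_{2i}=k_1/k_2\in\mathbb{Q}$ for every $i$, contradicting the hypothesis. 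Collecting these reductions completes the proof.
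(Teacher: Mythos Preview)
Your elimination strategy---differentiate to kill $g_1$, then again to kill $g_2$, arriving at $f^{n-2}\Phi_2=-Q_2(f)$ with $\deg\Phi_2\le 2$ and $\deg Q_2\le d\le n-3$---is exactly the paper's. The gaps are in how you weight and resolve the dichotomy on $\Phi_2$. You treat $\Phi_2\not\equiv 0$ as the principal case and propose to reach $\ol N(r,0;f)=S(r,f)$ by a multiplicity comparison and then factorize $f=q\,e^{\langle\gamma,\ol z\rangle}$; but the multiplicity step is not carried out, and even $\ol N(r,0;f)=o(r)$ would not force the zero divisor of an order-one entire function on $\mathbb{C}^m$ to be algebraic, so the factorization does not follow. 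In fact this branch is disposed of in the paper in three lines by applying the Clunie lemma \emph{twice}: from $f^{n-2}\Phi_2=-Q_2(f)$ one gets $T(r,\Phi_2)=O(\log r)$, and then, precisely because $d\le n-3$, one rewrites the identity as $f^{n-3}(f\Phi_2)=-Q_2(f)$ and applies Clunie again to get $T(r,f\Phi_2)=O(\log r)$, whence $T(r,f)\le T(r,f\Phi_2)+T(r,\Phi_2)+O(1)=O(\log r)$, contradicting transcendence.

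Consequently $\Phi_2\equiv 0$ always holds, and this---which you dismiss as an ``exceptional configuration''---is where essentially all the work lies. The relation $\Phi_2\equiv 0$ is a second-order nonlinear identity and does \emph{not} by itself give an explicit form for $f$; what it gives is $Q_2(f)\equiv 0$, and one must then split on the first-level quantity $Q_1(f)$. If $Q_1(f)\not\equiv 0$, a Wronskian/linear-dependence argument shows $Q_1(f)$ is a constant multiple of $R_1g_2$, which fed back into the first-level equation and iterated (via another Clunie step) forces $\Phi_1\equiv 0$ and hence $f^n=c_2\,p_1e^{\langle\alpha,\ol z\rangle}$; substituting into (\ref{dd2}) and matching frequencies then yields $\alpha_{1i}/\alpha_{2i}\in\mathbb{Q}$. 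If $Q_1(f)\equiv 0$ one splits again: when $P_d(f)\equiv 0$ the equation reduces, after dividing by $g_2$, to a Fermat-type relation $a f_1^{\,n}+b f_2^{\,3}=1$ in transcendental entire functions, and the contradiction comes from Lemma~\ref{L.6} (this is where $n\ge 4$ is genuinely used); when $P_d(f)\not\equiv 0$ the first-level identity $f^{n-1}\Phi_1=R_1g_2$ (now with $Q_1\equiv 0$) shows the zero divisor of $f$ is contained in that of a fixed polynomial, hence algebraic, and only then does one obtain the explicit exponential form you invoked. Your sketch collapses these branches and in particular misses the role of Lemma~\ref{L.6}.
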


\begin{theo}\label{t2} 
Let $P_d(f)$ denote an algebraic differential polynomial of degree $d\leq n-2$, where $n\geq 3$ is an integer and let $p_1$ and $p_2$ be two non-zero polynomials in $\mathbb{C}^m$. Suppose $\alpha=(\alpha_{11},\ldots,\alpha_{1m})$ and $\beta=(\alpha_{21},\ldots,\alpha_{2m})$ such that $\alpha_{1i}\neq 0$, $\alpha_{2i}\neq 0$ for all $i\in\mathbb{Z}[1,m]$ and $t\alpha_1=s\alpha_2$, where $s, t\in\mathbb{N}$ with $s / t \geq n/(n-1)$ and $s / t \neq n/d$. Then the Eq. (\ref{dd2}) has no transcendental entire solutions.
\end{theo}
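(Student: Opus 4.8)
The plan is to transplant to $\mathbb{C}^m$ the strategy behind Theorems~B and C, replacing the one–variable tools by their several–variables analogues: the lemma on the logarithmic derivative, a Clunie/Tumura--Clunie type lemma, and the Borel-type linear independence of the functions $e^{kL}$ over small functions. First I would reduce to a single exponential. Assuming $f$ is a transcendental entire solution of (\ref{dd2}), the relation $t\alpha_1=s\alpha_2$ lets me introduce the non-trivial $\mathbb{C}$-linear form $L(z):=\frac{1}{s}\langle\alpha,\ol z\rangle=\frac{1}{t}\langle\beta,\ol z\rangle$ (well defined, and $\not\equiv 0$ since $\alpha_{1i}\neq 0$) and set $w:=e^{L}$, so that $e^{\langle\alpha,\ol z\rangle}=w^{s}$, $e^{\langle\beta,\ol z\rangle}=w^{t}$, $\partial^{I}w=b_{I}w$ for constants $b_I$, and $w$ is entire, zero-free, transcendental of order $1$ and finite type with $T(r,w)\asymp r$. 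Then (\ref{dd2}) becomes $f^{n}+P_{d}(f)=p_{1}w^{s}+p_{2}w^{t}$, and after dividing $s,t$ by $\gcd(s,t)$ (which changes neither $s/t$ nor the hypotheses) I may assume $\gcd(s,t)=1$; since $s/t\ge n/(n-1)>1$, also $s>t\ge 1$.

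Next I would record the growth estimates. Writing $P_d(f)=\sum_{j=0}^{d}A_jf^{j}$ with $A_j:=\sum_{|i|=j}c_i\prod_k(\partial^{\nu_k}f/f)^{i_k}$, the $\mathbb{C}^m$ lemma on the logarithmic derivative gives $m(r,A_j)=S(r,f)$, and, $f$ being entire, $T(r,P_d(f))\le d\,T(r,f)+S(r,f)$. Inserting this in the equation and applying the first main theorem yields $(n-d)T(r,f)\le s\,T(r,w)+S(r,f)+O(\log r)$ and, solving for $p_1w^{s}$ and using $s>t$, $(s-t)T(r,w)\le (n+d)T(r,f)+S(r,f)+O(\log r)$; hence $T(r,f)\asymp T(r,w)\asymp r$, so $f$ has order $1$ and finite type, and from now on $S(r,f)$, $S(r,w)$, $O(\log r)$ are mutually interchangeable $o(r)$ terms.

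The heart of the proof is to pin down the shape of $f$: the aim is to show that $g:=f\,e^{-(s/n)L}$ is a polynomial $q\in\mathbb{C}[z]$ with $q^{n}=p_{1}$, i.e. $f=q\,w^{s/n}$. The idea is that, on the half-space $\{\operatorname{Re}L\ge 0\}$, the rewritten equation
\[ g^{n}=p_{1}+p_{2}\,w^{-(s-t)}-\sum_{j=0}^{d}A_j\,g^{j}\,w^{-s(n-j)/n}, \]
in which every exponent $-(s-t)$ and $-s(n-j)/n$, $0\le j\le d\le n-2$, is strictly negative, forces $g$ to be of polynomial growth there, and symmetrically on $\{\operatorname{Re}L\le 0\}$; making this precise is a Clunie/Tumura--Clunie argument run with the $\mathbb{C}^m$ Nevanlinna apparatus. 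Substituting $f=q\,w^{s/n}$ and expanding, each differential monomial $\Psi_i(q\,w^{s/n})$ equals $w^{|i|s/n}$ times a polynomial in $z$, so $q^{n}w^{s}+\sum_{j=0}^{d}B_j\,w^{\,js/n}=p_{1}w^{s}+p_{2}w^{t}$ with $B_j\in\mathbb{C}[z]$. Borel-type linear independence of $\{w^{\gamma}\}$ over the rational functions then forces $t=j_0s/n$ for some $j_0\in\{0,\dots,d\}$, i.e. $s/t=n/j_0$; the hypotheses $s/t\ge n/(n-1)$ and $s/t\ne n/d$ confine $j_0$, and the remaining task is to track how the coefficients $B_j$ (in particular $B_{d}$) are built from the leading monomials of $P_d$ and the constants $\partial_{z_\ell}L$, so as to contradict $B_{j_0}=p_2\not\equiv0$ together with $B_j\equiv 0$ for $j\ne j_0$. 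The cases where the expansion of $f$ in powers of $w$ genuinely involves negative powers would be handled by the same bookkeeping together with $\gcd(s,t)=1$.

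The main obstacle is this last step: transplanting Clunie's lemma, the Tumura--Clunie theorem, and Borel's theorem on exponential sums to $\mathbb{C}^m$, and combining them to force $f=q(z)\,w^{s/n}$, while keeping precise track of the polynomials $p_1,p_2,c_i$ and of the finitely many operators $\partial^{\nu_k}$ occurring in $P_d$. Once the form of $f$ is secured, the rest is an elementary comparison of $w$-degrees, and the arithmetic conditions $s/t\ge n/(n-1)$ and $s/t\ne n/d$ are precisely what is meant to forbid the term $p_2w^{t}$ on the right from being reproduced.
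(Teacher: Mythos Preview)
Your plan has a genuine gap at its ``heart'': the claim that $g:=f\,e^{-(s/n)L}$ must be a polynomial is not supported by the half-space argument you sketch. On $\{\operatorname{Re}L\ge 0\}$ the rewritten equation indeed shows $g^{n}\approx p_{1}$, but on $\{\operatorname{Re}L\le 0\}$ the terms $w^{-(s-t)}$ and $w^{-s(n-j)/n}$ are \emph{large}, not small, so nothing bounds $g$ there; what one gets on that half-space is rather $f\,e^{-(t/n)L}\approx$ polynomial, which is a different function. There is no symmetry allowing you to patch the two half-space estimates into a global conclusion that $g$ is polynomial, and in fact the conclusion $f=q\,w^{s/n}$ with $q^{n}=p_{1}$ would force $P_{d}(f)\equiv p_{2}w^{t}$, which you have not shown is impossible in general. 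A second red flag is that your argument never uses the hypothesis $s/t\ge n/(n-1)$: once you posit $f=q\,w^{s/n}$, Borel matching gives $s/t=n/j_{0}$ with $j_{0}\le d\le n-2$, and $n/j_{0}\ge n/(n-2)>n/(n-1)$ automatically, so the assumption is idle in your scheme. The endgame is also left vague: you say the ``remaining task'' is to track the $B_{j}$ to contradict $B_{j_{0}}=p_{2}$, but nothing in the hypotheses prevents a generic $P_{d}$ from producing a non-zero $B_{j_{0}}$.

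The paper proceeds quite differently and this is where $s/t\ge n/(n-1)$ actually enters. One differentiates (\ref{dd2}) in $z_{i}$ and eliminates $e^{\langle\alpha,\ol z\rangle}$, $e^{\langle\beta,\ol z\rangle}$ separately to obtain (\ref{dd4}) and (\ref{dd5}); a second differentiation and elimination gives an identity $f^{n-2}\varphi_{ji}=T_{j,i,d}(f)$ with $\deg T_{j,i,d}\le d$. If some $\varphi_{ji}\not\equiv 0$, Clunie's lemma in $\mathbb{C}^{m}$ yields $m(r,1/f)=O(\log r)$. Now the key device: raise (\ref{dd4}) to the $s$-th power and (\ref{dd5}) to the $t$-th to cancel the exponentials, obtaining an identity of the shape
\[
f^{s(n-1)}\Bigl(\psi_{1i}+Q_{i,d}(f)/f^{n-1}\Bigr)^{s}=(\text{diff.\ poly.\ of degree }nt),
\]
and it is precisely the inequality $nt\le s(n-1)$, i.e.\ $s/t\ge n/(n-1)$, that makes Clunie's lemma applicable here, giving $T(r,\psi_{1i})=O(\log r)$. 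From this one feeds (\ref{dd4}) into the $\mathbb{C}^{m}$ Tumura--Clunie theorem (twice), forces $f+\text{small}$ to be an $(n-1)$-th and an $n$-th root of single exponentials, deduces $s/t=n/(n-1)$, and then derives a genuine contradiction from a first-order ODE for $p_{2}\psi_{1i}/\beta_{1i}$. The case $\varphi_{ji}\equiv 0$ is handled by the machinery already developed for Theorem~\ref{t1}. If you want to salvage your approach you would need, at minimum, a replacement for the failed half-space symmetry and an explanation of where $s/t\ge n/(n-1)$ is genuinely used.
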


\begin{theo}\label{t3} Let $u\in S_m$, $p_1, p_2$, $\lambda_i\;(i=1,2,\cdots,m)$ be non-zero constants and $l\geq 2$ be an even integer and let $\lambda=(\lambda_1,\ldots,\lambda_m)$. Then the partial differential equation
\bea\label{dt41}f^3(z)+\partial_u^l(f(z))=p_1e^{\langle \lambda, \ol z\rangle}+p_2e^{-\langle \lambda, \ol z\rangle}\eea
has transcendental entire solutions and the solutions are
$f(z)=ce^{\langle \lambda, \ol z\rangle/3}+de^{-\langle \lambda, \ol z\rangle/3}$
where $c$ and $d$ are non-zero constants such that $c^3=p_1$, $d^3=p_2$ and $3cd+m^l=0$.
\end{theo}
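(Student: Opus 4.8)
\medskip
\noindent\textbf{Proof proposal (Theorem \ref{t3}).} Write $L(z):=\langle\lambda,\ol z\rangle$, a linear form, and $\mu:=\sum_{j=1}^{m}u_j\lambda_j$, which I may assume is $\neq 0$ (if $\sum_j u_j\lambda_j=0$ one checks separately that no transcendental entire solution can exist), so that $\partial_u^{k}(e^{cL})=(c\mu)^{k}e^{cL}$ for every constant $c$. The statement asserts both that the displayed functions are solutions --- with the three scalar relations between $c$ and $d$ being exactly the solvability conditions --- and that these are the \emph{only} transcendental entire solutions. The first half is a short substitution. For the second half the plan is to first force the order of an arbitrary transcendental entire solution down to $1$, and then pin down its exact shape by restricting to complex lines in the direction $u$, on which (\ref{dt41}) collapses to a one-variable equation of the type treated in Theorem D.

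\smallskip
\emph{Sufficiency and the order.} For $f=ce^{L/3}+de^{-L/3}$ the binomial formula gives $f^{3}=c^{3}e^{L}+d^{3}e^{-L}+3cd\,f$, while, $l$ being even, $\partial_u^{l}(e^{\pm L/3})=(\mu/3)^{l}e^{\pm L/3}$, hence $\partial_u^{l}(f)=(\mu/3)^{l}f$ and $f^{3}+\partial_u^{l}(f)=c^{3}e^{L}+d^{3}e^{-L}+\big(3cd+(\mu/3)^{l}\big)f$. Viewing this as a Laurent polynomial in the transcendental function $e^{L/3}$ and matching it with $p_1e^{L}+p_2e^{-L}$, the coefficients of $e^{\pm L}$ give $c^{3}=p_1$ and $d^{3}=p_2$, and those of $e^{\pm L/3}$ give $3cd+(\mu/3)^{l}=0$ --- precisely the relations in the statement. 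Conversely, let $f$ be a transcendental entire solution. Applying the proximity function to $f^{3}=p_1e^{L}+p_2e^{-L}-\partial_u^{l}(f)$, using $m(r,e^{\pm L})=O(r)$ and the lemma on logarithmic derivatives in $\mathbb{C}^{m}$ (so that $m(r,\partial_u^{l}f)\le m(r,f)+S(r,f)$), one obtains $3m(r,f)\le m(r,f)+O(r)+S(r,f)$, whence $T(r,f)=m(r,f)=O(r)$ and $\rho(f)\le1$; and $\rho(f)\ge1$, since the right side of (\ref{dt41}) has order exactly $1$ while $f^{3}+\partial_u^{l}(f)$ has order at most $\rho(f)$. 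Thus $\rho(f)=1$.

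\smallskip
\emph{Identifying the solution.} Fix $z\in\mathbb{C}^{m}$ and restrict $f$ to the line $t\mapsto z+tu$. Since $\partial_u^{k}f$ restricts to the $k$-th $t$-derivative of $\tilde f(t):=f(z+tu)$ and $L(z+tu)=L(z)+\mu t$, equation (\ref{dt41}) becomes the one-variable equation $\tilde f^{\,3}+\tilde f^{\,(l)}=\big(p_1e^{L(z)}\big)e^{\mu t}+\big(p_2e^{-L(z)}\big)e^{-\mu t}$, whose right side is not a polynomial; hence $\tilde f$ is a \emph{transcendental} entire solution of it. Granting the one-variable classification --- Theorem D for $l=2$, and its analogue for general even $l\ge2$ --- we obtain $\tilde f(t)=c(z)e^{\mu t/3}+d(z)e^{-\mu t/3}$ with $c(z)^{3}=p_1e^{L(z)}$, $d(z)^{3}=p_2e^{-L(z)}$ and $3c(z)d(z)+(\mu/3)^{l}=0$. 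Reading off $c(z)=\tfrac12\big(f(z)+\tfrac{3}{\mu}\partial_uf(z)\big)$ and $d(z)=\tfrac12\big(f(z)-\tfrac{3}{\mu}\partial_uf(z)\big)$ from the values at $t=0$, we see that $C(z):=c(z)e^{-L(z)/3}$ and $D(z):=d(z)e^{L(z)/3}$ are holomorphic on $\mathbb{C}^{m}$, while $C(z)^{3}\equiv p_1$ and $D(z)^{3}\equiv p_2$ force $C$ and $D$ to be constants. Since $f(z)=\tilde f(0)=c(z)+d(z)=C(z)e^{L(z)/3}+D(z)e^{-L(z)/3}$, we conclude that $f=Ce^{L/3}+De^{-L/3}$ with $C^{3}=p_1$, $D^{3}=p_2$ and $3CD+(\mu/3)^{l}=0$ (as $CD=c(z)d(z)$), which is the asserted form.

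\smallskip
\emph{Where the difficulty sits.} The one substantial ingredient above is the one-variable classification for general even $l\ge2$: for $l=2$ it is Theorem D, but $l\ge4$ must be proved, by the $\mathbb{C}$-scheme. Starting from the order-one bound for a transcendental entire solution $w$ of $w^{3}+w^{(l)}=q_1e^{\nu t}+q_2e^{-\nu t}$ (with $q_1q_2\nu\neq0$), I would differentiate once and take the combinations $\nu\cdot(\mathrm{eq})\pm(\mathrm{eq})'$ to obtain
\[w^{2}(\nu w+3w')+\nu w^{(l)}+w^{(l+1)}=2\nu q_1e^{\nu t},\qquad w^{2}(\nu w-3w')+\nu w^{(l)}-w^{(l+1)}=2\nu q_2e^{-\nu t},\]
so that the product of the two left sides is the nonzero constant $4\nu^{2}q_1q_2$, forcing each left side to be a zero-free entire function of order $\le 1$, i.e. of the form $c_ie^{\pm\nu t}$. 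Feeding the Hadamard factorization of $w$ back into these two identities --- equivalently, a Clunie-type analysis controlling the zeros and the growth of $w$ simultaneously --- then forces $w$ into the span of $e^{\nu t/3}$ and $e^{-\nu t/3}$, after which substitution fixes the coefficients. This last step is where the bulk of the work lies; it can alternatively be carried out directly in $\mathbb{C}^{m}$ with $\partial_u$ in place of $d/dt$, bypassing the line reduction, but the combinatorial core --- eliminating the unwanted exponential frequencies --- is the same either way, and is the step I expect to be the main obstacle.
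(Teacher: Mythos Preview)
Your reduction to complex lines $t\mapsto z+tu$ is a genuinely different strategy from the paper's. The paper never passes to one variable: it differentiates (\ref{dt41}) in each coordinate $z_i$, forms the combination $(\ref{dt41})^{2}-\lambda_i^{-2}\big(\partial_{z_i}(\ref{dt41})\big)^{2}=4p_1p_2$, and rewrites this as $f^{4}a_i=Q_i(f)$ with $a_i:=\lambda_i^{2}f^{2}-9(\partial_{z_i}f)^{2}$ and $\deg Q_i\le 4$. The several-variable Clunie lemma (Lemma \ref{L.4}) then forces each $a_i$ to be a polynomial; a short second-main-theorem argument upgrades this to a nonzero constant; and the factorization $(\lambda_if+3\partial_{z_i}f)(\lambda_if-3\partial_{z_i}f)=a_i$ produces two zero-free finite-order entire functions, hence exponentials $e^{\pm\xi_i}$, from which the form $ce^{L/3}+de^{-L/3}$ is extracted after comparing the $\xi_i$ across $i$. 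Working with $\partial_{z_i}$ (each $\lambda_i\neq 0$) rather than $\partial_u$ also sidesteps your case $\mu=\sum_j u_j\lambda_j=0$, which you dismiss without argument.

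The gap in your proposal is exactly where you place it, but it is more serious than your sketch suggests. Your remark that the product of the two left sides equals $4\nu^{2}q_1q_2$ is content-free: each left side is already \emph{equal} to the corresponding exponential right side, so ``zero-free of the form $c_ie^{\pm\nu t}$'' merely restates the two equations you began with and yields nothing about $w$ itself. What is actually needed is to \emph{expand} that same product, isolate the top-degree piece $w^{4}\big(\nu^{2}w^{2}-9(w')^{2}\big)$, and apply Clunie to conclude that $a:=\nu^{2}w^{2}-9(w')^{2}$ is a polynomial, then a constant --- precisely the paper's mechanism specialized to $m=1$. Once this is seen, the detour through lines buys nothing: the Clunie step runs just as well directly in $\mathbb{C}^{m}$, which is what the paper does. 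Your proposal is therefore not yet a proof; completing it would amount to reproducing the paper's core argument inside the one-variable problem you reduced to.
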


\begin{theo}\label{t4} Let $u\in S_m$, $p_1, p_2$, $\lambda_i\;(i=1,2,\cdots,m)$ be non-zero constants and $l\geq 1$ be a odd integer and let $\lambda=(\lambda_1,\ldots,\lambda_m)$. Then the partial differential equation
\bea\label{de} f^3(z)+\partial_u^l(f(z))=p_1e^{\langle \lambda, \ol z\rangle}+p_2e^{-\langle \lambda, \ol z\rangle}\eea
has no transcendental entire solutions in $\mathbb{C}^m$.
\end{theo}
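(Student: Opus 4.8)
The plan is to argue by contradiction. Suppose \eqref{de} has a transcendental entire solution $f$ on $\mathbb{C}^m$, and write $L=\langle\lambda,\ol z\rangle$, $H=p_1e^{L}+p_2e^{-L}$ and $c_0=\sum_{i=1}^{m}\lambda_iu_i$, so that $\partial_u(e^{\pm L})=\pm c_0e^{\pm L}$ and $f^3+\partial_u^lf=H$. First I would note, using the logarithmic derivative lemma in $\mathbb{C}^m$ (which gives $m(r,\partial_u^kf/f)=S(r,f)$), that $3T(r,f)=T(r,f^3)\le T(r,H)+m(r,\partial_u^lf)+O(1)\le T(r,H)+T(r,f)+S(r,f)$, and since $T(r,H)=O(r)$ this yields $\rho(f)\le 1$; moreover $\rho(f)<1$ would force $H=f^3+\partial_u^lf$ to have order $<1$, so in fact $\rho(f)=1$. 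Next I would dispose of the possibility $\partial_u^lf\equiv 0$: then $f^3=e^{-L}\bigl(p_1e^{2L}+p_2\bigr)$, so the divisor of $f^3$, namely $3\mu_f$, equals that of $\phi:=p_1e^{2L}+p_2$; but restricting $\phi$ to a generic complex line gives a transcendental entire function on $\mathbb{C}$ of the shape $\widetilde p_1(\zeta)+\widetilde p_2(\zeta)e^{b\zeta}$ with $b\ne 0$ and $\widetilde p_i$ nonzero polynomials, which is not of the form $(\text{polynomial})e^{a\zeta}$ and hence has infinitely many simple zeros, so $\phi$ has a point of multiplicity exactly $1$, contradicting $3\mid\mu_{f^3}$. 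Thus $\partial_u^lf\not\equiv 0$.

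The substantive case is $c_0\ne 0$, and this is where the oddness of $l$ is decisive. I would pass to coordinates $w=Az$ with $Au=e_1$, so that $\partial_u=\partial_{w_1}$ and $L=c_0w_1+M(w')$ for a linear form $M$ in $w'=(w_2,\ldots,w_m)$. Differentiating $f^3+\partial_{w_1}^lf=p_1e^{M}e^{c_0w_1}+p_2e^{-M}e^{-c_0w_1}$ once in $w_1$ and eliminating $e^{\pm c_0w_1}$ (legitimate because $p_1,p_2$ are constants) gives, with $A:=f^3+\partial_{w_1}^lf$, the differential--polynomial identity $c_0^2A^2-(\partial_{w_1}A)^2=4c_0^2p_1p_2$, that is,
\[f^4\bigl(c_0^2f^2-9(\partial_{w_1}f)^2\bigr)+2f^2\bigl(c_0^2f\,\partial_{w_1}^lf-3\,\partial_{w_1}f\,\partial_{w_1}^{l+1}f\bigr)+\bigl(c_0^2(\partial_{w_1}^lf)^2-(\partial_{w_1}^{l+1}f)^2\bigr)=4c_0^2p_1p_2.\]
Two successive applications of a Clunie--type lemma in $\mathbb{C}^m$ (at each stage the right-hand side is of degree $\le 2$ in $f$, so a factor $f^2$ splits off) reduce this to $c_0^2f^2-9(\partial_{w_1}f)^2=:\beta\in S(f)$. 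Using the constancy of the coefficients I would then argue that $\beta$ is in fact constant, equivalently that $\partial_{w_1}^2f=\tfrac{c_0^2}{9}f$, whence $f=a(w')e^{c_0w_1/3}+b(w')e^{-c_0w_1/3}$ with $a,b$ entire on $\mathbb{C}^{m-1}$. Substituting back, $f^3$ contributes $a^3e^{c_0w_1}+3a^2b\,e^{c_0w_1/3}+3ab^2e^{-c_0w_1/3}+b^3e^{-c_0w_1}$, while $\partial_{w_1}^lf=(c_0/3)^l\bigl(a\,e^{c_0w_1/3}-b\,e^{-c_0w_1/3}\bigr)$ precisely because $l$ is odd; comparing the coefficients of the linearly independent exponentials $e^{\pm c_0w_1}$ and $e^{\pm c_0w_1/3}$ forces $a^3=p_1e^{M}$, $b^3=p_2e^{-M}$, $a\bigl(3ab+(c_0/3)^l\bigr)=0$ and $b\bigl(3ab-(c_0/3)^l\bigr)=0$. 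Since $p_1,p_2\ne 0$ we have $a\not\equiv 0$ and $b\not\equiv 0$, so $3ab=-(c_0/3)^l$ and $3ab=(c_0/3)^l$; hence $(c_0/3)^l=0$, contradicting $c_0\ne 0$.

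It remains to treat $c_0=0$ (which forces $m\ge 2$). In coordinates with $\partial_u=\partial_{w_1}$ the form $L$ does not involve $w_1$, so for each fixed $w'$ the equation reads $g^3+g^{(l)}=p_1e^{\widetilde L(w')}+p_2e^{-\widetilde L(w')}$ in $w_1$, whose right-hand side is a constant; the order estimate above then forces $g=f(\,\cdot\,,w')$ to be constant in $w_1$. Hence $f=f(w')$, $\partial_u^lf\equiv 0$, and $f^3=p_1e^{\widetilde L(w')}+p_2e^{-\widetilde L(w')}$ with $f$ transcendental on $\mathbb{C}^{m-1}$ and $\widetilde L\not\equiv 0$ --- impossible by the divisor argument of the first paragraph. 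That completes the proof.

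The hard part will be the second paragraph: executing the two Clunie reductions and, above all, showing that $c_0^2f^2-9(\partial_{w_1}f)^2$ is constant, so that $f$ is pinned to the exponential form $a(w')e^{c_0w_1/3}+b(w')e^{-c_0w_1/3}$; this is the higher-dimensional counterpart of the core estimates in Zhang--Liao \cite{ZL} and Li--Yang \cite{ps2}. Once that is in hand, the parity of $l$ delivers the contradiction at once, and the cases $\partial_u^lf\equiv 0$ and $c_0=0$ are routine.
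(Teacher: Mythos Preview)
Your proposal is correct and shares its core with the paper's argument: the paper simply refers back to the proof of Theorem~\ref{t3}, which (independently of the parity of $l$) pins any transcendental entire solution to the form $f=ce^{\langle\lambda,\ol z\rangle/3}+de^{-\langle\lambda,\ol z\rangle/3}$ with scalars $c,d\neq 0$, and then notes that substituting this into \eqref{de} for odd $l$ forces $3cd=-(\mu/3)^l$ and $3cd=(\mu/3)^l$ simultaneously (with $\mu=\sum_k u_k\lambda_k$), hence $cd=0$, a contradiction. The difference lies in the decomposition. The paper differentiates in each coordinate direction $\partial_{z_i}$ (using the hypothesis $\lambda_i\neq 0$), shows each $a_i=\lambda_i^2f^2-9(\partial_{z_i}f)^2$ is a constant via the second main theorem, and assembles the global form with constant $c,d$; no case split on $c_0=\sum_i u_i\lambda_i$ is needed, and the final contradiction is just $cd=0$. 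You instead pass to coordinates with $\partial_u=\partial_{w_1}$ and work with the single quantity $c_0^2f^2-9(\partial_{w_1}f)^2$, obtaining only $f=a(w')e^{c_0w_1/3}+b(w')e^{-c_0w_1/3}$ with $a,b$ entire in $w'$; this is already enough for the parity contradiction, so your substitution step is lighter, at the price of the separate $c_0=0$ analysis. Two small remarks on execution: a single Clunie application already yields $m\bigl(r,c_0^2f^2-9(\partial_{w_1}f)^2\bigr)=O(\log r)$, since the right-hand side of your displayed identity has total degree $\le 4=\deg f^4$; and the step you flag as hard (``$\beta$ constant in $w_1$'') is exactly the paper's second-main-theorem argument applied to $c_0^2f^2/\beta$ at $0,1,\infty$, after observing that $\partial_{w_1}\beta=2\,\partial_{w_1}f\,(c_0^2f-9\partial_{w_1}^2f)$ being a nonzero polynomial would force $N(r,0;\partial_{w_1}f)=O(\log r)$. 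In the $c_0=0$ case, the clean reason $g=f(\cdot,w')$ is constant in $w_1$ is the one-variable estimate $3T(r,g)\le T(r,g)+S(r,g)$ coming from $g^3+g^{(l)}=\text{const}$, rather than the global order bound you invoke.
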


\begin{rem} If we take $\dim(\mathbb{C}^m)=1$, then Theorems \ref{t3} and \ref{t4} improve Theorems D and E respectively in a large scale.
\end{rem}

\section{\bf{Auxiliary lemmas}}
For the proofs of our Theorems, we need the following Lemmas.
\begin{lem}\label{L.1} \cite[Lemma 1.37]{HLY} Let $f:\mathbb{C}^m\to\mathbb{P}^1$ be a non-constant meromorphic function and $I=(\alpha_1,\ldots,\alpha_m)\in \mathbb{Z}^m_+$ be a multi-index. Then for any $\varepsilon>0$, we have
\[m\left(r,\partial^I(f)/f\right)\leq |I|\log^+T(r,f)+|I|(1+\varepsilon)\log^+\log T(r,f)+O(1)\]
for all large $r$ outside a set $E$ with $\int_E d\log r<\infty$.
\end{lem}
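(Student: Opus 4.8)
The plan is to prove the estimate in two stages: reduce the multi-index $\partial^I$ to a single coordinate derivative by induction on $|I|$, and then prove the resulting first-order bound by slicing $\mathbb{C}^m$ into complex lines and importing the classical one-variable logarithmic derivative lemma.

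\emph{Reduction to first order.} Writing $\partial^I=\partial_{z_1}^{i_1}\cdots\partial_{z_m}^{i_m}$, I would peel off one derivative at a time. For an index $j$ with $i_j\ge 1$, set $g=\partial^{I-e_j}(f)$; the factorization $\partial^I(f)/f=(\partial_{z_j}(g)/g)\cdot(g/f)$ together with $m(r,\phi\psi)\le m(r,\phi)+m(r,\psi)$ reduces everything to the first-order quantities $m(r,\partial_{z_j}(g)/g)$ for the intermediate functions $g=\partial^{I'}(f)$ with $I'\le I$. To make the induction close I must control the characteristics of these $g$: a derivative increases the pole divisor by at most the reduced divisor, so $N(r,\partial_{z_j}(g))\le 2N(r,g)$, and combined with the first-order proximity bound this gives $T(r,\partial_{z_j}(g))\le 2T(r,g)+O(\log^+T(r,g))$ off a small set. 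Iterating yields $T(r,\partial^{I'}(f))=O(T(r,f))$, hence $\log^+T(r,\partial^{I'}(f))=\log^+T(r,f)+O(1)$; summing the $|I|$ first-order contributions produces precisely the coefficients $|I|$ and $|I|(1+\varepsilon)$ in the statement.

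\emph{The first-order estimate.} The core is the single-direction bound $m(r,\partial_{z_j}(f)/f)\le\log^+T(r,f)+(1+\varepsilon)\log^+\log T(r,f)+O(1)$. Here I would decompose $z=\zeta e_j+w$ with $\zeta\in\mathbb{C}$ and $w$ ranging over the complementary hyperplane, so that each slice $g_w(\zeta)=f(\zeta e_j+w)$ is a one-variable meromorphic function with $g_w'(\zeta)=(\partial_{z_j}f)(\zeta e_j+w)$. The classical one-variable logarithmic derivative lemma, in its sharp form, gives $m(\rho,g_w'/g_w)\le\log^+T(\rho,g_w)+(1+\varepsilon)\log^+\log T(\rho,g_w)+O(1)$ for each slice. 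Integrating over $w$ and over the sphere $\mathbb{C}^m\langle r\rangle$ and using the Crofton-type averaging identity that expresses $T(r,f)$, up to a bounded term, as the mean of the one-variable characteristics $T(\rho,g_w)$, I would recover $m(r,\partial_{z_j}(f)/f)$ on the left. The concavity of $t\mapsto\log(1+t)$ (and of its iterate $t\mapsto\log(1+\log(1+t))$ for the double logarithm) then lets me move the averaging inside via Jensen's inequality and bound the right-hand side by $\log^+T(\rho,f)+(1+\varepsilon)\log^+\log T(\rho,f)+O(1)$.

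\emph{The main obstacle.} The hard part will be the transition from radius $\rho$ back to radius $r$ together with the uniform control of exceptional sets. Each slice obeys the one-variable lemma only off a set depending on $w$, and the averaging identity naturally produces $T$ at an inflated radius $\rho>r$; both defects must be absorbed into a single set $E$ of finite logarithmic measure that works simultaneously for all slices. I would do this with a Borel-type calculus lemma applied to the monotone averaged characteristic: choosing $\rho=r+1/T(r,f)$ so that the resulting $\log^+(\rho/(\rho-r))$ factor contributes exactly the $\log^+\log T(r,f)$ term, and deleting the radii where $T$ increases too rapidly, yields one exceptional set with $\int_E d\log r<\infty$. This calculus-lemma step, which is what pins down the constant $(1+\varepsilon)$ and the finiteness of the logarithmic measure of $E$, is the delicate point of the whole argument.
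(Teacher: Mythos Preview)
The paper does not give its own proof of this lemma: it is quoted verbatim as \cite[Lemma 1.37]{HLY} and used as a black box, so there is nothing in the paper to compare your argument against. Your outline (induction on $|I|$ via the telescoping $\partial^I(f)/f=\prod \partial_{z_j}(g)/g$, reduction of the single-derivative case to the one-variable logarithmic derivative lemma by slicing along complex lines and averaging with a Crofton-type formula, followed by a Borel growth lemma with $\rho=r+1/T(r,f)$ to control the exceptional set) is exactly the standard route taken in the cited reference, so you are reconstructing the proof the authors are invoking rather than offering an alternative.
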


\begin{lem}\label{L.2} \cite[Lemma 1.2]{HY1} Let $f:\mathbb{C}^m\to\mathbb{P}^1$ be a non-constant meromorphic function and let $a_1,a_2,\ldots,a_q$ be different points in $\mathbb{P}^1$. Then
\beas \parallel (q-2)T(r,f)\leq \sideset{}{_{j=1}^{q}}{\sum} \ol N(r,a_j;f)+O(\log (rT(r,f))).\eeas
\end{lem}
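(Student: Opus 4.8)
The plan is to reproduce Nevanlinna's classical proof of the Second Main Theorem, with the ordinary derivative $f'$ replaced by a generic directional derivative $\partial_u f$ ($u\in S_m$), the two analytic inputs being the First Main Theorem recorded in Section 1 and the logarithmic derivative estimate of Lemma \ref{L.1}. Since the characteristic function is invariant up to $O(1)$ under a Möbius change of coordinates on $\mathbb{P}^1$, and such a change merely permutes the truncated counting functions $\ol{N}(r,a_j;f)$, I may assume after renaming that $a_q=\infty$ and that $a_1,\ldots,a_{q-1}$ are finite. Thus $\ol{N}(r,a_q;f)=\ol{N}(r,f)$, and the pole $a_q=\infty$ will enter the estimates through the proximity $m(r,f)$.

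First I would introduce the auxiliary function $F=\sum_{j=1}^{q-1}\frac{1}{f-a_j}$. Because the $a_j$ are distinct, on the sphere $\mathbb{C}^m\langle r\rangle$ at most one factor $|f-a_j|$ can be small at a time, which gives the pointwise bound $\sum_{j=1}^{q-1}\log^+\frac{1}{|f-a_j|}\le \log^+|F|+O(1)$; integrating against $\sigma_m$ yields $\sum_{j=1}^{q-1} m(r,a_j;f)\le m(r,F)+O(1)$. Next I would factor $F=\frac{1}{\partial_u f}\sum_{j=1}^{q-1}\frac{\partial_u f}{f-a_j}$ and estimate each logarithmic derivative: writing $\frac{\partial_u(f-a_j)}{f-a_j}=\sum_{k=1}^m u_k\,\frac{\partial_{z_k}(f-a_j)}{f-a_j}$ and applying Lemma \ref{L.1} with $|I|=1$ to $f-a_j$ (whose characteristic is $T(r,f)+O(1)$), I obtain $m\!\left(r,\partial_u f/(f-a_j)\right)=O(\log(rT(r,f)))$ outside a set of finite measure. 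Hence $m(r,F)\le m(r,1/\partial_u f)+O(\log(rT(r,f)))$.

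The heart of the argument is to convert this into a bound on $T(r,f)$. The First Main Theorem for $\partial_u f$ gives $m(r,1/\partial_u f)=T(r,\partial_u f)-N(r,1/\partial_u f)+O(1)$, while $m(r,\partial_u f)\le m(r,f)+m(r,\partial_u f/f)=m(r,f)+O(\log(rT(r,f)))$ by Lemma \ref{L.1}. Collecting terms and adding $m(r,f)$ to both sides leads to the usual form
\[m(r,f)+\sum_{j=1}^{q-1} m(r,a_j;f)\le 2T(r,f)-N_{\mathrm{ram}}(r)+O(\log(rT(r,f))),\]
where $N_{\mathrm{ram}}(r)=N(r,1/\partial_u f)+2N(r,f)-N(r,\partial_u f)$. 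Invoking the First Main Theorem for each finite $a_j$ and for $f=\infty$ to replace the proximity functions by $T(r,f)-N(\cdot)$, I arrive at
\[(q-2)T(r,f)\le N(r,f)+\sum_{j=1}^{q-1}N(r,a_j;f)-N_{\mathrm{ram}}(r)+O(\log(rT(r,f))).\]

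The final and most delicate step is the truncation. For a generic direction $u$ the directional derivative $\partial_u f$ vanishes to order $p-1$ along a component of $\{f=a_j\}$ of multiplicity $p$, and a pole of $f$ of order $p$ becomes a pole of $\partial_u f$ of order $p+1$; these local relations force $N_{\mathrm{ram}}(r)\ge\big(N(r,f)-\ol{N}(r,f)\big)+\sum_{j}\big(N(r,a_j;f)-\ol{N}(r,a_j;f)\big)$, so the excess multiplicities are absorbed and the last displayed bound collapses to $(q-2)T(r,f)\le \sum_{j=1}^q\ol{N}(r,a_j;f)+O(\log(rT(r,f)))$ (counting $a_q=\infty$), valid outside a set of finite measure. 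I expect this multiplicity bookkeeping to be the main obstacle: in $\mathbb{C}^m$ the order-of-vanishing relations between $f$ and $\partial_u f$ can break down along analytic sets of codimension $\ge 2$ and for a measure-zero set of directions $u$, so one must fix a generic $u$ and exploit that the counting functions, being integrals of divisors against $\upsilon_m^{m-1}$, are insensitive to sets of codimension $\ge 2$; making these genericity and codimension statements precise is the technical crux, after which the estimate reduces to the classical algebra carried out above.
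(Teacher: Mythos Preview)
The paper does not prove this lemma; it merely quotes it from Hu and Yang \cite[Lemma 1.2]{HY1} as a known form of the Second Main Theorem in $\mathbb{C}^m$, so there is no in-paper argument to compare against. Your sketch is the standard Nevanlinna--Vitter--Stoll proof transported to several variables via a generic directional derivative, and the outline (M\"obius reduction, the auxiliary sum $F$, logarithmic-derivative bound from Lemma~\ref{L.1}, First Main Theorem, and the ramification/truncation step) is correct in spirit and matches how the cited references establish the result. Your own caveat about the ``technical crux'' is exactly right: the only genuine work beyond the one-variable algebra is verifying that for a generic $u$ the divisor identities $\mu^0_{\partial_u f}\ge \mu^0_{f-a_j}-1$ and $\mu^\infty_{\partial_u f}=\mu^\infty_f+1$ hold off a codimension-$\ge 2$ set, and that such sets do not affect the counting functions. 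If you want a self-contained write-up, that step deserves a precise statement (e.g.\ via a local factorization $f-a_j=h^{p}\cdot(\text{unit})$ with $h$ reduced and the observation that $\partial_u h$ does not vanish identically on $\{h=0\}$ for generic $u$); as it stands your proposal is a faithful roadmap rather than a complete proof, but nothing in it is wrong.
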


\begin{lem}\label{L.3} \cite[Theorem 1.26]{HLY} Let $f:\mathbb{C}^m\to\mathbb{P}^1$ be non-constant meromorphic function. Assume that 
$R(z, w)=\frac{A(z, w)}{B(z, w)}$. Then
\beas T\left(r, R_f\right)=\max \{p, q\} T(r, f)+O\Big(\sideset{}{_{j=0}^p}{\sum} T(r, a_j)+\sideset{}{_{j=0}^q}{\sum}T(r, b_j)\Big),\eeas
where $R_f(z)=R(z, f(z))$ and two coprime polynomials $A(z, w)$ and $B(z,w)$ are given
respectively as follows: $A(z,w)=\sum_{j=0}^p a_j(z)w^j$ and $B(z,w)=\sum_{j=0}^q b_j(z)w^j$.
\end{lem}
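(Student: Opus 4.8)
The plan is to establish the characteristic-function identity for the composite meromorphic function $R_f(z) = R(z, f(z))$, where $R(z,w) = A(z,w)/B(z,w)$ with coprime polynomials $A(z,w) = \sum_{j=0}^p a_j(z) w^j$ and $B(z,w) = \sum_{j=0}^q b_j(z) w^j$. This is the several-variable analogue of the classical Valiron--Mokhon'ko theorem, and the strategy mirrors the one-variable proof: one proves the two inequalities $T(r, R_f) \leq \max\{p,q\} T(r,f) + S$ and $T(r, R_f) \geq \max\{p,q\} T(r,f) + S$ separately, where throughout $S$ abbreviates the error term $O\big(\sum_{j=0}^p T(r,a_j) + \sum_{j=0}^q T(r,b_j)\big)$. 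The main tools available are the first main theorem, the definition $T = m + N$, and Lemma \ref{L.2} (the several-variable second main theorem / Nevanlinna inequality), together with elementary properties of $m$ and $N$ under sums and products that carry over verbatim to $\mathbb{C}^m$.

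First I would prove the upper bound. Writing $A_f(z) = \sum_{j=0}^p a_j(z) f(z)^j$, one estimates the proximity function using $\log^+|A_f| \leq \sum_j \log^+|a_j| + p\log^+|f| + O(1)$, so that $m(r, A_f) \leq p\, m(r,f) + \sum_j m(r, a_j) + O(1)$; a parallel estimate controls $N(r, A_f)$ by $p\,N(r,f)$ plus the poles of the coefficients, giving $T(r, A_f) \leq p\, T(r,f) + O(\sum_j T(r,a_j))$, and similarly $T(r, B_f) \leq q\, T(r,f) + O(\sum_j T(r,b_j))$. Since $R_f = A_f / B_f$ and $T(r, g/h) \leq T(r,g) + T(r,h) + O(1)$, combined with a rescaling that replaces $p+q$ by $\max\{p,q\}$ — this last reduction is the standard observation that one may factor out the dominant power and treat $R$ as a rational function whose numerator and denominator degrees are both at most $\max\{p,q\}$ — one obtains $T(r, R_f) \leq \max\{p,q\}\, T(r,f) + S$.

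The lower bound is the more delicate half and is where I expect the main obstacle to lie. The idea is to invert the relation: from $w^{\max\{p,q\}}$ one expresses $f$ (or a suitable power of $f$) rationally in terms of $R_f$ with coefficients in $S(f)$, using the coprimality of $A$ and $B$. Concretely, by the coprimality hypothesis there exist polynomials in $w$ over the coefficient field realizing a B\'ezout-type identity, which lets one solve for the top power of $f$ as a rational expression in $R_f$ whose numerator and denominator have degree at most $\max\{p,q\}$ in $R_f$. Applying the already-proved upper bound \emph{in the reverse direction} then yields $\max\{p,q\}\, T(r,f) \leq T(r, R_f) + S$. The delicate point is bookkeeping the coprimality carefully so that no cancellation lowers the effective degree, and ensuring the resultant and the B\'ezout cofactors contribute only to the $S$-term; this is exactly the step where the several-variable setting requires that the coefficient characteristic functions $T(r, a_j), T(r, b_j)$ genuinely absorb into $S$ via the additivity of $m$ and $N$ in $\mathbb{C}^m$, which is guaranteed by the definitions recalled in the introduction.

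Combining the two inequalities gives the asserted identity. I would organize the write-up so that the routine proximity-and-counting estimates for $A_f$ and $B_f$ are done once and reused, and I would flag the inversion step as the crux, since it is there that the hypothesis $\gcd(A,B) = 1$ is indispensable — without it the lower bound fails and only $\leq$ survives. No ingredient beyond Lemmas \ref{L.1}--\ref{L.2} and the first main theorem is needed, so the proof is self-contained within the excerpt.
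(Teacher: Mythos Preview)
This lemma carries no proof in the paper: it is simply quoted as Theorem~1.26 of \cite{HLY}. So there is nothing in the paper to compare your attempt against; what you have written is an independent proof sketch of a cited result.

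Your overall plan --- Valiron's upper bound plus a coprimality-based lower bound --- is the standard route, but the lower-bound step as you describe it does not close. You claim that a B\'ezout identity lets one ``solve for the top power of $f$ as a rational expression in $R_f$ whose numerator and denominator have degree at most $\max\{p,q\}$ in $R_f$'', and that feeding this back into the upper bound yields $\max\{p,q\}\,T(r,f)\le T(r,R_f)+S$. But if $f^{d}$ (with $d=\max\{p,q\}$) were rational of degree $\le d$ in $R_f$, the upper bound would give only $d\,T(r,f)=T(r,f^{d})\le d\,T(r,R_f)+S$, i.e.\ $T(r,f)\le T(r,R_f)+S$ --- off by a factor of $d$ from what is required. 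The relation $A_f-R_fB_f=0$ exhibits $f$ as \emph{algebraic of degree $d$} over the field generated by $R_f$ and the coefficients, not as a degree-one rational function of $R_f$, and no B\'ezout manipulation changes that. The argument in \cite{HLY} (and in the one-variable sources) proceeds differently: after reducing to $p\ge q$ one estimates $m(r,R_f)$ and $N(r,R_f)$ directly, using coprimality through the nonvanishing resultant $\mathrm{Res}_w(A,B)$ to ensure that every zero of $B_f$ is a genuine pole of $R_f$ and that each pole of $f$ contributes with multiplicity $p-q$; combining this with the polynomial case $T(r,B_f)=q\,T(r,f)+S$ and the first main theorem yields both inequalities simultaneously. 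Your upper-bound ``rescaling'' from $p+q$ to $\max\{p,q\}$ is likewise a placeholder rather than an argument: the naive bound $T(r,A_f/B_f)\le T(r,A_f)+T(r,B_f)$ really does produce $p+q$, and the sharpening to $\max\{p,q\}$ comes from the same direct $m$--$N$ analysis, not from any factoring trick.
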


Let $f$ be a non-constant meromorphic function in $\mathbb{C}^m$. Define complex differential polynomials of $f$ 
as follows:
\bea\label{cl1} P(f)=\sideset{}{_{\mathbf{p} \in I}}{\sum} a_{\mathbf{p}} f^{p_0}\big(\partial^{\mathbf{i}_1}(f)\big)^{p_1} \cdots\big(\partial^{\mathbf{i}_l}(f)\big)^{p_l}, \quad \mathbf{p}=\left(p_0, \ldots, p_l\right) \in \mathbb{Z}_{+}^{l+1},\eea
\bea\label{cl2} Q(f)=\sideset{}{_{\mathbf{q} \in J}}{\sum} c_{\mathbf{q}} f^{q_0}\big(\partial^{\mathbf{j}_1}(f)\big)^{q_1} \cdots\big(\partial^{\mathbf{j}_s}(f)\big)^{q_s}, \quad \mathbf{q}=\left(q_0, \ldots, q_s\right) \in \mathbb{Z}_{+}^{s+1}\eea
and 
\bea\label{c13} B(f)=\sideset{}{_{k=0}^{n}}{\sum} b_k f^k,\eea
where $I, J$ are finite sets of distinct elements and $a_{\mathbf{p}}, c_{\mathbf{q}}, b_k$ are rational functions in $\mathbb{C}^m$ such that $b_n \not \equiv 0$.

In 2014, Hu and Yang \cite[Lemma 2.1]{ps1} generalised Clunie-lemma to high dimension. Note that if $\rho(f)<+\infty$, then by Lemma \ref{L.1}, we have $m(r,\partial^I(f)/f)=O(\log r)$ as $r\to +\infty$. Now by using Lemma \ref{L.1}, the following lemma can be easily derived from the proof of Lemma 2.1 \cite{ps1} (see also \cite{HY0,HLY,BQL}).

\begin{lem}\label{L.4} Let $f:\mathbb{C}^m\to\mathbb{P}^1$ be a non-constant meromorphic function such that $\rho(f)<+\infty$. Assume that $f$ satisfies the differential equation $B(f)Q(f)=P(f)$,
where $P(f)$, $Q(f)$ and $B(f)$ are defined as in (\ref{cl1}), (\ref{cl2}) and (\ref{c13}) respectively.
If $\deg(P(f)) \leq n=\deg(B(f))$, then $m(r, Q(f))=O(\log r)$ as $r\to +\infty$.
\end{lem}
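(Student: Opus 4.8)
The plan is to reproduce, in $\mathbb{C}^{m}$, the Clunie-type argument behind the proof of Lemma~2.1 of \cite{ps1}, the one extra ingredient being a threshold device to handle the locus on which $B(f)$ is small (in one variable this difficulty is absent, since there $B(f)$ is simply $f^{n}$). First observe that, by the logarithmic-derivative estimate Lemma~\ref{L.1} together with the hypothesis $\rho(f)<+\infty$, one has $m\big(r,\partial^{I}(f)/f\big)=O(\log r)$ for every multi-index $I$, and that, since $a_{\mathbf{p}},c_{\mathbf{q}},b_{k}$ are rational, $m(r,a_{\mathbf{p}})=m(r,c_{\mathbf{q}})=m(r,b_{k})=m(r,1/b_{n})=O(\log r)$ as well. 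These are the only quantities that will show up in the estimates, so it suffices to dominate $\log^{+}|Q(f)|$ pointwise by finite sums of $\log^{+}$'s of such functions.

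The heart of the proof is a two-region decomposition $\mathbb{C}^{m}\langle r\rangle=F_{1}\cup F_{2}$ with $F_{1}=\{z:|f(z)|\le R(z)\}$, $F_{2}=\{z:|f(z)|>R(z)\}$ and $R(z)=1+2\sum_{k=0}^{n-1}\big|b_{k}(z)/b_{n}(z)\big|$. On $F_{2}$ one has $|f|>1$, and because $\deg B=n$ with $b_{n}\not\equiv0$ the elementary bound $|B(f)|\ge|b_{n}||f|^{n}-\big(\sum_{k<n}|b_{k}|\big)|f|^{n-1}\ge\tfrac12|b_{n}||f|^{n}$ holds there; moreover, writing a monomial of $P(f)$ as $a_{\mathbf{p}}f^{p_{0}+\cdots+p_{l}}\prod_{k}\big(\partial^{\mathbf{i}_{k}}(f)/f\big)^{p_{k}}$ and using $p_{0}+\cdots+p_{l}\le n$ with $|f|>1$ gives $|P(f)|\le|f|^{n}\sum_{\mathbf{p}\in I}|a_{\mathbf{p}}|\prod_{k}\big|\partial^{\mathbf{i}_{k}}(f)/f\big|^{p_{k}}$ on $F_{2}$. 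Feeding these into $Q(f)=P(f)/B(f)$ yields, on $F_{2}$,
\[
\log^{+}|Q(f)|\le\log^{+}\frac{1}{|b_{n}|}+\sum_{\mathbf{p}\in I}\Big(\log^{+}|a_{\mathbf{p}}|+\sum_{k}p_{k}\log^{+}\Big|\frac{\partial^{\mathbf{i}_{k}}(f)}{f}\Big|\Big)+O(1).
\]
On $F_{1}$ I would instead expand $Q(f)$ directly: a monomial is $c_{\mathbf{q}}f^{q_{0}+\cdots+q_{s}}\prod_{k}\big(\partial^{\mathbf{j}_{k}}(f)/f\big)^{q_{k}}$, and since $|f|\le R(z)$ one has $|f|^{q_{0}+\cdots+q_{s}}\le R(z)^{\deg(Q(f))}$ with $\log^{+}R(z)\le\sum_{k<n}\log^{+}|b_{k}/b_{n}|+O(1)$, giving, on $F_{1}$,
\[
\log^{+}|Q(f)|\le\sum_{\mathbf{q}\in J}\Big(\log^{+}|c_{\mathbf{q}}|+(q_{0}+\cdots+q_{s})\log^{+}R+\sum_{k}q_{k}\log^{+}\Big|\frac{\partial^{\mathbf{j}_{k}}(f)}{f}\Big|\Big)+O(1).
\]
Integrating each inequality over its region and invoking the first paragraph gives $m(r,Q(f))=\int_{F_{1}}\log^{+}|Q(f)|\,\sigma_{m}+\int_{F_{2}}\log^{+}|Q(f)|\,\sigma_{m}=O(\log r)$.

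The step I expect to be the main obstacle — and the only genuine departure from the one-variable Clunie lemma — is precisely the choice of $R(z)$ and the analysis on the set where $B(f)$ is small: one must argue that whenever $|B(f)|<\tfrac12|b_{n}||f|^{n}$ (the only way $Q(f)=P(f)/B(f)$ can be large) one necessarily has $|f|\le R(z)$, so that there $\log^{+}|f|$ is controlled by $\log^{+}$'s of the rational functions $b_{k}/b_{n}$ and integrates to $O(\log r)$; this is exactly what $\deg B=n$ and $b_{n}\not\equiv0$ deliver. Beyond that, only routine care is needed — of the sort standard in $\mathbb{C}^{m}$-value distribution theory — for the lower-dimensional analytic sets on which $b_{n}$ vanishes or some $b_{k}$ has a pole (these are $\sigma_{m}$-null on $\mathbb{C}^{m}\langle r\rangle$), and for deleting the exceptional set of Lemma~\ref{L.1} (routine since $\rho(f)<+\infty$). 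Once these are disposed of, the proof is, as the text notes, a transcription of that of Lemma~2.1 of \cite{ps1} (cf. also \cite{HY0,HLY,BQL}).
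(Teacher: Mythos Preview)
Your proposal is correct and follows precisely the route the paper indicates: the paper does not give a detailed proof but simply remarks that, since $\rho(f)<+\infty$ gives $m(r,\partial^{I}(f)/f)=O(\log r)$ via Lemma~\ref{L.1}, the result ``can be easily derived from the proof of Lemma~2.1 \cite{ps1}'' --- and what you have written out is exactly that derivation (the two-region Clunie splitting with threshold $R(z)$ to control $B(f)$, plus rationality of the coefficients). The only quibble is expository: the threshold device for general $B(f)=\sum b_{k}f^{k}$ is already present in the one-variable versions and in \cite{ps1,HLY,BQL}, so it is not a ``genuine departure'' peculiar to $\mathbb{C}^{m}$, but this does not affect the mathematics.
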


\begin{lem}\label{L.5}\cite[Theorem 1.2]{ps1} Suppose that $f$ is meromorphic and not constant in $\mathbb{C}^m$, that $g=f^n+P_{n-1}(f)$, where $P_{n-1}(f)$ is a differential polynomial of degree at most $n-1$ and that 
\[\parallel N(r,f)+ N(r,0;g)=o(T(r,f)).\]

Then
$g=\left(f+a/n\right)^n$,
where $a$ is a meromorphic function in $\mathbb{C}^m$ determined by the terms of degree $n-1$ in $P_{n-1}(f)$ and by $g$ and $a\in S(f)$.
\end{lem}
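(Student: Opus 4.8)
The plan is to convert the two hypotheses $N(r,f)=S(r,f)$ and $N(r,0;g)=S(r,f)$ into smallness statements via the logarithmic derivative estimate (Lemma \ref{L.1}) and the Clunie-type Lemma \ref{L.4}, thereby reducing everything to a first-order linear system whose solution is the function $a$. First I would record the growth relations: since $\deg P_{n-1}(f)\le n-1$ and $N(r,f)=S(r,f)$, a routine estimate gives $T(r,g)=nT(r,f)+S(r,f)$, so $S(r,g)=S(r,f)$, and since every pole of $g$ lies over a pole of $f$ we also have $N(r,g)=S(r,f)$. For each $j\in\mathbb Z[1,m]$ set $c_j:=\partial_{z_j}(g)/g$. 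By Lemma \ref{L.1} with $|I|=1$ we get $m(r,c_j)=S(r,f)$, while the poles of $c_j$ are simple and occur only over zeros or poles of $g$, so $N(r,c_j)\le \ol N(r,0;g)+\ol N(r,g)=S(r,f)$. Hence $c_j\in S(f)$ for every $j$.

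Next I would feed $g=f^{n}+P_{n-1}(f)$ into $\partial_{z_j}(g)=c_jg$. As $\partial_{z_j}$ preserves the degree of a differential polynomial and multiplication by the small function $c_j$ does too, rearranging yields the identity $f^{n-1}\big(n\,\partial_{z_j}(f)-c_jf\big)=Q_j(f)$, where $Q_j(f):=c_jP_{n-1}(f)-\partial_{z_j}(P_{n-1}(f))$ is a differential polynomial in $f$ with $\deg Q_j(f)\le n-1=\deg(f^{n-1})$. This is exactly the configuration governed by Lemma \ref{L.4}, with the power $f^{n-1}$ in the role of $B(f)$, so $m\big(r,n\,\partial_{z_j}(f)-c_jf\big)=S(r,f)$; since the poles of this expression again sit over poles of $f$ and of $c_j$, we obtain $\delta_j:=n\,\partial_{z_j}(f)-c_jf\in S(f)$ for each $j$.

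With $c_j,\delta_j\in S(f)$ I would now produce $a$. Because $c_j=\partial_{z_j}\log g$, we have $\partial_{z_i}c_j=\partial_{z_i}\partial_{z_j}\log g=\partial_{z_j}c_i$, so the holomorphic form $\tfrac1n\sum_j c_j\,dz_j$ is closed; integrating it on the simply connected $\mathbb C^m$ and exponentiating furnishes a meromorphic $b$ with $\partial_{z_j}(b)=(c_j/n)\,b$ and $g=Kb^{\,n}$ for some constant $K$. Setting $a:=n(b-f)$ and using $\partial_{z_j}(f)=(c_jf+\delta_j)/n$ gives $\partial_{z_j}(a)=(c_j/n)a-\delta_j$, and the compatibility of this overdetermined system is automatic because $\partial_{z_i}\delta_j-\partial_{z_j}\delta_i=c_i\partial_{z_j}(f)-c_j\partial_{z_i}(f)=(c_i\delta_j-c_j\delta_i)/n$. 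Thus $g=K(f+a/n)^{n}$; comparing $g-f^n=P_{n-1}(f)$ with $g-f^n=K(f+a/n)^n-f^n$, and noting that the latter has characteristic $nT(r,f)+S(r,f)$ unless $K=1$, forces $K=1$. Hence $g=(f+a/n)^n$ and $P_{n-1}(f)=(f+a/n)^n-f^n$, which identifies $a$ through the degree-$(n-1)$ part of $P_{n-1}(f)$.

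The hard part will be the two analytic points compressed into the previous paragraph. First, the passage to the global $n$-th root $b$: the divisor of $g$ need not have all multiplicities divisible by $n$, but its zero and pole counting functions are $S(r,f)$, so I would peel off a meromorphic factor $\rho$ carrying the residual multiplicities (legitimate since every divisor on $\mathbb C^m$ is principal), verify $T(r,\rho)=S(r,f)$, and absorb it, making $g/\rho$ a genuine $n$-th power up to a constant. Second, and most delicate, is the smallness $a\in S(f)$: from the system one extracts $m(r,1/a)=S(r,f)$ whenever some $\delta_j\not\equiv0$ (if all $\delta_j\equiv0$ then $P_{n-1}(f)\equiv0$, $g=f^n$, and $a=0$), but the growth of $a$ itself must be controlled using the scarcity of the zeros of $g$ — equivalently of $f+a/n$ — together with the logarithmic-derivative bounds, paralleling the one-dimensional argument of \cite{ps1}. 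It is precisely this growth control of $a$ in the several-variable setting that I expect to be the principal obstacle.
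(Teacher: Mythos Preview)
The paper does not prove this lemma; it is quoted from \cite[Theorem~1.2]{ps1}, so there is no in-paper argument to compare your sketch against. I can only comment on the sketch itself.

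Your first two moves are correct and are how the several-variable Tumura--Clunie argument begins: $c_j:=\partial_{z_j}(g)/g\in S(f)$ via Lemma~\ref{L.1} together with the hypotheses on $N(r,f)$ and $N(r,0;g)$, and then a Clunie step gives $\delta_j:=n\,\partial_{z_j}(f)-c_jf\in S(f)$. One technical slip: Lemma~\ref{L.4} as stated in this paper carries the extra hypotheses $\rho(f)<\infty$ and rational coefficients, neither of which you have here (your coefficient $c_j$ lies only in $S(f)$); you must invoke the general form \cite[Lemma~2.1]{ps1} directly.

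The genuine gap is the global $n$-th root $b$. Closedness of $\tfrac{1}{n}\sum_j c_j\,dz_j$ is not enough: its periods around components of $g^{-1}(0)\cup g^{-1}(\infty)$ equal $\tfrac{2\pi i}{n}$ times the local multiplicities, and nothing in the hypotheses forces these to be divisible by $n$. Your patch --- peel off a meromorphic $\rho$ carrying the residual multiplicities, then absorb it --- introduces a circularity. With $g=\rho\,b^{\,n}$ one has $\partial_{z_j}(b)/b=\bigl(c_j-\partial_{z_j}(\rho)/\rho\bigr)/n$, so the system for $a:=n(b-f)$ picks up an $f$-term,
\[
\partial_{z_j}(a)=-\bigl(\partial_{z_j}(\rho)/\rho\bigr)\,f+(\text{small})\cdot a+(\text{small}),
\]
which blocks the smallness argument for $a$ unless you already know $\rho\equiv 1$; but the natural Clunie comparison of $g=\rho(f+a/n)^n$ with $g=f^n+P_{n-1}(f)$ that would force $\rho\equiv 1$ needs $a\in S(f)$ to make the remainder a differential polynomial of degree $\le n-1$. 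The argument in \cite{ps1} bypasses the root entirely: iterate $\partial_{z_j}(f)=(c_jf+\delta_j)/n$ to express every $\partial^{I}f$ as $\alpha_If+\beta_I$ with $\alpha_I,\beta_I\in S(f)$, thereby reducing $P_{n-1}(f)$ to an ordinary polynomial $\sum_{k\le n-1}b_kf^k$ with $b_k\in S(f)$; then set $a:=b_{n-1}$ and show directly, using $N(r,0;g)=S(r,f)$, that the remainder $g-(f+a/n)^n$ --- now a polynomial of degree $\le n-2$ in $f$ with small coefficients --- vanishes identically. I would rework your third paragraph along those lines rather than try to rescue the root construction.
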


\begin{lem}\label{L.6}
Let $f,g:\mathbb{C}^m\to\mathbb{P}^1$ be non-constant meromorphic functions and let $m_1$ and $m_2$ be two positive integers such that $m_1\geq 3$ and $m_2\geq 3$. Suppose $a, b:\mathbb{C}^m\to\mathbb{P}^1$ be non-zero meromorphic functions such that $a,b\in S(f)$.
Then the functional equation 
\bea\label{sdd1}af^{m_1}+bg^{m_2}=1\eea
holds only when $m_1=m_2=3$. If $f$ and $g$ are entire functions, then Eq. (\ref{sdd1}) cannot hold.
\end{lem}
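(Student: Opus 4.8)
The plan is to apply the truncated Second Main Theorem (Lemma~\ref{L.2}) to the auxiliary function $F:=af^{m_1}$, exploiting that $F$, $1-F=bg^{m_2}$ and $1/F$ all have their zeros of large multiplicity, up to the negligible contribution of the small functions $a$ and $b$. At the outset I would reduce to the case where $f$ and $g$ are transcendental: if, say, $f$ were rational, then $T(r,f)=O(\log r)$ would force $a\in S(f)$ to be constant, and then $b$ and (by the growth relation obtained below) $g$ would be rational as well, so that (\ref{sdd1}) reduces to a Fermat-type equation $u^{m_1}+v^{m_2}=1$ over the field of rational functions, which admits no non-constant solution when $m_1,m_2\ge 3$.

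First I would put $F:=af^{m_1}$ and $G:=bg^{m_2}$, so that $F+G=1$. By Lemma~\ref{L.3} one has $T(r,F)=m_1T(r,f)+O(T(r,a))$ and $T(r,G)=m_2T(r,g)+O(T(r,b))$; combining this with $T(r,G)=T(r,1-F)=T(r,F)+O(1)$ gives $m_1T(r,f)=m_2T(r,g)+o(T(r,f))$. Hence $T(r,f)$, $T(r,g)$ and $T(r,F)$ all have the same order of growth, $F$ is transcendental, and the classes $S(r,f)$ and $S(r,g)$ coincide; I write $S(r)$ for this common class, so that $T(r,a)$, $T(r,b)$ and the error term $O(\log(rT(r,F)))$ of Lemma~\ref{L.2} are all $S(r)$.

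The heart of the argument is the counting estimate. From $\text{div}(F)=\text{div}(a)+m_1\,\text{div}(f)$ one reads off the divisor inequalities $m_1\,\mu^{0}_{f}\le\mu^{0}_{F}+\mu^{\infty}_{a}$ and $m_1\,\mu^{\infty}_{f}\le\mu^{\infty}_{F}+\mu^{0}_{a}$, together with $\text{supp}\,\mu^{0}_{F}\subseteq\text{supp}\,\mu^{0}_{f}\cup\text{supp}\,\mu^{0}_{a}$ and $\text{supp}\,\mu^{\infty}_{F}\subseteq\text{supp}\,\mu^{\infty}_{f}\cup\text{supp}\,\mu^{\infty}_{a}$. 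Passing to counting functions and using $\overline N\le N$ together with $T(r,a)=S(r)$, these give $\overline N(r,0;F)\le\tfrac{1}{m_1}N(r,0;F)+S(r)$ and $\overline N(r,\infty;F)\le\tfrac{1}{m_1}N(r,\infty;F)+S(r)$; and since $F-1=-G$, so that $\overline N(r,1;F)=\overline N(r,0;G)$, the same reasoning applied to $G=bg^{m_2}$ gives $\overline N(r,1;F)\le\tfrac{1}{m_2}N(r,1;F)+S(r)$. I expect the one genuinely delicate point to be this divisor bookkeeping, namely pinning down how the zeros and poles of $a$ and $b$ perturb the multiplicity structure of $F$ in the several-variables setting; everything else is routine Nevanlinna theory.

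Finally I would feed these bounds into Lemma~\ref{L.2} with $q=3$ and the three target values $0$, $1$, $\infty$, bounding each of $N(r,0;F)$, $N(r,1;F)$, $N(r,\infty;F)$ by $T(r,F)+O(1)$ via the First Main Theorem, to arrive at
\[
T(r,F)\le\overline N(r,0;F)+\overline N(r,1;F)+\overline N(r,\infty;F)+S(r)\le\Big(\tfrac{2}{m_1}+\tfrac{1}{m_2}\Big)T(r,F)+S(r).
\]
Since $T(r,F)\to\infty$, this forces $\tfrac{2}{m_1}+\tfrac{1}{m_2}\ge 1$, and under $m_1,m_2\ge 3$ this is possible only when $m_1=m_2=3$, which is the first assertion. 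For the last assertion, if $f$ and $g$ are entire then $F=af^{m_1}$ has poles only among the poles of $a$, so $\overline N(r,\infty;F)=S(r)$ and the displayed estimate improves to $T(r,F)\le\big(\tfrac{1}{m_1}+\tfrac{1}{m_2}\big)T(r,F)+S(r)$; as $\tfrac{1}{m_1}+\tfrac{1}{m_2}\le\tfrac{2}{3}<1$, this is a contradiction, so (\ref{sdd1}) cannot hold for entire $f$ and $g$.
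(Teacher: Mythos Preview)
Your proposal is correct and follows essentially the same route as the paper: apply the Second Main Theorem (Lemma~\ref{L.2}) with three target values to $F=af^{m_1}$, use that the zeros, poles, and $1$-points of $F$ have high multiplicity (coming from $f^{m_1}$ and $g^{m_2}$) up to $S(r)$, and conclude $\tfrac{2}{m_1}+\tfrac{1}{m_2}\ge 1$, sharpening to $\tfrac{1}{m_1}+\tfrac{1}{m_2}\ge 1$ in the entire case. The paper works with the M\"obius transform $h=(af^{m_1}-1)/af^{m_1}=1-1/F$ instead of $F$ itself and phrases the counting bounds as $\overline N(r,0;h)\le\overline N(r,0;g)+S(r)$ etc.\ rather than your $\overline N\le\tfrac{1}{m}N+S(r)$, but these are interchangeable formulations of the same estimate; your explicit reduction to the transcendental case is an extra care the paper omits.
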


\begin{proof} Using Lemma \ref{L.3} to (\ref{sdd1}), we get
\bea\label{sdd.1} \parallel\;m_1T(r,f)+o(T(r,f))=m_2T(r,g)+o(T(r,g)).\eea

Let
\bea\label{sdd.2} h=(af^{m_1}-1)/af^{m_1}.\eea

Then $h$ is a non-constant meromorphic function in $\mathbb{C}^m$. Using Lemma \ref{L.3} to (\ref{sdd.2}), we get 
\[\parallel T(r,h)+o(T(r,h))=m_1T(r,f)+o(T(r,f)).\]

Now from (\ref{sdd1}) and (\ref{sdd.2}), it is easy to deduce that 
\[\parallel\;\ol N(r,h)\leq \ol N\left(r,0, af^{m_1}\right)\leq N\left(r,0,a\right)+\ol N\left(r,0,f^{m_1}\right)=\ol N\left(r,0,f\right)+o(T(r,f)),\]
\[\parallel\;\ol N\left(r,0,h\right)=\ol N\left(r,1,af^{m_1}\right)\leq \ol N\left(r,0,g^{m_2}\right)+N\left(r,0,b\right)\leq \ol N\left(r,0,g\right)+o(T(r,g))\]
and 
\[\parallel\;\ol N\left(r,1,h\right)\leq \ol N(r,f^{m_1})+\ol N(r,a)=\ol N(r,f)+o(T(r,f)). \] 

Therefore by Lemma \ref{L.2}, we get
\bea\label{sdd.3} \parallel\;m_1T(r,f)&=&T(r,h)+o(T(r,h))\\&\leq& \ol N(r,h)+\ol N(r,0,h)+\ol N(r,1,h)+o(T(r,h))\nonumber\\&\leq&
\ol N(r,0,f)+\ol N(r,0,g)+\ol N(r,f)+o(T(r,f))+o(T(r,g)).\nonumber
\eea

Now in view of first main theorem and using (\ref{sdd.1}) to (\ref{sdd.3}), we get
\beas \parallel\;\left(m_1-2-m_1/m_2\right)T(r,f)\leq o(T(r,f))\eeas
from which, we get $m_1-2-m_1/m_2\leq 0$, i.e., 
\bea\label{sdd.4}\frac{2}{m_1}+\frac{1}{m_2}\geq 1.\eea  

Since $m_1\geq 3$ and $m_2\geq 3$, (\ref{sdd.4}) yields $m_1=m_2=3$. 
If $f$ is entire, then from (\ref{sdd.3}), we get
\[ \parallel\;m_1T(r,f)\leq \ol N(r,0,f)+\ol N(r,0,g)+o(T(r,f))+o(T(r,g))\]
and so from (\ref{sdd.1}), we get 
\[\parallel\;\left(m_1-1-m_1/m_2\right)T(r,f)\leq o(T(r,f)).\]

Consequently $\frac{1}{m_1}+\frac{1}{m_2}\geq 1$, which is impossible for $m_1\geq 3$ and $m_2\geq 3$.
Hence the proof.
\end{proof}

\begin{lem}\label{L.8} Let $n\geq 3$ be an integer and let $P_{d}(f)$ be a differential polynomial with rational functions in $\mathbb{C}^m$ as its coefficients such that $d\leq n-2$. If $f$ is an entire solution of the Eq. (\ref{dd2}), then $f$ is of finite order.
\end{lem}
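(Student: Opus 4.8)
The plan is to show that $T(r,f)=O(r)$ as $r\to\infty$, which immediately yields $\rho(f)\le 1<\infty$. Write $h(z)=p_1(z)e^{\langle\alpha,\ol z\rangle}+p_2(z)e^{\langle\beta,\ol z\rangle}$ for the right-hand side of (\ref{dd2}). Since $\langle\alpha,\ol z\rangle$ is linear in $z$, the Cauchy--Schwarz inequality gives $|\langle\alpha,\ol z\rangle|\le\|\alpha\|\,\|z\|$, hence $|e^{\langle\alpha,\ol z\rangle}|=e^{\mathrm{Re}\langle\alpha,\ol z\rangle}\le e^{\|\alpha\|\,\|z\|}$, and similarly for $\beta$; as $p_1,p_2$ are polynomials, one obtains $\log^+|h(z)|\le\max(\|\alpha\|,\|\beta\|)\,\|z\|+O(\log\|z\|)$ for $\|z\|$ large. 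Integrating over the sphere $\mathbb{C}^m\langle r\rangle$, on which $\sigma_m$ has total mass $1$, yields $m(r,h)=O(r)$; since $h$ is entire, $N(r,h)=0$, so $T(r,h)=O(r)$.

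Next I rewrite the equation as $f^n=h-P_d(f)$ and pass to proximity functions. Because $f$ is entire, $N(r,f)=0$, so $T(r,f)=m(r,f)$ and $m(r,f^n)=n\,m(r,f)$. Writing $P_d(f)=\sum_{i\in I}c_i\Psi_i(f)$ with $\Psi_i(f)=f^{i_0}(\partial^{\nu_1}(f))^{i_1}\cdots(\partial^{\nu_k}(f))^{i_k}$ and $|i|=i_0+i_1+\cdots+i_k\le d$, I factor each monomial as
\[\Psi_i(f)=f^{|i|}\Big(\frac{\partial^{\nu_1}(f)}{f}\Big)^{i_1}\cdots\Big(\frac{\partial^{\nu_k}(f)}{f}\Big)^{i_k},\]
so that, by subadditivity of $m(r,\cdot)$ together with Lemma \ref{L.1},
\[m\big(r,\Psi_i(f)\big)\le|i|\,m(r,f)+\sum_{j=1}^{k}i_j\,m\Big(r,\frac{\partial^{\nu_j}(f)}{f}\Big)\le d\,m(r,f)+O\big(\log^+T(r,f)+\log^+\log T(r,f)\big)\]
for all $r$ outside a set $E$ with $\int_E d\log r<\infty$. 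Since the coefficients $c_i$ are rational, $m(r,c_i)=O(\log r)$; summing over the finite set $I$ gives $m(r,P_d(f))\le d\,m(r,f)+O(\log^+T(r,f))+O(\log r)$ for $r\notin E$.

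Combining these estimates, for $r\notin E$ we get
\[n\,m(r,f)=m(r,f^n)\le m(r,h)+m\big(r,P_d(f)\big)+O(1)\le d\,m(r,f)+O(r)+O\big(\log^+T(r,f)\big),\]
that is, $(n-d)\,T(r,f)\le O(r)+O(\log^+T(r,f))$. As $n-d\ge 2>0$, this forces $T(r,f)=O(r)$ through the values $r\notin E$. To eliminate $E$ I invoke the standard monotonicity argument: $T(r,f)$ is non-decreasing and $E$ has finite logarithmic measure, so for every large $r$ there is $r'\in[r,2r]\setminus E$, whence $T(r,f)\le T(r',f)=O(r')=O(r)$. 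Therefore $T(r,f)=O(r)$ for all large $r$, and $\rho(f)\le 1<\infty$.

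The argument is essentially routine; the two points that require care are the bookkeeping in the logarithmic-derivative estimate — ensuring that every differential monomial of $P_d(f)$ contributes at most $d\,m(r,f)$ up to $S(r,f)$-terms, which is exactly where the hypothesis $d\le n-2$ (so $n-d\ge 2$) is used — and the removal of the exceptional set produced by Lemma \ref{L.1}; I expect the latter, though classical, to be the main technical nuisance.
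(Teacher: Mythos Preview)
Your argument is correct and in fact yields the sharper conclusion $\rho(f)\le 1$. The paper proceeds differently: it argues by contradiction, assuming $\rho(f)=+\infty$ so that the right-hand side $g=p_1e^{\langle\alpha,\ol z\rangle}+p_2e^{\langle\beta,\ol z\rangle}$ (which has $T(r,g)=O(r)$) lies in $S(f)$; then it writes $f^{n-1}\cdot f=g-P_d(f)$ with the right side a differential polynomial of degree $d\le n-2<n-1$ having $S(f)$-coefficients, and invokes the Clunie-type lemma (Lemma 2.1 of \cite{ps1}) to get $m(r,f)=o(T(r,f))$, a contradiction. Your route bypasses the Clunie machinery entirely by factoring each monomial as $f^{|i|}$ times logarithmic derivatives and applying Lemma \ref{L.1} directly; this is essentially unwinding the proof of Clunie in this special case, and it buys you the explicit growth bound $T(r,f)=O(r)$ rather than merely $\rho(f)<\infty$. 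Two small remarks: your inequality $(n-d)T(r,f)\le O(r)+O(\log^+T(r,f))$ only needs $n>d$, so the full strength $d\le n-2$ is not actually used at that step; and when you sum the monomial estimates you should note that the finitely many exceptional sets from Lemma \ref{L.1} combine into a single set of finite logarithmic measure, which you tacitly assume.
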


\begin{proof} It is easy to decide that $f$ is a transcendental. If possible, suppose $\rho(f)=+\infty$. Then for any given large $M_{0}>0$ and sufficiently large $r$, we have 
\[T(r,f)>r^{M_{0}}.\]
 
Note that $T(r,e^{\langle \alpha, \ol z\rangle})=m(r,e^{\langle \alpha, \ol z\rangle})=O(r)$. Similarly $T(r,e^{\langle \beta, \ol z\rangle})=O(r)$  If we take $g=p_1e^{\langle \alpha, \ol z\rangle}+p_2e^{\langle \beta, \ol z\rangle}$, then by Lemma \ref{L.3}, we get 
\[T(r,g)\leq T(r,e^{\langle \alpha, \ol z\rangle})+T(r,e^{\langle \beta, \ol z\rangle})+O(\log r)\]
and so $T(r,g)=O(r)$. Let us take $M_0>d$. Then $\frac{T(r,g)}{T(r,f)}\rightarrow 0$ as $r\rightarrow \infty$. This shows that $g\in S(f)$. Now from (\ref{dd2}), we have 
\bea\label{o1} f^{n}=\mathcal{Q}(f),\eea
where $\mathcal{Q}(f)=g-P_d(f)$. Now using Lemma 2.1 \cite{ps1} to (\ref{o1}), we get $m(r,f)=o(T(r,f))$ and so $T(r,f)=o(T(r,f))$, which is absurd. So $\rho(f)<+\infty$. Hence the proof.
\end{proof}

\begin{lem}\label{L.7} \cite[Corollary 4.5]{BCL} Let $a_1,a_2,\ldots,a_n$ be $n$ meromorphic functions
in $\mathbb{C}^m$ and $g_1,g_2,\ldots,g_n$ be $n$ entire functions in $\mathbb{C}^m$ satisfying
\[\sideset{}{_{i=1}^{n}}{\sum}a_ie^{g_i}\equiv 0.\]
If for all $1\leq i\leq n$,
\[T(r,a_i)=o(T(r,e^{g_j-g_k})), \;\; j\neq k,\]
then $a_i\equiv 0$ for $1\leq i\leq n$.
\end{lem}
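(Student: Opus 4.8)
\emph{Overview of the plan.} I would prove the lemma by induction on $n$, using Nevanlinna's second main theorem in $\mathbb{C}^m$ together with the elementary fact that every $e^{g_i}$ is zero-free. Throughout I would assume (as is needed for the statement and is implicit in the hypothesis) that no difference $g_j-g_k$ is constant, so that each $e^{g_j-g_k}$ is transcendental; consequently $\log r$, $\log^+T(r,a_i)$, and every error term $S(r,\cdot)$ appearing below is $o\big(T(r,e^{g_j-g_k})\big)$.

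\emph{Base cases and a trivial reduction.} For $n=1$, $a_1e^{g_1}\equiv 0$ forces $a_1\equiv 0$. For $n=2$, if $a_1$ and $a_2$ are not both $\equiv 0$ then $a_1/a_2=-e^{g_2-g_1}$, so Lemma \ref{L.3} gives $T(r,e^{g_2-g_1})=T(r,a_1/a_2)+O(1)\le T(r,a_1)+T(r,a_2)+O(1)=o\big(T(r,e^{g_2-g_1})\big)$, which is absurd; hence $a_1\equiv a_2\equiv 0$. For $n\ge 3$: if $a_{i_0}\equiv 0$ for some $i_0$, delete that term and apply the induction hypothesis to the remaining $(n-1)$-term identity (the surviving pairs form a subset of the original ones, so its hypotheses are inherited). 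Thus we may assume every $a_i\not\equiv 0$.

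\emph{Main step.} Set $f_i=a_ie^{g_i}$, so $\sum_{i=1}^n f_i\equiv 0$, and let $\Phi=[f_1:\cdots:f_n]:\mathbb{C}^m\to\mathbb{P}^{n-1}$ be the associated holomorphic curve (via a reduced representation). The identity forces $\Phi$ into the hyperplane $H_0=\{x_1+\cdots+x_n=0\}\cong\mathbb{P}^{n-2}$. If $\Phi$ were linearly degenerate in $\mathbb{P}^{n-2}$, there would be a non-trivial constant relation $\sum d_if_i\equiv 0$ not proportional to $\sum f_i$; subtracting a suitable multiple of $\sum f_i\equiv 0$ then yields an identity $\sum_{i\in S}c_ia_ie^{g_i}\equiv 0$ with $\varnothing\ne S\subsetneq\{1,\dots,n\}$ and all $c_ia_i\not\equiv 0$, and the induction hypothesis gives $a_i\equiv 0$ for $i\in S$ — a contradiction. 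So $\Phi$ is linearly non-degenerate, and the $n$ coordinate hyperplanes $\{x_i=0\}$ cut out $n$ hyperplanes in general position on $H_0\cong\mathbb{P}^{n-2}$. The second main theorem for them — which for $n=3$ is Lemma \ref{L.2} for the three distinct points $\{x_i=0\}\cap H_0\in\mathbb{P}^1$, and for $n\ge 4$ is Cartan's second main theorem for holomorphic curves over $\mathbb{C}^m$ (see, e.g., \cite{HLY}) — gives
\[T(r,\Phi)\le\sideset{}{_{i=1}^{n}}{\sum}\ol N\big(r,\Phi^{*}\{x_i=0\}\big)+S(r,\Phi).\]
Since $e^{g_i}$ has no zeros and no poles, the $i$-th coordinate of a reduced representation vanishes only at zeros of $a_i$ or at poles of $a_1,\dots,a_n$, whence $\ol N\big(r,\Phi^{*}\{x_i=0\}\big)=O\big(\sideset{}{_{j=1}^{n}}{\sum}T(r,a_j)\big)=o\big(T(r,e^{g_s-g_t})\big)$ for every $s\ne t$. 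On the other hand, by Lemma \ref{L.3}, $T(r,\Phi)\ge T(r,f_i/f_j)-O(1)$ and $T(r,f_i/f_j)=T\big(r,(a_i/a_j)e^{g_i-g_j}\big)\ge T(r,e^{g_i-g_j})-o\big(T(r,e^{g_i-g_j})\big)$ for each pair, so $\Phi$ is non-constant and every term on the right-hand side of the displayed inequality — including $S(r,\Phi)$ — is $o(T(r,\Phi))$. Hence $T(r,\Phi)\le o(T(r,\Phi))$, forcing $T(r,\Phi)$ to be bounded, which contradicts that $\Phi$ is non-constant. Therefore not every $a_i$ can be $\not\equiv 0$, and the induction is complete.

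\emph{Main obstacle.} The exponential-identity computation is short; I expect the real work to lie in the higher-dimensional bookkeeping — passing to a reduced representation of $\Phi$ when $m\ge 2$ (its indeterminacy locus has codimension $\ge 2$), justifying $T(r,f_i/f_j)\le T(r,\Phi)+O(1)$ and the bound $\ol N\big(r,\Phi^{*}\{x_i=0\}\big)=O(\sum_j T(r,a_j))$ in several variables, and above all verifying that each degenerate configuration (a vanishing $a_i$, a linearly degenerate $\Phi$, and — if one does not exclude constant $g_j-g_k$ outright — the merging of proportional exponential terms with the possible collapse of a coefficient) strictly reduces the number of exponential terms while inheriting the growth hypotheses, so that the induction genuinely terminates. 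Each of these steps is standard within $\mathbb{C}^m$ Nevanlinna theory, but this is where the care is needed.
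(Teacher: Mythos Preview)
The paper does not give its own proof of this lemma: it is quoted verbatim as \cite[Corollary 4.5]{BCL} and used as a black box, so there is nothing in the paper to compare your argument against. Your induction via the Borel-type argument (reduce on vanishing or linearly dependent coefficients, then apply the second main theorem/Cartan to the curve $[a_1e^{g_1}:\cdots:a_ne^{g_n}]$ restricted to $\{x_1+\cdots+x_n=0\}$) is the standard route to this result and is correct as outlined; the care points you flag in your ``main obstacle'' paragraph --- the reduced representation in codimension $\ge 2$, the bound $\ol N(r,\Phi^{*}\{x_i=0\})=O(\sum_j T(r,a_j))$, and the inheritance of the growth hypotheses under each reduction --- are exactly the places where the write-up would need a sentence or two of justification, but none of them hides a genuine obstruction.
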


\section {{\bf Proof of Theorem \ref{t5}}}
Let $f$ be a transcendental entire solution of (\ref{edd2}). Applying the operator $\partial_{z_i}$ on the both sides of (\ref{edd2}), we have
\bea\label{edd3} nf^{n-1}\partial_{z_i}(f)+\partial_{z_i}(P_d(f))=(p\alpha_{i}+\partial_{z_i}(p))e^{\langle \alpha, \ol z\rangle},\eea
for any $i\in\mathbb{Z}[1,m]$. Eliminating $e^{\langle \alpha, \ol z\rangle}$ from (\ref{edd2}) and (\ref{edd3}), we have
\bea\label{edd4} \left(\left(p\alpha_{i}+\partial_{z_i}(p)\right)f-np\partial_{z_i}(f)\right)f^{n-1}=-\left(p\alpha_{i}+\partial_{z_i}(p)\right)P_d(f)+p\partial_{z_i}(P_d(f)),\eea
for any $i\in\mathbb{Z}[1,m]$.

\smallskip
First we suppose $(p\alpha_{i}+\partial_{z_i}(p))f-np\partial_{z_i}(f)\not\equiv 0$ for some $i\in\mathbb{Z}[1,m]$. Now using Lemma 2.1 \cite{ps1} to (\ref{edd4}), we get
\bea\label{edd5} \left\{\begin{array}{clcr} m\left(r,(p\alpha_{i}+\partial_{z_i}(p))f-np\partial_{z_i}(f)\right)=o(T(r,f)),\\
m\left(r,(p\alpha_{i}+\partial_{z_i}(p))f^2-npf\partial_{z_i}(f)\right)=o(T(r,f)).\end{array}\right.\eea

Note that $f$ is an entire function. Therefore using Lemma \ref{L.3} and (\ref{edd5}), we get
\beas T(r,f)\leq T(r,(p\alpha_{i}+\partial_{z_i}(p))f^2-npf\partial_{z_i}(f))
+T(r,(p\alpha_{i}+\partial_{z_i}(p))f-np\partial_{z_i}(f))=o(T(r,f)),
\eeas
which is impossible.

\smallskip
Next we suppose $(p\alpha_{i}+\partial_{z_i}(p))f-np\partial_{z_i}(f)\equiv 0$ for any $i\in\mathbb{Z}[1,m]$. Let $F=f^n$ and $G=pe^{\langle \alpha, \ol z\rangle}$. If possible, suppose $F$ and $G$ are linearly independent. Then by Corollary 1.40 \cite{HLY}, there exists $j\in \mathbb{Z}[1, m]$ such that
\[W(F,G)=\left|\begin{array}{ll} F&G\\\partial_{z_j}(F)&\partial_{z_j}(G)\end{array}\right|\not\equiv 0\]
and so $e^{\langle \alpha, \ol z\rangle}\left((p\alpha_{j}+\partial_{z_j}(p)\big)f-np\partial_{z_j}(f)\right)\not\equiv 0$,
i.e., $(p\alpha_{j}+\partial_{z_j}(p))f-np\partial_{z_j}(f)\not\equiv 0$,
which is absurd. Hence $F$ and $G$ are linearly dependent and so $f^n=cpe^{\langle \alpha, \ol z\rangle}$, where $c\in\mathbb{C}\backslash \{0\}$.
Now from (\ref{edd2}), we get
\[(1-1/c)f^n=-P_d(f).\]

If $c\neq 1$, then using Lemma 2.1 \cite{ps1}, we get $m(r,f)=o(T(r,f))$ and so $T(r,f)=o(T(r,f))$, which is impossible. Hence $c=1$ and so $P_d(f)\equiv 0$. Finally we have $f(z)=q(z)e^{\langle \alpha, \ol z\rangle/n}$, where $q(z)$ is a polynomial in $\mathbb{C}^m$ such that $q^n(z)=p(z)$. Hence the proof.

\section {{\bf Proof of Theorem \ref{t1}}}
Let $f$ be a transcendental entire solution of (\ref{dd2}). Clearly by Lemma \ref{L.8}, we have $\rho(f)<+\infty$. Applying the operator $\partial_{z_i}$ on the both sides of (\ref{dd2}), we get
\bea\label{dd3} nf^{n-1}\partial_{z_i}(f)+\partial_{z_i}(P_d(f))=(p_1\alpha_{1i}+\partial_{z_i}(p_1))e^{\langle \alpha,\ol z\rangle}
+(p_1\alpha_{2i}+\partial_{z_i}(p_2))e^{\langle \beta,\ol z\rangle}\eea
for any $i\in\mathbb{Z}[1,m]$. Eliminating $e^{\langle \alpha,\ol z\rangle}$ and $e^{\langle \beta,\ol z\rangle}$ from (\ref{dd2}) and (\ref{dd3}), we get respectively
\bea\label{dd4} \big(p_1\alpha_{1i}+\partial_{z_i}(p_1)\big)f^n-np_1f^{n-1}\partial_{z_i}(f)+Q_{i,d}(f)=\beta_{1i} e^{\langle \beta,\ol z\rangle}\eea
and
\bea\label{dd5} \big(p_2\alpha_{2i}+\partial_{z_i}(p_2)\big)f^n-np_2f^{n-1}\partial_{z_i}(f)+R_{i,d}(f)=-\beta_{1i}e^{\langle \alpha,\ol z\rangle},\eea
where
\bea\label{dd6}\beta_{1i}=\left(p_1 \alpha_{1i}+\partial_{z_i}(p_1)\right) p_2-\left(p_2 \alpha_{2i}+\partial_{z_i}(p_2)\right) p_1, \eea
\bea\label{dd7} Q_{i,d}(f)=\left(p_1 \alpha_{1i}+\partial_{z_i}(p_1)\right) P_d(f)-p_1\partial_{z_i}(P_d(f))\eea
and
\bea\label{dd8}R_{i,d}(f)=\left(p_2 \alpha_{2i}+\partial_{z_i}(p_2)\right) P_d(f)-p_2\partial_{z_i}(P_d(f)).\eea

By the given conditions, we see that $p_1$ and $p_2$ are non-zero polynomials and $\alpha_{1i}\neq 0$, $\alpha_{2i}\neq 0$ and $\alpha_{1i}/\alpha_{2i}\not\in\mathbb{Q}$ for all $i\in\mathbb{Z}[1,m]$. 
If possible, suppose 
\[(p_1 \alpha_{1i}+\partial_{z_i}(p_1)) p_2-(p_2 \alpha_{2i}+\partial_{z_i}(p_2)) p_1\equiv 0\]
for some $i\in\mathbb{Z}[1,m]$. Then
$\partial_{z_i}(p_1)/p_1-\partial_{z_i}(p_2)/p_2\equiv \alpha_{2i}-\alpha_{1i}\neq 0$. This gives
\bea\label{xx} p_1(z)=p_2(z)e^{(\alpha_{1i}-\alpha_{2i}) z_i+g_i(z)},\eea
where $g_i(z)=g(z_1,\ldots, z_{i-1},z_{i+1},\ldots,z_m)$ is an entire function.
Clearly $(\alpha_{1i}-\alpha_{2i}) z_i+g_i(z)$ is non-constant entire function and so $p_2(z)e^{(\alpha_{1i}-\alpha_{2i}) z_i+g_i(z)}$ is a transcendental entire function. Therefore from (\ref{xx}), we get a contradiction. Hence 
\[(p_1 \alpha_{1i}+\partial_{z_i}(p_1)) p_2-(p_2 \alpha_{2i}+\partial_{z_i}(p_2)) p_1\not\equiv 0\]
and so $\beta_{1i}\not\equiv 0$ for any $i\in\mathbb{Z}[1,m]$.

\smallskip
Again applying the operator $\partial_{z_j}$ on the both sides of (\ref{dd4}), we have
\bea\label{dd9}&& \partial_{z_j}\big(p_1 \alpha_{1i}+\partial_{z_i}(p_1)\big)f^n+
n\big((p_1\alpha_{1i}+\partial_{z_i}(p_1))\partial_{z_j}(f)-\partial_{z_j}(p_1)\partial_{z_i}(f)
-p_1\partial^2_{z_j z_i}(f)\big)f^{n-1}\nonumber\\&&
-n(n-1) p_1 f^{n-2}\partial_{z_j}(f)\partial_{z_i}(f)
+\partial_{z_j}(Q_{i,d}(f))=(\partial_{z_j}(\beta_{1i})+\beta_{1i}\alpha_{2j})e^{\langle \beta, \ol z\rangle},\eea
for any $j\in\mathbb{Z}[1,m]$. Eliminating $e^{\langle \beta, \ol z\rangle}$ from (\ref{dd4}) and (\ref{dd9}), we get
\bea\label{dd10} f^{n-2}\left\lbrace \gamma_{ji} f^2-n\xi_{ji} f+n(n-1) p_1 \beta_{1i}\partial_{z_j}(f)\partial_{z_i}(f)+n p_1\beta_{1i} f \partial^2_{z_j z_i}(f)\right\rbrace=T_{j,i,d}(f),\eea
where $\partial_{z_j z_i}^2(f)=\partial_{z_j}(\partial_{z_i}(f))$,
\[\gamma_{ji}=(\partial_{z_j}(\beta_{1i})+\beta_{1i}\alpha_{2j})(\partial_{z_i}(p_1)+p_1\alpha_{1i})-\beta_{1i}\partial_{z_j}(\partial_{z_i}(p_1)+p_1 \alpha_{1i}\big),\]
\[\xi_{ji}=p_1(\partial_{z_j}(\beta_{1i})+\alpha_{2j} \beta_{1i})\partial_{z_i}(f)+\beta_{1i}(p_1\alpha_{1i}+\partial_{z_i}(p_1))\partial_{z_j}(f)-\partial_{z_j}(p_1)\partial_{z_i}(f))\]
and
\bea\label{dd11} T_{j,i,d}(f)=\beta_{1i}\partial_{z_j}(Q_{i,d}(f))-\big(\partial_{z_j}(\beta_{1i})+\beta_{1i} \alpha_{2j}\big) Q_{i,d}(f).\eea

Set
\bea\label{dd12}\varphi_{ji}= \gamma_{ji} f^2-n\xi_{ji} f+n(n-1) p_1 \beta_{1i}\partial_{z_j}(f)\partial_{z_i}(f)+n p_1\beta_{1i} f \partial^2_{z_j z_i}(f).\eea
 
If possible, suppose $\varphi_{ji}\not\equiv 0$. Now in view of (\ref{dd12}) and using Lemma \ref{L.4} to (\ref{dd10}), we get
$\parallel m(r, \varphi_{ji})=O(\log r)$. Therefore
\bea\label{dd13} \parallel\;T(r,\varphi_{ji})=O(\log r).\eea

Again from (\ref{dd10}), we have $f^{n-3}(\varphi_{ji} f)=T_{j,i,d}(f)$ and so by Lemma \ref{L.4}, we get $\parallel m(r, f\varphi_{ji})=O(\log r)$. Since $f$ is entire, from (\ref{dd13}), we have $\parallel T(r, f\varphi_{ji})=O(\log r)$. Therefore
\[\parallel\;T(r,f)\leq T(r, \varphi_{ji})+T(r, f\varphi_{ji})+O(1)=O(\log r),\]
which is absurd. Hence $\varphi_{ji}\equiv 0$ and so from (\ref{dd10}), we get $T_{j,i,d}(f)\equiv 0$. Therefore (\ref{dd11}) gives
\bea\label{dd14}\beta_{1i}\partial_{z_j}(Q_{i,d}(f))-\big(\partial_{z_j}(\beta_{1i})+\beta_{1i} \alpha_{2j}\big) Q_{i,d}(f)\equiv 0\eea
for any $i,j\in\mathbb{Z}[1,m]$.
Now we consider following two cases.\par

\smallskip
{\bf Case 1.} Let $Q_{i,d}(f)\not\equiv 0$ for all $i\in\mathbb{Z}[1,m]$. Clearly from (\ref{dd7}), we have $P_d(f)\not\equiv 0$. Let 
$F_{1i}=Q_{i,d}(f)$ and $G_{1i}=\beta_{1i}e^{\langle \beta, \ol z\rangle}$.
If possible, suppose $F_{1i}$ and $G_{1i}$ are linearly independent. Then by Corollary 1.40 \cite{HLY}, there exists $j\in \mathbb{Z}[1, m]$ such that $W(F_{1i},G_{1i})\not\equiv 0$.
This implies that 
\[\beta_{1i}\partial_{z_j}(Q_{i,d}(f))-(\partial_{z_j}(\beta_{1i})+\beta_{1i} \alpha_{2j})Q_{i,d}(f)\not\equiv 0,\]
which contradicts (\ref{dd14}). Hence $F_{1i}$ and $G_{1i}$ are linearly dependent. Then there exist a non-zero constants $c_{1i}$ such that $F_{1i}=c_{1i}G_{1i}$, i.e., $Q_{i,d}(f)=c_{1i}\beta_{1i}e^{\langle \beta, \ol z\rangle}$ and so (\ref{dd4}) gives
\bea\label{dd16} f^{n-1}\left\{\left(p_1 \alpha_{1i}+\partial_{z_i}(p_1)\right)f-np_1\partial_{z_i}(f)\right\}=-(1-1/c_{1i}) Q_{i,d}(f).\eea

If $(p_1 \alpha_{1i}+\partial_{z_i}(p_1))f-np_1 \partial_{z_i}(f)\not\equiv 0$ for some $i\in\mathbb{Z}[1,m]$, then using Lemma \ref{L.4} to (\ref{dd16}), we get 
\[\parallel\;m\big(r,\big(p_1 \alpha_{1i}+\partial_{z_i}(p_1)\big) f-n p_1 \partial_{z_i}(f)\big)=O(\log r).\]

Now proceeding with the same arguments as done above, we get a contradiction. Hence
\bea\label{dd17}\big(p_1 \alpha_{1i}+\partial_{z_i}(p_1)\big) f-n p_1\partial_{z_i}(f)\equiv 0\eea
for any $i\in\mathbb{Z}[1,m]$. Let $F_1=f^n$ and $G_1=p_1e^{\langle \alpha,\ol z\rangle}$.
If $F_1$ and $G_1$ are linearly independent, then by Corollary 1.40 \cite{HLY}, there exists $j\in \mathbb{Z}[1, m]$ such that $W(F_1,G_1)\not\equiv 0$ and so
\beas (p_1\alpha_{1j}+\partial_{z_j}(p_1))f-np_1\partial_{z_j}(f)\not\equiv 0,\eeas
which contradicts (\ref{dd17}). Hence $F_1$ and $G_1$ are linearly dependent. Then there exists $c_2\in\mathbb{C}\backslash \{0\}$ such that $F_1=c_2G_1$ and so
\bea\label{dd18} f^n(z)=c_2 p_1(z)e^{\langle \alpha,\ol z\rangle},\;\;\text{i.e.,}\;\;f(z)=cq_1(z)e^{\langle \alpha, \ol z\rangle/n},\eea
where $c^n=c_2$ and $q_1^n=p_1$. Since $P_d(f)$ is an algebraic differential polynomial of degree $d$, using (\ref{dd18}), we may assume that
\bea\label{dd18a} P_d(f(z))=q_{1d}(z)e^{d\langle \alpha, \ol z\rangle/n}+\ldots+q_{11}(z)e^{\langle \alpha, \ol z\rangle/n}+q_{10}(z),\eea
where $q_{1d},\ldots, q_{11}, q_{10}$ are polynomials in $\mathbb{C}^m$. Since $P_d(f)\not\equiv 0$, for the sake of simplicity we may assume that $q_{1d}\not\equiv 0$.
Now using (\ref{dd18}) and (\ref{dd18a}) to (\ref{dd2}), we get
\bea\label{dd18b} (c_2-1) p_1(z)g^n(z)+q_{1d}(z)g^d(z)+\ldots+q_{11}(z)g(z)+q_{10}(z)=p_2(z)e^{\langle \beta, \ol z\rangle},\eea
where $g(z)=e^{\langle \alpha,\ol z\rangle/n}$.
We now divide following two sub-cases.\par

\smallskip
{\bf Sub-case 1.1.} Let $c_2\neq 1$. We define
\beas \tilde F(z)=(c_2-1) p_1(z)g^n(z)+q_{1d}(z)g^d(z)+\ldots+q_{11}(z)g(z).\eeas

If possible suppose that $q_{10}\not\equiv 0$. Then from (\ref{dd18b}), we have 
\[N\left(r,-1,\tilde F/q_{10}\right)=O(\log r).\]

Therefore using first main theorem, Lemmas \ref{L.2} and \ref{L.3}, we get
\beas nT(r,g)&=&T\left(r,\tilde F/q_{10}\right)\\&\leq& \ol N\left(r,\tilde F/q_{10}\right)+\ol N\left(r,0,\tilde F/q_{10}\right)+\ol N\left(r,-1,\tilde F/q_{10}\right)+o(T(r,g))\\&\leq &
\ol N\left(r,0,\tilde F/q_{10}\right)+o(T(r,g))\\&\leq&
\ol N\left(r,0,(c_2-1) p_1(z)g^{n-1}(z)+q_{1d}(z)g^{d-1}(z)+\ldots+q_{11}(z)\right)+o(T(r,g))\\&\leq& (n-1)T(r,g)+o(T(r,g)),
 \eeas
which is impossible. Hence $q_{10}\equiv 0$. Therefore from (\ref{dd18b}), we have 
\beas g(z)\left((c_2-1) p_1(z)g^{n-1}(z)+q_{1d}(z)g^{d-1}(z)+\ldots+q_{11}(z)\right)=p_2(z)e^{\langle \beta,\ol z\rangle}.\eeas

Now proceeding step by step as done above, one can easily prove that $q_{11}\equiv 0$, $\ldots$, $q_{1d}\equiv 0$. 
Since $q_{1d}\not\equiv 0$, we ultimately get a contradiction.

\smallskip
{\bf Sub-case 1.2.} Let $c_2=1$. Then from (\ref{dd18b}), we have 
\bea\label{dd18c} q_{1d}(z)g^d(z)+\ldots+q_{11}(z)g(z)+q_{10}(z)=p_2(z)e^{\langle \beta,\ol z\rangle}.\eea

In this case also, proceeding step by step as done above, one can easily prove that $q_{10}\equiv 0$, $q_{11}\equiv 0$, $\ldots$, $q_{1d-1}\equiv 0$. Thereby from (\ref{dd18c}), we have 
\beas \exp\left(\sideset{}{_{k=1}^{m}}{\sum} \left(d\alpha_{1k}/n-\alpha_{2k}\right) z_k\right)=p_2(z)/q_{1d}(z),\eeas
from which, we get $d\alpha_{1k}=n\alpha_{2k}$, $k\in\mathbb{Z}[1,m]$ and $p_2/q_{1d}\in\mathbb{C}\backslash \{0\}$. So we get a contradiction.\par

\smallskip
{\bf Case 2.} Let $Q_{i,d}(f)\equiv 0$ for some $i\in\mathbb{Z}[1,m]$. For the sake of simplicity, we may assume that $Q_{k,d}(f)\equiv 0$. Then from (\ref{dd7}), we have
\bea\label{dd21}\big(p_1\alpha_{1k}+\partial_{z_k}(p_1)\big)P_d(f)-p_1\partial_{z_k}(P_d(f))\equiv 0.\eea

Now we divide following sub-cases.\par

\smallskip
{\bf Sub-case 2.1.} Let $P_d(f)\equiv 0$. Then from (\ref{dd2}), we have 
\bea\label{dd18d} a(z)\left(f(z)\exp(-\langle \beta,\ol z\rangle/n\right)^{m_1}+b(z)\left(\exp(\langle \alpha-\beta,\ol z\rangle/m_2)\right)^{m_2}=1,\eea
where $a=1/p_2$, $b=-p_1/p_2$, $m_1=n\geq 4$ and $m_2=3$. 
By the given condition, $\langle \alpha-\beta,\ol z\rangle$ is a non-constant polynomial and so $f_2(z)=\exp(\langle \alpha-\beta,\ol z\rangle/m_2)$ is transcendental. If 
\[f_1(z)=f^n(z)e^{-\langle \beta,\ol z\rangle}=p_3(z),\]
where $p_3$ is a non-zero polynomial in $\mathbb{C}^m$, then (\ref{dd18d}) gives $p_1f_2^{m_2}=p_3^{m_1}-p_2$.
Since $p_1$, $p_2$ and $p_3$ are non-zero polynomials and $f_2$ is a transcendental entire function, we get a contradiction. So $f_1$ is a transcendental entire function. Now from (\ref{dd18d}), we get
\bea\label{exx2} af_1^{m_1}+bf_2^{m_2}=1,\eea
where $m_1=n\geq 4$ and $m_2=3$. Now using Lemma \ref{L.6} to (\ref{exx2}), we get a contradiction.\par

\smallskip
{\bf Sub-case 2.2.} Let $P_d(f)\not\equiv 0$. Then from (\ref{dd4}), we have 
\bea\label{exx3} \big(p_1\alpha_{1k}+\partial_{z_k}(p_1)\big)f^n-np_1f^{n-1}\partial_{z_k}(f)=\beta_{1k} e^{\langle \beta,\ol z\rangle}.\eea

Now from (\ref{exx3}), it is easy to prove that the set of zeros of $f$ is algebraic. Therefore from (\ref{exx3}), we may assume that $f(z)=p(z)e^{\langle \beta,\ol z\rangle/n}$, where $p(z)$ is a non-zero polynomial in $\mathbb{C}^m$. Substituting $f(z)=p(z)e^{\langle \beta,\ol z\rangle/n}$ into (\ref{dd2}), we get
\bea\label{exx4} (p^n-p_2)e^{\langle \beta,\ol z\rangle}+\sideset{}{_{s=0}^{d}}{\sum}a_{s}e^{s\langle \beta,\ol z\rangle/n}=p_1e^{\langle \alpha,\ol z\rangle},\eea
where $a_{s}\;(s=0,1,\ldots, d)$ are polynomials functions in $\mathbb{C}^m$. Since $P_d(f)\not\equiv 0$, for the sake of simplicity we may assume that $a_d\not\equiv 0$. Now we consider following sub-cases.

\smallskip
{\bf Sub-case 2.2.1.} Let $p^n\not\equiv p_2$. Now proceeding step by step as done in the proof of Sub-case 1.1, one can easily prove that $a_0\equiv 0$, $a_1\equiv 0,\ldots, a_d\equiv 0$. Since $a_d\not\equiv 0$, we get a contradiction.

\smallskip
{\bf Sub-case 2.2.2.} Let $p^n\equiv p_2$. Then from (\ref{exx4}), we get
\bea\label{exx5} \sideset{}{_{s=0}^{d}}{\sum}a_{s}e^{s\langle \beta,\ol z\rangle/n}=p_1e^{\langle \alpha,\ol z\rangle},\eea

In this case also, proceeding step by step as done in the proof of Sub-case 1.2, one can easily prove that $a_{0}\equiv 0$, $a_{1}\equiv 0$, $\ldots$, $a_{d-1}\equiv 0$. Thereby from (\ref{exx5}), we have 
\beas \exp\left(\sideset{}{_{i=1}^{m}}{\sum} \left(d\alpha_{2i}/n-\alpha_{1i}\right) z_i\right)=p_1(z)/a_{d}(z),\eeas
from which, we get $d\alpha_{2i}=n\alpha_{1i}$, $i\in\mathbb{Z}[1,m]$ and $p_1/a_{d}\in\mathbb{C}\backslash \{0\}$. So we get a contradiction.\par
%Set
%$F_2=P_d(f)$ and $G_2=p_1e^{\langle \alpha,\ol z\rangle}$.
%If possible, suppose $F_2$ and $G_2$ are linearly independent. Then by Corollary 1.40 \cite{HLY}, there exists $j\in \mathbb{Z}[1, m]$ such that $W(F_2,G_2)\not\equiv 0$, which contradicts (\ref{dd21}). Hence $F_2$ and $G_2$ are linearly dependent and so there exists a $c_3\in\mathbb{C}\backslash \{0\}$ such that
%\bea\label{dd22} P_d(f(z))=c_3 p_1(z)e^{\langle \alpha,\ol z\rangle}.\eea
%Now from (\ref{dd2}) and (\ref{dd22}), we have
%\bea\label{dd23} f^n(z)+(1-1/c_3) P_d(f(z))=p_2(z)e^{\langle \beta,\ol z\rangle}.\eea
%If we take $g(z)=p_2(z)e^{\langle \beta,\ol z\rangle}$, then $\parallel N(r,f)+N(r,0;g)=o(T(r,f))$ and so using Lemma \ref{L.5} to (\ref{dd22}), we get
%\bea\label{dd23a} p_2(z)e^{\langle \beta,\ol z\rangle}=(f+a/n)^n,\eea
%where $a\in S(f)$ is a meromorphic function in $\mathbb{C}^m$ determined by the terms of degree $n-1$ in $P_{d}(f)$ and by $g$. Now from (\ref{dd23}) and (\ref{dd23a}), we have
%\bea\label{dd23b} (1-1/c_3) P_d(f)=af^{n-1}+\ldots+af+(a/n)^n.\eea
%Since $P_d(f)$ is a differential polynomial of degree $d\leq n-3$, from (\ref{dd23b}), one can easily conclude that $c_3=1$ and $a\equiv 0$. Then (\ref{dd22}) and (\ref{dd23}), we have respectively 
%\[P_d(f(z))=p_1(z)e^{\langle \alpha,\ol z\rangle}\;\;\text{and}\;\;f^n(z)=p_2(z)e^{\langle \beta,\ol z\rangle}.\]
%Now proceeding similarly as done in the proof of Sub-case 1.2, we get a contradiction.
Hence the proof.

\section {{\bf Proof of Theorem \ref{t2}}}
Let $f$ be a transcendental entire solution of (\ref{dd2}). Clearly by Lemma \ref{L.8}, we have $\rho(f)<+\infty$. Also (\ref{dd3})-(\ref{dd13}) still hold.
We now consider following two cases.\par

\smallskip
{\bf Case 1.} Let $\varphi_{ji}\not\equiv 0$ for some $i,j\in\mathbb{Z}[1,m]$. Now dividing (\ref{dd12}) by $f^2$ and then using Lemma \ref{L.1}, we get $\parallel m\left(r,1/f^2\right)=O(\log r)$, i.e.,
\bea\label{ds27}\parallel m\left(r,1/f\right)=O(\log r).\eea

Since $\alpha_{1j}/\alpha_{2j}=s/t, j=1,2,\ldots,m$, it follows from (\ref{dd4}) and (\ref{dd5}) that
\bea\label{ds28}
&& f^{s(n-1)}\left(\psi_{ii}+Q_{i,d}(f)/f^{n-1}\right)^s \nonumber\\
&& =(-1)^t \beta_{1i}^{s-t}\left(\left(\partial_{z_i}(p_2)+p_2\alpha_{2i}\right)f^n-n p_2 f^{n-1}\partial_{z_i}(f)+R_{i,d}(f)\right)^t,\eea
where 
\bea\label{ds28a} \psi_{1i}=\left(\partial_{z_i}(p_1)+p_1 \alpha_{1i}\right) f-n p_1\partial_{z_i}(f).\eea

Now we consider following sub-cases.

\smallskip
{\bf Sub-case 1.1.} Let $\psi_{1i}\equiv 0$. Then from (\ref{ds28a}), we get $f^n(z)=p_1(z)e^{\alpha_{1i} z_i+h_{i}(z)}$, where $h_i(z)=h(z_1,\ldots, z_{i-1},z_{i+1},\ldots,z_m)$ is a polynomial and so we may assume that $f(z)=p(z)e^{(\alpha_{1i} z_i+h_{i}(z))/n}$, where $p(z)$ is a non-zero polynomial in $\mathbb{C}^m$. Clearly $\alpha_{1i} z_i+h_{i}(z)$ is a non-constant polynomial. On the other hand from (\ref{dd4}), we get 
\bea\label{ds28b} Q_{i,d}(f(z))=\beta_{1i}(z)e^{\langle \beta,\ol z\rangle}\not\equiv 0.\eea

Substituting $f(z)=p(z)e^{(\alpha_{1i} z_i+h_{i}(z))/n}$ into (\ref{ds28b}), we get
\bea\label{ds28c} \sideset{}{_{s=0}^{d}}{\sum}a_{si}(z)e^{s(\alpha_{1i} z_i+h_{i}(z))/n}=\beta_{1i}(z)e^{\langle \beta,\ol z\rangle},\eea
where $a_{si}(z)\;(s=0,1,\ldots, d)$ are polynomials functions in $\mathbb{C}^m$. Since $Q_{i,d}(f)\not\equiv 0$, we may assume that $a_{di}\not\equiv 0$. Now proceeding step by step as done in the proof of Sub-case 1.2 of Theorem \ref{t1}, we can prove that $a_{0i}\equiv 0$, $a_{1i}\equiv 0$, $\ldots$, $a_{d-1i}\equiv 0$. Then (\ref{ds28c}) gives 
\beas a_{di}e^{d(\alpha_{1i} z_i+h_{i}(z))/n}=\beta_{1i}(z)e^{\langle \beta,\ol z\rangle},\eeas
from which, we deduce that $\deg(h_{i})=1$, $d\alpha_{1i}=n\alpha_{2i}$ and $\beta_{1i}/a_{di}\in\mathbb{C}\backslash \{0\}$. Since $d\alpha_{1i}=n\alpha_{2i}$, we get a contradiction.\par

\smallskip
{\bf Sub-case 1.2.} Let $\psi_{1i}\not\equiv 0$. Now using Lemma \ref{L.4} to (\ref{ds28}), we get
\bea\label{ds29} \parallel m\left(r, \psi_{1i}+Q_{i,d}(f)/f^{n-1}\right)=O(\log r).\eea

Since $Q_{i,d}(f)$ is an algebraic differential polynomial of degree $d\leq n-2$, using (\ref{ds27}), we deduce that $\parallel m\left(r, Q_{i,d}(f)/f^{n-1}\right)=O(\log r)$. Consequently $\parallel m(r,\psi_{1i})=O(\log r)$ and so
\bea\label{ds30}\parallel\;T(r, \psi_{1i})=O(\log r).\eea

Also from (\ref{ds28a}), we have
\bea\label{ds31} \partial_{z_i}(f)=\frac{\partial_{z_i}(p_1)+p_1 \alpha_{1i}}{n p_1} f+\frac{\psi_{1i}}{n p_1}.\eea

Now using (\ref{ds31}) and (\ref{dd4}), we get 
\bea\label{ds32}\psi_{1i}f^{n-1}+Q_{i,d}(f)=\beta_{1i}e^{\langle \beta,\ol z\rangle}.\eea

Applying Lemma \ref{L.5} to (\ref{ds32}), we have 
\bea\label{ds33}\beta_{1i}e^{\langle \beta,\ol z\rangle}=\psi_{1i} f^{n-1}+Q_{i,d}(f)=\psi_{1i}\left(f+\beta_{2i}/(n-1)\right)^{n-1},\eea
where $\beta_{2i}$ is a meromorphic function such that $\parallel T\left(r,\beta_{2i}\right)=o(T(r,f))$.
If $\beta_{2i}\equiv 0$, then from (\ref{ds33}), we get $Q_{i,d}(f)\equiv 0$ and so proceeding similarly as done in the proof of Case 2 of Theorem \ref{t1}, we get a contradiction. Hence $\beta_{2i}\not\equiv 0$ and so from (\ref{dd6}) and (\ref{ds33}), we get
\bea\label{ds33a} \ol N\left(r,0,f+\beta_{2i}/(n-1)\right)\leq N\left(r,0,\beta_{1i}\right)=O(\log r).\eea

Now using (\ref{ds32}) to (\ref{dd2}), we get
\bea\label{ds34} p_1e^{\langle \alpha,\ol z\rangle}=f^n+P_d(f)-\frac{p_2 \psi_{1i}}{\beta_{1i}} f^{n-1}-\frac{p_2}{\beta_{1i}} Q_{i,d}(f).\eea

Again using Lemma \ref{L.5} to (\ref{ds34}), we have
\bea\label{ds35} p_1e^{\langle \alpha,\ol z\rangle}=f^n+P_d(f)-\frac{p_2\psi_{1i}}{\beta_{1i}} f^{n-1}-\frac{p_2}{\beta_{1i}} Q_{i,d}(f)=\left(f+\beta_{3i}/n\right)^n,\eea
where $\beta_{3i}$ is a meromorphic function such that $\parallel T(r,\beta_{3i})=o(T(r, f))$.
Clearly from (\ref{ds35}), we have $\beta_{3i}=-\left(p_2 \psi_{1i}\right)/\beta_{1i}$ and
\bea\label{ds33b} \ol N\left(r,0,f+\beta_{3i}/n\right)\leq O(\log r).\eea

If $\beta_{2i}/(n-1)\not\equiv\beta_{3i}/n$, then using (\ref{ds33a}) and (\ref{ds33b}), we conclude from Lemma \ref{L.2} that
$\parallel T(r,f)\leq o(T(r,f))$, which is impossible. Hence $\beta_{2i}/(n-1)\equiv \beta_{3i}/n$. Therefore from (\ref{ds33}) and (\ref{ds35}), we have respectively
\bea\label{ds37}\psi_{1i}(z)\left(f(z)-p_2(z)\psi_{1i}(z)/n\beta_{1i}(z)\right)^{n-1}=\beta_{1i}(z)e^{\langle \beta,\ol z\rangle}\eea
and
\bea\label{ds38}\left(f(z)-p_2(z)\psi_{1i}(z)/n\beta_{1i}(z)\right)^n=p_1(z)e^{\langle \alpha,\ol z\rangle}.\eea

Clearly from (\ref{ds37}) and (\ref{ds38}), one can easily get $\alpha_{1k}/\alpha_{2k}=n/(n-1)$ and
\bea\label{ds39} f(z)=\frac{p_2(z)\psi_{1i}(z)}{n\beta_{1i}(z)}+q(z)e^{\langle \alpha,\ol z\rangle/n},\eea
where $q(z)$ is a polynomial in $\mathbb{C}^m$. Now using (\ref{ds38}) to (\ref{ds31}), we have
\beas\label{ds40} &&\frac{\partial_{z_i}(p_1(z))+p_1(z)\alpha_{1i}}{np_1(z)}q(z)e^{\langle \alpha,\ol z\rangle/n}+\frac{\partial_{z_i}(p_1(z))+p_1(z)\alpha_{1i}}{np_1(z)} \cdot \frac{p_2(z)\psi_{1i}(z)}{n\beta_{1i}(z)}+\frac{\psi_{1i}(z)}{np_1(z)}\nonumber \\ &&=\left(\partial_{z_i}(q(z))+q(z)\alpha_{1i}/n\right)e^{\langle \alpha,\ol z\rangle/n}+\partial_{z_i}\left(p_2(z)\psi_{1i}(z)/n\beta_{1i}(z)\right)\eeas
and so
\bea\label{ds40} \frac{\partial_{z_i}(p_1)+p_1 \alpha_{1i}}{n p_1} \cdot \frac{p_2 \psi_{1i}}{n\beta_{1i}}+\frac{\psi_{1i}}{n p_1}=\partial_{z_i}\left(\frac{p_2 \psi_{1i}}{n \beta_{1i}}\right).\eea

Then using (\ref{dd6}) to (\ref{ds40}), we get
\bea\label{ds41}n\partial_{z_i}\left(p_2 \psi_{1i}/\beta_{1i}\right) =\left((n+1)\partial_{z_i}(p_1)/p_1-n\partial_{z_i}(p_2)/p_2+2 \alpha_{1i}\right)\left(p_2\psi_{1i}\beta_{1i}\right).\eea

Now from (\ref{ds41}), we get
\bea\label{ds42} (p_2^n/p_1^{n+1})\left(p_2 \psi_{1i}/\beta_{1i}\right)^n=e^{2\alpha_{1i}z_i+g_i(z)},\eea
where $g_i(z)=g(z_1,\ldots, z_{i-1},z_{i+1},\ldots,z_m)$ is a polynomial.
Clearly $2\alpha_{1i} z_i+g_i(z)$ is non-constant polynomial and so $e^{2\alpha_{1i}z_k+g_i(z)}$ is a transcendental entire function. Note that $p_1$, $p_2$ and $\beta_{1i}\;(i=1,2,\ldots,m)$ are non-zero polynomials. Therefore using (\ref{ds30}) to (\ref{ds42}), we get  
\[\parallel T(r,e^{2\alpha_{1i}z_i+g_i(z)})=O(\log r),\]
which contradicts the fact that $e^{2\alpha_{1i}z_i+g_i(z)}$ is transcendental.

\smallskip
{\bf Case 2.} Let $\varphi_{ji}\equiv 0$ for all $i,j\in\mathbb{Z}[1,m]$. Now from (\ref{dd10}) and (\ref{dd11}), we get
\beas\label{ds26}\beta_{1i} \partial_{z_i}(Q_{i,d}(f))-\left(\partial_{z_i}(\beta_{1i})+\beta_{1i}\alpha_2\right)Q_{i,d}(f)=0.\eeas

For $Q_{i,d}(f)\not \equiv 0$, we follow the proof of Case 1 of Theorem \ref{t1} to get a contradiction and for $Q_{i,d}(f)\equiv 0$, we follow Case 2 of Theorem \ref{t1} to get a contradiction.
Hence the proof.

\section {{\bf Proof of Theorem \ref{t3}}}
Let $f$ be a transcendental entire solution of (\ref{dt41}). Clearly by Lemma \ref{L.8}, we have $\rho(f)<+\infty$. Applying the operator $\partial_{z_i}$ on the both sides of (\ref{dt41}), we have
\bea\label{dt43}\lambda_i^{-1}\left(3f^2\partial_{z_i}(f)+\partial_{z_i}\left(\partial_u^l(f)\right)\right)=p_1e^{\langle \lambda, \ol z\rangle}-p_2e^{-\langle \lambda, \ol z\rangle}\eea
for all $i\in\mathbb{Z}[1,m]$. Subtracting the square of (\ref{dt43}) from the square of (\ref{dt41}), we get
\bea\label{dt44}f^4a_i=Q_i(f),\eea
where 
\bea\label{dt46} a_i=\lambda_i^2 f^2-9(\partial_{z_i}(f))^2\eea 
and
\[Q_i(f)=4\lambda_i^2 p_1 p_2-2\lambda_i^2f^3\partial_u^l(f)-\lambda_i^2\left(\partial_u^l(f)\right)^2+6f^2\partial_{z_i}(f)\partial_{z_i}\left(\partial_u^l(f)\right)+(\partial_u^l(f))^2,\]
where $i=1,2,\ldots,m$. Now we divide following two cases.\par

\smallskip
{\bf Case 1.} Let $a_i\equiv 0$ for any $i\in\mathbb{Z}[1,m]$. Then from (\ref{dt46}), we get
\bea\label{dt45} \partial_{z_i}(f)\equiv \pm \lambda_i f/3\eea
holds for any $i\in\mathbb{Z}[1,m]$. Then from (\ref{dt45}), we conclude that $f$ and $\partial_{z_i}(f)$ both have no zeros for any $i\in\mathbb{Z}[1,m]$. If we take $f=e^{\xi},$ where $\xi$ is a non-constant entire function in $\mathbb{C}^m$, then from (\ref{dt45}), we get $\partial_{z_i}(\xi)=\pm \lambda_i/3$ for any $i\in\mathbb{Z}[1,m]$.
By Taylor expansion, we have
\bea\label{bm} \xi(z)=\sum\limits_{\mu_1,\ldots,\mu_m=0}^{\infty} b_{\mu_1\ldots\mu_m} z_1^{\mu_1}\ldots z_m^{\mu_m},\eea
where the coefficient $b_{\mu_1\ldots\mu_m}$ is given by
\bea\label{bma} b_{\mu_1\ldots\mu_m}=\frac{1}{\mu_1!\ldots \mu_m!}\frac{\partial^{|I|}\xi(0,0,\ldots,0)}{\partial z_1^{\mu_1}\cdots \partial z_m^{\mu_m}}.\eea

Now from (\ref{bm}) and (\ref{bma}), we have $\xi(z)=A_0\pm\langle \lambda, \ol z\rangle/3$, where $A_0=\alpha(0,0,\ldots,0)$. Therefore
\bea\label{dt45b} f(z_1,\ldots,z_m)=Ae^{\pm\langle \lambda, \ol z\rangle/3},\eea
where $A=\exp(A_0)$. Now using (\ref{dt45b}) (taking positive sign) to (\ref{dt41}), we get
\beas (A^3-p_1)e^{2\langle \lambda, \ol z\rangle}+A\left(\sideset{}{_{k=1}^m}{\sum} u_k\lambda_k/3\right)^le^{4\langle \lambda, \ol z\rangle/3}=p_2,\eeas
which is impossible by Lemma \ref{L.3}. Similarly we get a contradiction if we take negative sign.\par

\smallskip
{\bf Case 2.} Let $a_i\not\equiv 0$ for some $i\in\mathbb{Z}[1,m]$. For the sake of simplicity, we may assume that $a_j\equiv 0$ and $a_k\not\equiv 0$. Then from (\ref{dt46}), we have $f=\pm (\lambda_j/3)\partial_{z_j}(f)$. Taking positive sign, we get $f(z)=e^{(\lambda_j/3)z_j+g_j(z)}$, where $g_j(z)=g(z_1,z_2,\ldots,z_{j-1},z_{j+1},\ldots,z_m)$ is a polynomial. Note that 
\bea\label{ps}f^4a_k=Q_k(f),\eea
where $a_k=\lambda_k^2 f^2-9(\partial_{z_k}(f))^2\not\equiv 0$ and so $Q_k(f)\not\equiv 0$. Substituting $f(z)=e^{(\lambda_j/3)z_j+g_j(z)}$ into (\ref{ps}), we get 
\bea\label{pps} b_6(z)e^{6(\lambda_j/3)z_j+6g_j(z)}+b_5(z)e^{5(\lambda_j/3)z_j+5g_j(z)}+\ldots+b_1(z)e^{(\lambda_j/3)z_j+g_j(z)}+b_0(z)=0,\eea
where $b_i\;(i=0,1,2,\ldots,6)$ are polynomials in $\mathbb{C}^m$ such that $b_6\not\equiv 0$. Now using Lemma \ref{L.2} to (\ref{pps}), we get a contradiction.
Therefore we conclude that $a_i\not\equiv 0$ for all $i\in\mathbb{Z}[1,m]$. Now using Lemma \ref{L.4} to (\ref{dt44}), we get $m(r,a_i)=O(\log r)$ and so $a_i$ is a polynomial for any $i\in\mathbb{Z}[1,m]$. We claim that $\partial_{z_j}(a_i)\equiv 0$ for any $i,j\in\mathbb{Z}[1,m]$. If not, suppose $\partial_{z_j}(a_i)\not\equiv 0$ for any $i,j\in\mathbb{Z}[1,m]$. Applying the operator $\partial_{z_i}$ on the both sides of (\ref{dt46}), we get
\bea\label{bm1} \partial_{z_i}(f)(2\lambda_i^2 f-18\partial^2_{z_i}(f))=\partial_{z_i}(a_i)\not\equiv 0.\eea

Clearly (\ref{bm1}) gives $N(r,0;1/\partial_{z_i}(f))=o(T(r,f))$. Now applying Lemmas \ref{L.2} and \ref{L.3} to (\ref{dt46}), we get
\beas 2T(r,f)&=&T(r,\lambda_i^2 f^2/a_i)+o(T(r,f))\\&\leq& \ol N(r,\lambda_i^2 f^2/a_i)+\ol N(r,0;\lambda_i^2 f^2/a_i)+\ol N(r,1;\lambda_i^2 f^2/a_i)+o(T(r,f))\\&\leq &
N(r,0;f)+N(r,0;1/\partial_{z_i}(f))+o(T(r,f))\leq T(r,f)+o(T(r,f)),
\eeas
which is impossible. Hence $\partial_{z_i}(a_i)\equiv 0$ and so from (\ref{bm1}), we get
\bea\label{bm2} f=(9/\lambda_i^2)\partial^2_{z_i}(f)\eea
holds for any $i\in\mathbb{Z}[1,m]$. We see that $f(z)=ce^{\langle \lambda, \ol z\rangle/3}+de^{-\langle \lambda, \ol z\rangle/3}$
is a solution of (\ref{bm2}), where $c, d\in\mathbb{C}\backslash \{0\}$. Obviously this function $f$ satisfies the following equation
\bea\label{bm3} \lambda_i^2(\partial_{z_j}(f(z)))^2-\lambda_j^2(\partial_{z_i}(f(z)))^2\equiv 0\eea
for all $z\in\mathbb{C}^m$. Now applying the operator $\partial_{z_j}\;(i\neq j)$ on the both sides of (\ref{dt46}), we get
\beas 2\lambda_i^2 f\partial_{z_j}(f)-18\partial_{z_i}(f)\partial_{z_j}(\partial_{z_i}(f))=\partial_{z_j}(a_i)\eeas
and so from (\ref{bm2}), we see that the function $f(z)=ce^{\langle \lambda, \ol z\rangle/3}+de^{-\langle \lambda, \ol z\rangle/3}$  also satisfies the Eq.
\bea\label{bm4} \partial_{z_j}\left(\lambda_i^2(\partial_{z_j}(f))^2-\lambda_j^2(\partial_{z_i}(f))^2\right)=\partial_{z_j}(\lambda_j^2 a_i/9),\eea
for any $i,j\in\mathbb{Z}[1,m]$, $i\neq j$. Therefore from (\ref{bm3}) and (\ref{bm4}), we have $\partial_{z_j}(\lambda_j^2 a_i(z)/9)=0$ for all $z\in\mathbb{C}^m$, i.e., $\partial_{z_j}(a_i)\equiv 0$ for any $i,j\in\mathbb{Z}[1,m]$, $i\neq j$. Finally $\partial_{z_j}(a_i)\equiv 0$ and so $a_i$ is a constant, where $i\in \mathbb{Z}[1,m]$. 
Now from (\ref{dt46}), we see that the functions $\lambda_i f+3\partial_{z_i}(f)$ and $(\lambda_i f-3\partial_{z_i}(f))/a_i$ are of finite order having no zeros. Therefore we may assume that  
\bea\label{xx1} \lambda_i f+3\partial_{z_i}(f)=c_ie^{\xi_i}\;\;\text{and}\;\;\lambda_i f-3\partial_{z_i}(f)=d_ie^{-\xi_i},\eea
where $\xi_i$ is a non-constant polynomial in $\mathbb{C}^m$ and $c_i,d_i\in\mathbb{C}\backslash \{0\}$, where $i\in \mathbb{Z}[1,m]$.
Solving (\ref{xx1}) for $f$ and $\partial_{z_i}(f)$, we get
\bea\label{xx3} f=\left(c_ie^{\xi_i}+d_ie^{-\xi_i}\right)/2\lambda_i\;\;\text{and}\;\;\partial_{z_i}(f)=\left(c_ie^{\xi_i}-d_ie^{-\xi_i}\right)/6,\eea
for any $i\in \mathbb{Z}[1,m]$. Now we claim that $\xi_i\pm \xi_j$ is constant for any $i,j\in\mathbb{Z}[1,m]$, $i\neq j$. If not, suppose $\xi_i\pm \xi_j$ is non-constant for $i,j\in\mathbb{Z}[1,m]$, $i\neq j$. If we take $\hat{c_i}=c_i/2\lambda_i$ and $\hat{d_i}=d_i/2\lambda_i$, then from (\ref{xx3}), we have
\bea\label{xx4} \hat{c}_ie^{2\xi_i+\xi_j}-\hat{c_j}e^{2\xi_j+\xi_i}-\hat{d_j}e^{\xi_i}+\hat{d_i}e^{\xi_j}=0,\eea 
for any $i,j\in\mathbb{Z}[1,m]$, $i\neq j$. Then using Lemma \ref{L.7} to (\ref{xx4}), we get that atleast one of $2\xi_i+\xi_j$ and $2\xi_j+\xi_i$ is a constant. Suppose $2\xi_i+\xi_j=c_{ij}\in\mathbb{C}$ for any $i,j\in\mathbb{Z}[1,m]$, $i\neq j$. Then from (\ref{xx4}) gives 
$\hat c_ie^{c_{ij}}-\hat c_je^{2c_{ij}}e^{-3\xi_i}-\hat d_je^{\xi_i}+\hat d_ie^{c_{ij}}e^{-2\xi_i}=0$, i.e.,
\beas \hat c_ie^{c_{ij}}e^{3\xi_i}-\hat c_je^{2c_{ij}}-\hat d_je^{4\xi_i}+\hat d_ie^{c_{ij}}e^{\xi_i}=0,\eeas 
which is impossible by Lemma \ref{L.3}. Similarly we get a contradiction, when $\xi_i+2\xi_j$ is a constant.
Hence $\xi_i\pm \xi_j$ is constant for any $i,j\in\mathbb{Z}[1,m]$, $i\neq j$. Then from (\ref{xx3}), we may assume that
\bea\label{xx5} f=\left(\tilde c_ie^{\xi}+\tilde d_ie^{-\xi}\right)/2\lambda_i\;\;\text{and}\;\;\partial_{z_i}(f)=\left(\tilde c_ie^{\xi}-\tilde d_ie^{-\xi}\right)/6,\eea
where $\tilde c_i, \tilde d_i\in\mathbb{C}\backslash \{0\}$ for any $i\in\mathbb{Z}[1,m]$. Clearly (\ref{xx5}) gives
\[\partial_{z_i}(f)=\partial_{z_i}(\xi)\left(\tilde c_ie^{\xi}-\tilde d_ie^{-\xi}\right)/2\lambda_i.\]

Now from (\ref{xx5}), we get
%$\tilde c_i(\lambda_i-3)e^{\xi}-\tilde d_i(\lambda_i-3\partial_{z_i}(\xi))e^{-\xi}=0$
%and so by Lemma \ref{L.3}, we get $\lambda_i=3$ and 
$\lambda_i=3\partial_{z_i}(\xi)$, $i\in\mathbb{Z}[1,m]$. Clearly from (\ref{bm}) and (\ref{bma}), we have $\xi(z)=A_0+\langle \lambda, \ol z\rangle/3$, where $A_0=\alpha(0,0,\ldots,0)$. Again from (\ref{xx5}), we have
\bea\label{xx6} (\tilde c_i-\tilde c_j)e^{\xi}-(\tilde d_i-\tilde d_j)e^{-\xi}=0,\eea 
for any $i,j\in\mathbb{Z}[1,m]$, where $i\neq j$. Then using Lemma \ref{L.3} to (\ref{xx6}), we get $\tilde c_i=\tilde c_j$ and $\tilde d_i=\tilde d_j$ for any $i,j\in\mathbb{Z}[1,m]$, $i\neq j$. Finally, we have
\bea\label{xx7} f(z)=ce^{\langle \lambda, \ol z\rangle/3}+de^{-\langle \lambda, \ol z\rangle/3},\eea
where $c, d\in\mathbb{C}\backslash \{0\}$. Substituting (\ref{xx7}) into (\ref{dt41}), we get
\beas\left(c^3-p_1\right)e^{\langle \lambda, \ol z\rangle}+\left(d^3-p_2\right)e^{-\langle \lambda, \ol z\rangle}+(3cd+m^l)\left(ce^{\langle \lambda, \ol z\rangle/3}+de^{-\langle \lambda, \ol z\rangle/3}\right)=0\eeas
and so by Lemma \ref{L.3}, we have $c^3=p_1$, $d^3=p_2$ and $3cd+m^l=0$.
Hence the proof.

\section {{\bf Proof of Theorem \ref{t4}}}
If $l$ is odd, then substituting (\ref{xx7}) into (\ref{dt41}), one can easily get a contradiction. Consequently the Eq.
(\ref{de}) has no transcendental entire solutions in $\mathbb{C}^m$.

\medskip
\medskip
%\section{{\bf Statements and declarations}}
%\vspace{1.3mm}
{\bf Statements and declarations:}

\smallskip
\noindent \textbf {Conflict of interest:} The authors declare that there are no conflicts of interest regarding the publication of this paper.

\smallskip
\noindent{\bf Funding:} There is no funding received from any organizations for this research work.

\smallskip
\noindent \textbf {Data availability statement:}  Data sharing is not applicable to this article as no database were generated or analyzed during the current study.


\begin{thebibliography}{99}
\bibitem{BCL} C. Berenstein, D. C. Chang and B. Q. Li, A note on wronskians and linear dependence of entire functions, Complex var. elliptic equ., 24 (1994), 131-144.

\bibitem{CG1} M. F. Chen and Z. S. Gao, Entire solutions of certain type of nonlinear differential equations and differential-difference
equations, J. Comput. Anal. Appl., 24 (2018), 137-147.

\bibitem{CLL} Q. Cheng, Y. Li and Z. Liu, On meromorphic solutions of nonlinear partial differential-difference equations of first order in several complex variables, Bull. Korean Math. Soc., 60 (2) (2023), 425-441.

\bibitem{GK1} Ph. Griffiths and J. King, Nevanlinna theory and holomorphic mappings between algebraic varieties, Acta Math., 130 (1973), 145-220.

\bibitem{HZ1} W. Hao and Q. Zhang, Meromorphic solutions of a class of nonlinear partial differential equations, Indian J. Pure Appl. Math., 2025: 1-11, doi.org/10.1007/s13226-025-00779-5.

\bibitem{HKL} J. Heittokangas, R. Korhonen and I. Laine, On meromorphic solutions of certain nonlinear differential equations, Bull. Austral. Math. Soc., 66 (2002), 331-343.

\bibitem{HY0} P. C. Hu and C. C. Yang, Malmquist type theorem and factorization of meromorphic solutions of partial differential equations, Complex Var., 27 (1995), 269-285.

\bibitem{HY1} P. C. Hu and C. C. Yang, Uniqueness of meromorphic functions on $\mathbb{C}^m$, Complex Var., 30 (1996), 235-270.

\bibitem{HLY} P. C. Hu, P. Li and C. C. Yang, Unicity of Meromorphic Mappings. Springer, New York (2003).

\bibitem{ps1} P. C. Hu and C. C. Yang, The Tumura-Clunie theorem in several complex variables, Bull. Aust. Math. Soc., 90 (2014), 444-456.

\bibitem{BQL} B. Q. Li, On reduction of functional-differential equations, Complex Var., 31 (1996), 311-324. 

\bibitem{LY1} N. Li and L. Z. Yang, Solutions of nonlinear difference equations, J. Math. Anal. Appl., 452 (2017), 1128-1144.

\bibitem{ps2} P. Li and C. C. Yang, On the nonexistence of entire solutions of certain type of nonlinear differential equations, J. Math. Anal. Appl., 320 (2006), 827-835.
%\bibitem{29} W. Stoll, The Ahlfors-Weyl theory of meromorphic maps on parabolic manifolds, Lecture Notes in Math. 981 (1983), 101-219.

\bibitem{LYZ1} L. W. Liao, C. C. Yang and J. J. Zhang, On meromorphic solutions of certain type of non-linear differential equations, Ann. Acad. Sci. Fenn. Math., 38 (2013), 581-593.

\bibitem{LYe1} L. W. Liao and Z. Ye, On solutions to nonhomogeneous algebraic differential equations and their application, J. Aust. Math.
Soc., 97 (2014), 391-403.

\bibitem{FL} F. L\"{u}, On meromorphic solutions of certain partial differential equations, Canadian Math. Bull., 2025:1-15, doi:10.4153/S000843952500347.

\bibitem{LLW1} X. Q. Lu, L. W. Liao and J. Wang, On meromorphic solutions of a certain type of nonlinear differential equations, Acta. Math. Sin.-English Ser., 33 (2017), 1597-1608.

\bibitem{MR} M. Ru, Nevanlinna theory and its relation to diophantine approximation, World Scientific Publishing Co., Singapore, 2001.

\bibitem{ST1} W. Stoll, Value distribution on parabolic spaces, Lecture Notes in Math., 600 (1977), Springer-Verlag.
 
\bibitem{WS} W. Stoll, Value distribution theory of meromorphic maps, Aspects of Math. E7 (1985), pp. 347, Vieweg-Verlag.

\bibitem{WS3} W. Stoll, Value distribution and the lemma of the logarithmic derivative on polydiscs, Internat. J. Math. \& Math. Sci., 6 (4) (1983), 617-669.


\bibitem{CCY1} C. C. Yang, On the entire solutions of certain class of non-linear differential equations, J. Math. Appl., 33 (1971), 644-649.

\bibitem{CCY2} C. C. Yang, On entire solutions of a certain type of nonlinear differential equations, Bull. Aust. Math. Soc., 64 (2001), 377-380.

\bibitem{cc7} C. C. Yang and P. Li, On the transcendental solutions of a certain type of nonlinear differential equations, Arch.
Math., 82 (2004), 442-448.

\bibitem{ZL} J. Zhang and L. W. Liao, On entire solutions of a certain type of nonlinear differential and difference equations,
Taiwanese J. Math., 15 (5) (2011), 2145-2157.

\end{thebibliography}
\end{document}